\newcommand\blfootnote[1]{%
	\begingroup
	\renewcommand\thefootnote{}\footnote{#1}%
	\addtocounter{footnote}{-1}%
	\endgroup
}
\newtheorem{theorem}{Theorem}[section]
\newtheorem{lemma}[theorem]{Lemma}
\newtheorem{assumption}[theorem]{Assumption}
\newtheorem{proposition}[theorem]{Proposition}
\newtheorem{definition}[theorem]{Definition}
\newtheorem{remark}[theorem]{Remark}
\title{Mild Solution of Semilinear Rough Stochastic Evolution Equations\footnotemark[1]}
\author{Jiahao Liang\footnotemark[2] \ and Shanjian Tang\footnotemark[3]} 
\begin{document}

\maketitle

\pagenumbering{arabic}

\begin{abstract}
In this paper, we investigate a semilinear stochastic parabolic equation with a linear rough term 
$du_{t}=\left[L_{t}u_{t}+f\left(t, u_{t}\right)\right]dt+\left(G_{t}u_{t}+g_{t}\right)d\mathbf{X}_{t}+h\left(t, u_{t}\right)dW_{t}$, 
where $\left(L_{t}\right)_{t \in \left[0, T\right]}$ is a family of unbounded operators 
acting on a monotone family of interpolation Hilbert spaces, 
$\mathbf{X}$ is a two-step $\alpha$-Hölder rough path with $\alpha \in \left(1/3, 1/2\right]$ 
and $W$ is a Brownian motion. 
Existence and uniqueness of the mild solution are given through 
the stochastic controlled rough path approach and fixed-point argument. 
As a technical tool to define rough stochastic convolutions, 
we also develop a general mild stochastic sewing lemma, 
which is applicable for processes according to a monotone family. 
\end{abstract}

\renewcommand{\thefootnote}{\fnsymbol{footnote}}

\blfootnote{\textit{Key words and phrases.} rough path, stochastic partial differential equation (SPDE), mild solution. }
\blfootnote{\textit{MSC2020 subject classifications}: 60H15, 60L50. }
\footnotetext[1]{This work was partially supported by National Natural Science Foundation of China (Grants No. 12031009),  Key Laboratory of Mathematics for Nonlinear Sciences (Ministry of Education), and Shanghai Key Laboratory for Contemporary Applied Mathematics, Fudan University, Shanghai 200433, China.}
\footnotetext[2]{School of Mathematical Sciences, Fudan University, Shanghai 200433, China. E-mail: jhliang20@fudan.edu.cn.}
\footnotetext[3]{Institute of Mathematical Finance and Department of Finance and Control Sciences, School of Mathematical Sciences, Fudan University, Shanghai 200433, P. R. China. Email: sjtang@fudan.edu.cn.}

\section{Introduction}
\label{Sec1}
Rough integrals $\int Yd\mathbf{X}$ and rough differential equations (RDEs) 
\begin{equation*}
	Y_{t}=Y_{0}+\int_{0}^{t} g\left(Y\right) d \mathbf{X}, \quad t \in \left[0, T\right], 
\end{equation*}
introduced by Lyons \cites{Lyo94, Lyo98} and Gubinelli \cite{Gub04}, 
for two-step $\alpha$-Hölder rough paths $\mathbf{X}=\left(X, \mathbb{X}\right)$ 
with $\alpha \in \left(\frac{1}{3}, \frac{1}{2}\right]$ and paths $Y$ controlled by $X$, 
extend the work of Young \cite{You36} and have received numerous attentions. 
On the one hand, PDEs driven by irregular paths have been well-studied. 
One of the important approaches is to study mild solutions of them. 
Semilinear Young PDEs have been studied through the mild approach by 
Gubinelli et al. \cite{GLT06} and Addona et al. \cites{ALT22, ALT24}. 
Mild solutions of semilinear rough PDEs were first studied by 
Gubinelli and Tindel \cite{GT10} in the case of polynomial nonlinearities. 
Local existence and uniqueness of mild solutions of general semilinear rough PDEs 
have been studied by Deya et al. \cite{DGT12}, Gerasimovičs and Hairer \cite{GH19}, Hesse and Neamţu \cites{HN19}
and Gerasimovičs et al. \cite{GHN21}. 
Deya et al. \cite{DGT12} and Hesse and Neamţu \cites{HN20, HesN22} give 
sufficient conditions for the existence of global mild solutions. 
Quasilinear rough PDEs have been studied by Hocquet and Neamţu \cite{HocN22}. 
On the other hand, differential equations driven simultaneously by Brownian motions 
and rough paths have also been well-studied. 
The well-posedness of (forward) rough SDEs has been established 
by Crisan et al. \cite{CDFO13}, Diehl et al. \cite{DOR15} and Friz et al. \cite{FHL21}. 
Rough BSDEs have been studied by Diehl and Friz \cite{DF12} and Liang and Tang \cite{LT23-1}.\\
\indent
In this paper, we consider the following semilinear stochastic partial differential equation with a linear rough term (rough SPDE)
\begin{equation} \label{rspdei}
	\left\{
		\begin{aligned}
			&du_{t}=\left[L_{t}u_{t}+f\left(t, u_{t}\right)\right]dt+\left(G_{t}u_{t}+g_{t}\right)d\mathbf{X}_{t}+h\left(t, u_{t}\right)dW_{t}, \quad t \in \left(0, T\right],\\
			&u_{0}=\xi,
		\end{aligned}
	\right.	
\end{equation}
where $\left(L_{t}\right)_{t \in \left[0, T\right]}$ is a family of unbounded operators 
acting on a monotone family of interpolation Hilbert spaces $\left(\mathcal{H}_{\gamma}\right)_{\gamma \in \mathbb{R}}$ 
and generating a propagator $\left(S_{s, t}\right)_{0 \leq s \leq t \leq T}$, 
$\mathbf{X}=\left(X, \mathbb{X}\right)$ is a two-step $\alpha$-Hölder rough path with $\alpha \in \left(\frac{1}{3}, \frac{1}{2}\right]$, 
$W$ is a standard Brownian motion, and coefficients $f, G, g$ and $h$ are random and time-varying. 
Such equations have been studied by Liang and Tang \cite{LT23-2} while $\mathbf{X}$ 
is a $\sigma$-Hölder continuous path with $\sigma \in \left(\frac{1}{2}, 1\right)$. 
It connects to the SPDE driven by $W$ and an independent fractional Brownian motion 
$B^{H}$ with Hurst parameter $H \in \left(\frac{1}{3}, \frac{1}{2}\right]$ 
\begin{equation*}
	\left\{
		\begin{aligned}
			&du_{t}=\left[L_{t}u_{t}+f\left(t, u_{t}\right)\right]dt+\left(G_{t}u_{t}+g_{t}\right)dB_{t}^{H}+h\left(t, u_{t}\right)dW_{t}, \quad t \in \left(0, T\right],\\
			&u_{0}=\xi.
		\end{aligned}
	\right.	
\end{equation*}
\indent
Through the propagator $S$, the rough SPDE \eqref{rspdei} can be formulated in a mild form
\begin{equation*}
	u_{t}=S_{0, t}\xi+\int_{0}^{t}S_{r, t}f\left(r, u_{r}\right)dr+\int_{0}^{t}S_{r, t}\left(G_{r}u_{r}+g_{r}\right)d\mathbf{X}_{r}+\int_{0}^{t}S_{r, t}h\left(r, u_{r}\right)dW_{r}, \quad t \in \left[0, T\right].
\end{equation*}
To solve this equation, we first need to establish rough stochastic convolutions 
\begin{equation} \label{rscidi}
	\int_{0}^{\cdot}S_{r, \cdot}Y_{r}d\mathbf{X}_{r}
\end{equation}
similar to classical rough convolutions, 
but for stochastic process pairs $\left(Y, Y^{\prime}\right)$ under certain conditions 
(called stochastic controlled rough paths according to the monotone family $\left(\mathcal{H}_{\gamma}\right)_{\gamma \in \mathbb{R}}$). 
To this end, we combine the stochastic sewing lemma 
introduced by Lê \cite[Theorem 2.1]{Le20} with the mild sewing lemma 
introduced by Gubinelli and Tindel \cite[Theorem 3.5]{GT10}, 
and develop a general mild stochastic sewing lemma. 
Through this lemma, rough stochastic convolutions \eqref{rscidi} can be well defined. 
Secondly, we need to find suitable conditions for a random operator family pair $\left(G, G^{\prime}\right)$ 
such that its composition with a stochastic controlled rough path $\left(Y, Y^{\prime}\right)$, 
i.e. $\left(GY, GY^{\prime}+G^{\prime}Y\right)$, is also a stochastic controlled rough paths. 
Such operator family pairs are called stochastic controlled operator families 
according to the monotone family $\left(\mathcal{H}_{\gamma}\right)_{\gamma \in \mathbb{R}}$. 
On these bases, by a fixed-point argument together with some estimates, 
we get the existence and uniqueness of mild solution of the rough SPDE \eqref{rspdei} under suitable conditions.
The continuity of the mild solution map and spatial regularity of the mild solution 
are also obtained. 
To our best knowledge, this is the first study to PDEs driven simultaneously by Brownian motions 
and rough paths.\\
\indent
The paper is organized as follows. 
Section \ref{Sec2} contains our fuctional analytic framework and some preliminary notations and results. 
In Section \ref{Sec3}, we introduce stochastic controlled rough paths and operator families 
according to a monotone family, and then investigate properties of convolutions and compositions of them. 
The mild stochastic sewing lemma is also developed. 
Mild solution of the rough SPDE \eqref{rspdei} is studied in Section \ref{Sec4}. 
In Section \ref{Sec5}, we provide a concrete example to illustrate our results. 

\section{Preliminaries}
\label{Sec2}
Throughout the paper, we fix $\alpha \in \left(\frac{1}{3}, \frac{1}{2}\right]$ and let 
$\beta \in \left(0, 1\right)$. 
Fixing a finite time horizon $T > 0$, 
let $\Delta_{2}:=\left\{\left(s, t\right): 0 \leq s \leq t \leq T\right\}$ 
and $\Delta_{3}:=\left\{\left(s, r, t\right): 0 \leq s \leq r \leq t \leq T\right\}$. 
For $\left(s, t\right) \in \Delta_{2}$, denote by $\mathcal{P}\left[s, t\right]$ 
the set of all partitions of the interval $\left[s, t\right]$, and by $\left|\pi\right|$ 
the mesh size of a partition $\pi \in \mathcal{P}\left[s, t\right]$. 
For Banach spaces $V$ and $\bar{V}$, define $\mathcal{L}\left(V, \bar{V}\right)$ as 
the space of bounded linear operators from $V$ to $\bar{V}$, endowed with the operator norm. 
For simplicity write $\mathcal{L}\left(V\right):=\mathcal{L}\left(V, V\right)$. 
Define $C\left(V, \bar{V}\right)$ as the space of bounded continuous maps 
from $V$ to $\bar{V}$, endowed with the maximum-norm.\\
\indent
Let $\left(\Omega, \mathcal{F}, \mathcal{F}_{t}, \mathbb{P}\right)$ 
be a filtered probability space satisfying the usual conditions and 
carrying a $d$-dimensional Brownian motion $W$. 
Denote by $\mathbb{E}_{t}$ the expectation operator conditioned at $\mathcal{F}_{t}$. 
For $m \in \left[2, \infty\right]$, a Banach space $\left(V, \left|\cdot\right|_{V}\right)$ and 
a subfield $\mathcal{G}$ of $\mathcal{F}$, 
define $L^{m}\left(\Omega, \mathcal{G}, V\right)$ as 
the space of $\mathcal{G}$-measurable $L^{m}$-integrable 
$V$-valued random variables $\xi$, endowed with the norm 
$\left\|\xi\right\|_{m, V}:=\left\|\left|\xi\right|_{V}\right\|_{m}$. 
For simplicity write $L^{m}\left(\Omega, V\right):=L^{m}\left(\Omega, \mathcal{F}, V\right)$.\\
\indent
Throughout the paper, we write $a \lesssim b$ provided there exists constant $C > 0$ such that $a \leq Cb$. 
Unless stated otherwise, the constant $C$ only depends on parameters 
such that $\alpha, \beta, m$, etc, and $T \vee 1$. 
For fuction pairs $\left(f, \bar{f}\right)$, we simply write $\Delta f:=f-\bar{f}$. 
\subsection{Functional analytic framework}
To avoid issues related to stochastic integrals, 
we work with Hilbert spaces rather than Banach spaces as in \cites{GHN21, HesN22}. 
Let $\left(\mathcal{H}_{\gamma}, \left|\cdot\right|_{\gamma}\right)_{\gamma \in \mathbb{R}}$ 
be a monotone family of interpolation Hilbert spaces, i.e. 
$\mathcal{H}_{\gamma}$ is a separable Hilbert space for $\gamma \in \mathbb{R}$ 
such that for $\gamma_{1} \leq \gamma_{2}$, 
$\mathcal{H}_{\gamma_{2}}$ is a continuously embedded, dense subspace of $\mathcal{H}_{\gamma_{1}}$, 
and the following interpolation inequality holds 
\begin{equation} \label{ii}
	\left|u\right|_{\gamma_{2}}^{\gamma_{3}-\gamma_{1}} \lesssim \left|u\right|_{\gamma_{1}}^{\gamma_{3}-\gamma_{2}}\left|u\right|_{\gamma_{3}}^{\gamma_{2}-\gamma_{1}}, \quad \forall \gamma_{1} \leq \gamma_{2} \leq \gamma_{3}, \quad \forall u \in \mathcal{H}_{\gamma_{3}}.
\end{equation}
Let $\left(L_{t}\right)_{t \in \left[0, T\right]}$ be a family of unbounded operators 
such that for every $t \in \left[0, T\right]$ and $\gamma \in \mathbb{R}$, 
$L_{t}$ is a closed densely defined linear operator on $\mathcal{H}_{\gamma}$ 
and its domain contains $\mathcal{H}_{\gamma+1}$. 
Assume $\left(L_{t}\right)_{t \in \left[0, T\right]}$ generates a propagator 
$S: \Delta_{2} \rightarrow \cap_{\gamma \in \mathbb{R}}\mathcal{L}\left(\mathcal{H}_{\gamma}\right)$ 
(see \cite[Definitions 2.7 and 2.11]{GHN21} for the precise definition). 
Indeed, $S$ is a strongly continuous operator family satisfying 
$S_{r, t}S_{s, r}=S_{s, t}$ for every $\left(s, r, t\right) \in \Delta_{3}$ 
and $S_{t, t}=id$ for every $t \in \left[0, T\right]$. 
By interpolation we have the following usefull result. 
\begin{proposition} \label{asp}
	For $\gamma_{1} \leq \gamma_{2} \leq \gamma_{1}+1$, we have 
	\begin{equation*}
		\left|S_{s, t}u\right|_{\gamma_{2}} \lesssim \left|t-s\right|^{\gamma_{1}-\gamma_{2}}\left|u\right|_{\gamma_{1}}, \quad \left|S_{s, t}u-u\right|_{\gamma_{1}} \lesssim \left|t-s\right|^{\gamma_{2}-\gamma_{1}}\left|u\right|_{\gamma_{2}}, \quad \forall 0 \leq s < t \leq T.
	\end{equation*}
\end{proposition}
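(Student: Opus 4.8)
The two inequalities in Proposition \ref{asp} are interpolation consequences of the "extreme" bounds that a propagator satisfies by definition, namely the smoothing estimate $\left|S_{s,t}u\right|_{\gamma_1+1}\lesssim\left|t-s\right|^{-1}\left|u\right|_{\gamma_1}$ together with uniform boundedness $\left|S_{s,t}u\right|_{\gamma_1}\lesssim\left|u\right|_{\gamma_1}$, and the "time-continuity at scale one" estimate $\left|S_{s,t}u-u\right|_{\gamma_1}\lesssim\left|t-s\right|\,\left|u\right|_{\gamma_1+1}$. So the first step is simply to quote, from \cite[Definitions 2.7 and 2.11]{GHN21}, exactly which of these hold for $S$ at every fixed level $\gamma$; this is what "generates a propagator" is set up to give.

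For the first inequality, fix $u\in\mathcal{H}_{\gamma_1}$ and write $v:=S_{s,t}u$. We know $\left|v\right|_{\gamma_1}\lesssim\left|u\right|_{\gamma_1}$ and $\left|v\right|_{\gamma_1+1}\lesssim\left|t-s\right|^{-1}\left|u\right|_{\gamma_1}$. Apply the interpolation inequality \eqref{ii} with the triple $\gamma_1\le\gamma_2\le\gamma_1+1$:
\begin{equation*}
	\left|v\right|_{\gamma_2}^{(\gamma_1+1)-\gamma_1}\lesssim\left|v\right|_{\gamma_1}^{(\gamma_1+1)-\gamma_2}\left|v\right|_{\gamma_1+1}^{\gamma_2-\gamma_1},
\end{equation*}
i.e. $\left|v\right|_{\gamma_2}\lesssim\left|v\right|_{\gamma_1}^{1-(\gamma_2-\gamma_1)}\left|v\right|_{\gamma_1+1}^{\gamma_2-\gamma_1}\lesssim\left|u\right|_{\gamma_1}^{1-(\gamma_2-\gamma_1)}\big(\left|t-s\right|^{-1}\left|u\right|_{\gamma_1}\big)^{\gamma_2-\gamma_1}=\left|t-s\right|^{\gamma_1-\gamma_2}\left|u\right|_{\gamma_1}$, which is exactly the claim. (When $\gamma_2=\gamma_1$ this degenerates to the trivial bound, and when $\gamma_2=\gamma_1+1$ to the smoothing estimate, so the interpolation step genuinely covers the interior.)

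For the second inequality, fix $u\in\mathcal{H}_{\gamma_2}$ with $\gamma_1\le\gamma_2\le\gamma_1+1$ and set $w:=S_{s,t}u-u$. By the trivial bound at level $\gamma_1$ applied to each term, $\left|w\right|_{\gamma_1}\lesssim\left|u\right|_{\gamma_1}\lesssim\left|u\right|_{\gamma_2}$ (using the embedding $\mathcal{H}_{\gamma_2}\hookrightarrow\mathcal{H}_{\gamma_1}$); by the time-continuity estimate at level $\gamma_1$ applied at scale one (with $u\in\mathcal{H}_{\gamma_1+1}$) one has $\left|w\right|_{\gamma_1}\lesssim\left|t-s\right|\,\left|u\right|_{\gamma_1+1}$. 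Interpolate these two bounds on the scalar quantity $\left|w\right|_{\gamma_1}$: raising the first to the power $(\gamma_1+1)-\gamma_2$, the second to the power $\gamma_2-\gamma_1$, and multiplying gives $\left|w\right|_{\gamma_1}\lesssim\left|t-s\right|^{\gamma_2-\gamma_1}\left|u\right|_{\gamma_1}^{(\gamma_1+1)-\gamma_2}\left|u\right|_{\gamma_1+1}^{\gamma_2-\gamma_1}$, and a further application of \eqref{ii} to $u$ with the triple $\gamma_1\le\gamma_2\le\gamma_1+1$ bounds $\left|u\right|_{\gamma_1}^{(\gamma_1+1)-\gamma_2}\left|u\right|_{\gamma_1+1}^{\gamma_2-\gamma_1}\lesssim\left|u\right|_{\gamma_2}$. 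Combining yields $\left|w\right|_{\gamma_1}\lesssim\left|t-s\right|^{\gamma_2-\gamma_1}\left|u\right|_{\gamma_2}$, as desired.

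**Main obstacle.** The real work is not the algebra above but making sure the "extreme" estimates I invoked are genuinely part of the hypothesis — in particular the uniform-in-$(s,t)$ smoothing bound $\left|S_{s,t}\right|_{\mathcal{L}(\mathcal{H}_{\gamma_1},\mathcal{H}_{\gamma_1+1})}\lesssim\left|t-s\right|^{-1}$ and the companion time-regularity bound at every level $\gamma_1$, with constants uniform in $\gamma$ over the relevant compact range (or at least locally uniform, which suffices since $\lesssim$ here is allowed to depend on the fixed parameters). One should double-check that \cite[Definitions 2.7 and 2.11]{GHN21} package a parabolic-type propagator with precisely these two scaling properties (as opposed to, say, only an analytic-semigroup special case); if the reference states them only for integer shifts or only at $\gamma=0$, a brief remark is needed to propagate them to all real $\gamma$ using that each $L_t$ acts compatibly across the scale $(\mathcal{H}_\gamma)_{\gamma}$. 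Once those inputs are in hand, the proposition is a two-line interpolation argument as above.
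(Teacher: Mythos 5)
The paper offers no written proof of this proposition — it simply asserts it ``by interpolation'' from the propagator axioms of \cite[Definitions 2.7 and 2.11]{GHN21} — and your argument is precisely that intended interpolation (the smoothing bound $\left|S_{s,t}u\right|_{\gamma_1+1}\lesssim\left|t-s\right|^{-1}\left|u\right|_{\gamma_1}$, uniform boundedness, and the scale-one time-continuity bound, combined via \eqref{ii}), carried out correctly. The only point worth adding is that your derivation of the second estimate assumes $u\in\mathcal{H}_{\gamma_1+1}$; it extends to all $u\in\mathcal{H}_{\gamma_2}$ by density of $\mathcal{H}_{\gamma_1+1}$ in $\mathcal{H}_{\gamma_2}$ together with the boundedness of $S_{s,t}-id$ from $\mathcal{H}_{\gamma_2}$ to $\mathcal{H}_{\gamma_1}$.
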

In the sequel of this paper, we let $\gamma, \gamma_{1}$ and $\gamma_{2}$ be any real numbers 
and $m \in \left[2, \infty\right)$. 
For simplicity we write $\mathcal{H}_{\gamma}^{e}:=\mathcal{L}\left(\mathbb{R}^{e}, \mathcal{H}_{\gamma}\right)$ 
and also denote by $\left|\cdot\right|_{\gamma}$ its norm. 
For $f \in \mathcal{L}\left(\mathcal{H}_{\gamma_{1}}^{e_{1}}, \mathcal{H}_{\gamma_{2}}^{e_{2}}\right)$, 
define 
\begin{equation*}
	\left|f\right|_{\left(\gamma_{1}, \gamma_{2}\right)\text{-}op}:=\sup_{\left|u\right|_{\gamma_{1}} \leq 1}\left|fu\right|_{\gamma_{2}}.
\end{equation*}
Write 
$\left\|\cdot\right\|_{m, \gamma}:=\left\|\cdot\right\|_{m, \mathcal{H}_{\gamma}^{e_{1}}}$ 
and $\left\|\cdot\right\|_{\infty, \left(\gamma_{1}, \gamma_{2}\right)\text{-}op}:=\left\|\cdot\right\|_{\infty, \mathcal{L}\left(\mathcal{H}_{\gamma_{1}}^{e_{1}}, \mathcal{H}_{\gamma_{2}}^{e_{2}}\right)}$. 
\subsection{Increment operators and Hölder type spaces}
For $Y: \left[0, T\right] \times \Omega \rightarrow \mathcal{H}_{\gamma}^{e_{1}}$, 
define $\delta Y, \hat{\delta}Y: \Delta_{2} \times \Omega \rightarrow \mathcal{H}_{\gamma}^{e_{1}}$ 
as the increment and mild increment of $Y$ respectively, i.e. 
\begin{equation*}
	\delta Y_{s, t}:=Y_{t}-Y_{s}, \quad \hat{\delta} Y_{s, t}:=Y_{t}-S_{s, t}Y_{s}, \quad \forall \left(s, t\right) \in \Delta_{2}.
\end{equation*}
Similarly, for $A: \Delta_{2} \times \Omega \rightarrow \mathcal{H}_{\gamma}^{e_{2}}$, 
define $\delta A, \hat{\delta}A: \Delta_{3} \times \Omega \rightarrow \mathcal{H}_{\gamma}^{e_{2}}$ by 
\begin{equation*}
	\delta A_{s, r, t}:=A_{s, t}-A_{s, r}-A_{r, t}, \quad \hat{\delta} A_{s, r, t}:=A_{s, t}-S_{r, t}A_{s, r}-A_{r, t}, \quad \forall \left(s, r, t\right) \in \Delta_{3}.
\end{equation*}
$A$ is said to be adapted if $A_{s, t}$ is $\mathcal{F}_{t}$-measurable for every $\left(s, t\right) \in \Delta_{2}$. 
Denote by $\mathbb{E}_{\cdot}A$ the process: $\left(s, t\right) \mapsto \mathbb{E}_{s}A_{s, t}, \left(s, t\right) \in \Delta$. 
Define $C_{2}^{\beta}L_{m}\mathcal{H}_{\gamma}^{e_{2}}$ as the space of processes 
$A \in C\left(\Delta_{2}, L^{m}\left(\Omega, \mathcal{H}_{\gamma}^{e_{2}}\right)\right)$ 
measurable and adapted such that 
\begin{equation*}
	\left\|A\right\|_{\beta, m, \gamma}:=\sup_{0 \leq s < t \leq T}\frac{\left\|A_{s, t}\right\|_{m, \gamma}}{|t-s|^{\beta}} < \infty. 
\end{equation*}
Note that $A \in C_{2}^{\beta}L_{m}\mathcal{H}_{\gamma}^{e_{2}}$ implies $A_{t, t}=0$ 
for every $t \in \left[0, T\right]$. 
For simplicity, write $CL_{m}\mathcal{H}_{\gamma}^{e_{1}}:=C\left(\left[0, T\right], L^{m}\left(\Omega, \mathcal{H}_{\gamma}^{e_{1}}\right)\right)$ 
and 
\begin{equation*}
	\left\|Y\right\|_{0, m, \gamma}:=\sup_{t \in \left[0, T\right]}\left\|Y_{t}\right\|_{m, \gamma}. 
\end{equation*}
Define $C^{\beta}L_{m}\mathcal{H}_{\gamma}^{e_{1}}$ (resp. $\hat{C}^{\beta}L_{m}\mathcal{H}_{\gamma}^{e_{1}}$) 
as the space of processes $Y \in CL_{m}\mathcal{H}_{\gamma}^{e_{1}}$ 
measurable and adapted such that 
$\left\|\delta Y\right\|_{\beta, m, \gamma}$ (resp. $\left\|\hat{\delta} Y\right\|_{\beta, m, \gamma}$) is finite. 
Then define $E^{\beta}L_{m}\mathcal{H}_{\gamma}^{e_{1}}:=CL_{m}\mathcal{H}_{\gamma}^{e_{1}} \cap C^{\beta}L_{m}\mathcal{H}_{\gamma-\beta}^{e_{1}}$, 
endowed with the norm 
\begin{equation*}
	\left\|Y\right\|_{E^{\beta}L_{m}\mathcal{H}_{\gamma}}:=\left\|Y\right\|_{0, m, \gamma}+\left\|\delta Y\right\|_{\beta, m, \gamma-\beta}.
\end{equation*}
\begin{proposition} \label{in}
	For $\theta \in \left(0, \beta\right)$, we have 
	$E^{\beta}L_{m}\mathcal{H}_{\gamma}^{e_{1}} \subset E^{\theta}L_{m}\mathcal{H}_{\gamma}^{e_{1}}$ and 
	\begin{equation} \label{ini}
		\left\|Y\right\|_{E^{\theta}L_{m}\mathcal{H}_{\gamma}} \lesssim \left\|Y\right\|_{E^{\beta}L_{m}\mathcal{H}_{\gamma}}. 
	\end{equation}
\end{proposition}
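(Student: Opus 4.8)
The plan is to derive everything from the interpolation inequality \eqref{ii}, together with H\"older's and Young's inequalities. Since the norms $\|\cdot\|_{E^{\theta}L_{m}\mathcal{H}_{\gamma}}$ and $\|\cdot\|_{E^{\beta}L_{m}\mathcal{H}_{\gamma}}$ contain the common term $\|Y\|_{0,m,\gamma}$, it is enough to bound $\|\delta Y\|_{\theta,m,\gamma-\theta}$ by $\|Y\|_{E^{\beta}L_{m}\mathcal{H}_{\gamma}}$ and to check that a $Y$ in $E^{\beta}L_{m}\mathcal{H}_{\gamma}^{e_{1}}$ indeed belongs to $E^{\theta}L_{m}\mathcal{H}_{\gamma}^{e_{1}}$.

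First I would note that \eqref{ii} extends from $\mathcal{H}_{\gamma}$ to $\mathcal{H}_{\gamma}^{e_{1}}=\mathcal{L}(\mathbb{R}^{e_{1}},\mathcal{H}_{\gamma})$: for $f\in\mathcal{H}_{\gamma_{3}}^{e_{1}}$ and $\gamma_{1}\le\gamma_{2}\le\gamma_{3}$ one applies \eqref{ii} to $fv\in\mathcal{H}_{\gamma_{3}}$ for each $v\in\mathbb{R}^{e_{1}}$ with $|v|\le1$, bounds $|fv|_{\gamma_{i}}\le|f|_{\gamma_{i}}$, and takes the supremum over such $v$. Applied with $\gamma_{1}=\gamma-\beta$, $\gamma_{2}=\gamma-\theta$, $\gamma_{3}=\gamma$ (admissible since $0<\theta<\beta$), this gives, pointwise in $\omega$ with $u=\delta Y_{s,t}(\omega)$,
\[
|u|_{\gamma-\theta}\lesssim|u|_{\gamma-\beta}^{\theta/\beta}\,|u|_{\gamma}^{(\beta-\theta)/\beta}.
\]

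Next, raising to the $m$-th power, taking expectations, and using H\"older's inequality with the conjugate exponents $\beta/\theta$ and $\beta/(\beta-\theta)$, I obtain
\[
\|\delta Y_{s,t}\|_{m,\gamma-\theta}\lesssim\|\delta Y_{s,t}\|_{m,\gamma-\beta}^{\theta/\beta}\,\|\delta Y_{s,t}\|_{m,\gamma}^{(\beta-\theta)/\beta}.
\]
Bounding $\|\delta Y_{s,t}\|_{m,\gamma-\beta}\le|t-s|^{\beta}\|\delta Y\|_{\beta,m,\gamma-\beta}$ and $\|\delta Y_{s,t}\|_{m,\gamma}\le\|Y_{t}\|_{m,\gamma}+\|Y_{s}\|_{m,\gamma}\le2\|Y\|_{0,m,\gamma}$, the temporal factor becomes $|t-s|^{\beta\cdot\theta/\beta}=|t-s|^{\theta}$; dividing by $|t-s|^{\theta}$ and taking the supremum over $(s,t)\in\Delta_{2}$ yields
\[
\|\delta Y\|_{\theta,m,\gamma-\theta}\lesssim\|\delta Y\|_{\beta,m,\gamma-\beta}^{\theta/\beta}\,\|Y\|_{0,m,\gamma}^{(\beta-\theta)/\beta}.
\]
Finally Young's inequality $a^{\theta/\beta}b^{(\beta-\theta)/\beta}\le a+b$ for $a,b\ge0$ turns the right-hand side into $\|\delta Y\|_{\beta,m,\gamma-\beta}+\|Y\|_{0,m,\gamma}=\|Y\|_{E^{\beta}L_{m}\mathcal{H}_{\gamma}}$, which is \eqref{ini}. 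In particular $\|\delta Y\|_{\theta,m,\gamma-\theta}<\infty$; since $\mathcal{H}_{\gamma}$ embeds continuously in $\mathcal{H}_{\gamma-\theta}$, the regularity $Y\in CL_{m}\mathcal{H}_{\gamma}^{e_{1}}$, the measurability and the adaptedness all pass to $CL_{m}\mathcal{H}_{\gamma-\theta}^{e_{1}}$, so $Y\in C^{\theta}L_{m}\mathcal{H}_{\gamma-\theta}^{e_{1}}$ and hence $Y\in E^{\theta}L_{m}\mathcal{H}_{\gamma}^{e_{1}}$, giving the claimed inclusion.

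There is no real obstacle here; the only point needing care is aligning the interpolation exponents with the H\"older exponents so that the factor $|t-s|^{\beta}$ is raised precisely to the power $\theta/\beta$ and reproduces $|t-s|^{\theta}$, and applying \eqref{ii} in the correct order $\gamma-\beta\le\gamma-\theta\le\gamma$.
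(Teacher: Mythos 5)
Your proof is correct and follows essentially the same route as the paper: interpolation \eqref{ii} with exponents $\gamma-\beta\le\gamma-\theta\le\gamma$, H\"older's inequality in $\omega$ with conjugate exponents $\beta/\theta$ and $\beta/(\beta-\theta)$, and the bounds $\|\delta Y_{s,t}\|_{m,\gamma-\beta}\le|t-s|^{\beta}\|\delta Y\|_{\beta,m,\gamma-\beta}$, $\|\delta Y_{s,t}\|_{m,\gamma}\lesssim\|Y\|_{0,m,\gamma}$. The only (harmless) additions are the explicit extension of \eqref{ii} to $\mathcal{H}_{\gamma}^{e_{1}}$ and the final Young-inequality step, both of which the paper leaves implicit.
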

\begin{proof}
	For every $Y \in E^{\beta}L_{m}\mathcal{H}_{\gamma}^{e_{1}}$, in view of \eqref{ii} 
	and applying Hölder's inequality, we have 
	\begin{align*}
		\left\|\delta Y_{s, t}\right\|_{m, \gamma-\theta} &\lesssim \left(\mathbb{E}\left|\delta Y_{s, t}\right|_{\gamma-\beta}^{\frac{\theta m}{\beta}}\left|\delta Y_{s, t}\right|_{\gamma}^{\frac{\left(\beta-\theta\right)m}{\beta}}\right)^{\frac{1}{m}} \lesssim \left\|\delta Y_{s, t}\right\|_{m, \gamma-\beta}^{\frac{\theta}{\beta}}\left\|\delta Y_{s, t}\right\|_{m, \gamma}^{\frac{\beta-\theta}{\beta}}\\
		&\lesssim \left|t-s\right|^{\theta}\left\|\delta Y\right\|_{\beta, m, \gamma-\beta}^{\frac{\theta}{\beta}}\left\|Y\right\|_{0, m, \gamma}^{\frac{\beta-\theta}{\beta}} \lesssim \left|t-s\right|^{\theta}\left\|Y\right\|_{E^{\beta}L_{m}\mathcal{H}_{\gamma}}, \quad \forall \left(s, t\right) \in \Delta_{2},
	\end{align*}
	which gives $Y \in C^{\theta}L_{m}\mathcal{H}_{\gamma-\theta}^{e_{1}}$ and 
	\begin{equation*}
		\left\|\delta Y\right\|_{\theta, m, \gamma-\theta} \lesssim \left\|Y\right\|_{E^{\beta}L_{m}\mathcal{H}_{\gamma}}.
	\end{equation*}
	Hence, $Y \in E^{\theta}L_{m}\mathcal{H}_{\gamma}^{e_{1}}$ and the estimate \eqref{ini} holds. 
\end{proof}
The following result can be found in \cite[Proposition 2.2]{LT23-2}.
\begin{proposition} \label{ne}
	$E^{\beta}L_{m}\mathcal{H}_{\gamma}^{e_{1}}=CL_{m}\mathcal{H}_{\gamma}^{e_{1}} \cap \hat{C}^{\beta}L_{m}\mathcal{H}_{\gamma-\beta}^{e_{1}}$ 
	and we have 
	\begin{equation*}
		\left\|Y\right\|_{E^{\beta}L_{m}\mathcal{H}_{\gamma}} \lesssim \left\|Y\right\|_{0, m, \gamma}+\left\|\hat{\delta} Y\right\|_{\beta, m, \gamma-\beta} \lesssim \left\|Y\right\|_{E^{\beta}L_{m}\mathcal{H}_{\gamma}}. 
	\end{equation*}
\end{proposition}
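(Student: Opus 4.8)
The plan is to reduce everything to the elementary identity
\[
\hat{\delta}Y_{s,t} = Y_t - S_{s,t}Y_s = \delta Y_{s,t} - \left(S_{s,t}-\mathrm{id}\right)Y_s, \qquad \left(s,t\right) \in \Delta_{2},
\]
which exhibits the discrepancy between the ordinary increment $\delta Y$ and the mild increment $\hat{\delta}Y$ as the ``smoothing defect'' $\left(S_{s,t}-\mathrm{id}\right)Y_s$, a quantity tailor-made for Proposition~\ref{asp}. Both spaces in the claimed identity carry the same continuity, measurability and adaptedness conditions (all contained in $CL_m\mathcal{H}_\gamma^{e_1}$; note continuity in $\mathcal{H}_\gamma$ entails continuity in $\mathcal{H}_{\gamma-\beta}$ by the embedding), so the set equality reduces to showing that for such $Y$ the seminorm $\left\|\delta Y\right\|_{\beta,m,\gamma-\beta}$ is finite iff $\left\|\hat{\delta}Y\right\|_{\beta,m,\gamma-\beta}$ is; and the two-sided norm estimate will fall out of the very same bounds.

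The single computation required is the control of the smoothing defect. Applying the second inequality of Proposition~\ref{asp} with $\gamma_1 = \gamma-\beta$ and $\gamma_2 = \gamma$ — legitimate since $\beta \in (0,1)$ gives $\gamma_1 \le \gamma_2 \le \gamma_1+1$ — and taking $L^m(\Omega)$-norms gives
\[
\left\|\left(S_{s,t}-\mathrm{id}\right)Y_s\right\|_{m,\gamma-\beta} \lesssim |t-s|^{\beta}\left\|Y_s\right\|_{m,\gamma} \le |t-s|^{\beta}\left\|Y\right\|_{0,m,\gamma}, \qquad \left(s,t\right) \in \Delta_2.
\]
Substituting this into the identity, read first as $\hat{\delta}Y_{s,t} = \delta Y_{s,t} - (S_{s,t}-\mathrm{id})Y_s$ and then as $\delta Y_{s,t} = \hat{\delta}Y_{s,t} + (S_{s,t}-\mathrm{id})Y_s$, then dividing by $|t-s|^{\beta}$ and taking the supremum over $\Delta_2$ (the diagonal being vacuous, since both increments vanish there), produces
\[
\left\|\hat{\delta}Y\right\|_{\beta,m,\gamma-\beta} \lesssim \left\|Y\right\|_{0,m,\gamma} + \left\|\delta Y\right\|_{\beta,m,\gamma-\beta}, \qquad \left\|\delta Y\right\|_{\beta,m,\gamma-\beta} \lesssim \left\|Y\right\|_{0,m,\gamma} + \left\|\hat{\delta}Y\right\|_{\beta,m,\gamma-\beta}.
\]

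These two bounds immediately give the finiteness equivalence of the seminorms, hence the set identity $E^{\beta}L_m\mathcal{H}_\gamma^{e_1} = CL_m\mathcal{H}_\gamma^{e_1} \cap \hat{C}^{\beta}L_m\mathcal{H}_{\gamma-\beta}^{e_1}$; and adding $\left\|Y\right\|_{0,m,\gamma}$ to each side, together with the definition $\left\|Y\right\|_{E^{\beta}L_m\mathcal{H}_\gamma} = \left\|Y\right\|_{0,m,\gamma} + \left\|\delta Y\right\|_{\beta,m,\gamma-\beta}$, yields the chain $\left\|Y\right\|_{E^{\beta}L_m\mathcal{H}_\gamma} \lesssim \left\|Y\right\|_{0,m,\gamma} + \left\|\hat{\delta}Y\right\|_{\beta,m,\gamma-\beta} \lesssim \left\|Y\right\|_{E^{\beta}L_m\mathcal{H}_\gamma}$. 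I anticipate no genuine obstacle: the only points needing attention are that the index constraint of Proposition~\ref{asp} is exactly where $\beta \le 1$ enters, and that the smoothing estimate, stated for $\mathcal{H}_\gamma$, carries over verbatim to $\mathcal{H}_\gamma^{e_1}=\mathcal{L}(\mathbb{R}^{e_1},\mathcal{H}_\gamma)$ because $S_{s,t}$ acts there by post-composition.
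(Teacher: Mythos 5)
Your proof is correct: the decomposition $\hat{\delta}Y_{s,t}=\delta Y_{s,t}-\left(S_{s,t}-\mathrm{id}\right)Y_{s}$ combined with the smoothing estimate $\left\|\left(S_{s,t}-\mathrm{id}\right)Y_{s}\right\|_{m,\gamma-\beta}\lesssim\left|t-s\right|^{\beta}\left\|Y\right\|_{0,m,\gamma}$ from Proposition~\ref{asp} is exactly the intended argument (the paper itself only cites \cite{LT23-2} for this, and uses the very same decomposition in its proof of Proposition~\ref{de}). No gaps.
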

Similarly, define 
$C_{2}^{\beta}L_{\infty}\mathcal{L}\left(\mathcal{H}_{\gamma_{1}}^{e_{1}}, \mathcal{H}_{\gamma_{2}}^{e_{2}}\right)$ as the space of measurable adapted processes 
$g: \Delta_{2} \times \Omega \rightarrow \mathcal{L}\left(\mathcal{H}_{\gamma_{1}}^{e_{1}}, \mathcal{H}_{\gamma_{2}}^{e_{2}}\right)$ such that 
$g \in C\left(\Delta_{2}, L^{\infty}\left(\Omega, \mathcal{L}\left(\mathcal{H}_{\gamma_{1}}^{e_{1}}, \mathcal{H}_{\gamma_{2}}^{e_{2}}\right)\right)\right)$ 
and 
\begin{equation*}
	\left\|g\right\|_{\beta, \infty, \left(\gamma_{1}, \gamma_{2}\right)\text{-}op}:=\sup_{0 \leq s < t \leq T}\frac{\left\|g_{s, t}\right\|_{\infty, \left(\gamma_{1}, \gamma_{2}\right)\text{-}op}}{|t-s|^{\beta}} < \infty.
\end{equation*}
Write $CL_{\infty}\mathcal{L}\left(\mathcal{H}_{\gamma_{1}}^{e_{1}}, \mathcal{H}_{\gamma_{2}}^{e_{2}}\right):=C\left(\left[0, T\right], L^{\infty}\left(\Omega, \mathcal{L}\left(\mathcal{H}_{\gamma_{1}}^{e_{1}}, \mathcal{H}_{\gamma_{2}}^{e_{2}}\right)\right)\right)$ 
and 
\begin{equation*}
	\left\|f\right\|_{0, \infty, \left(\gamma_{1}, \gamma_{2}\right)\text{-}op}:=\sup_{t \in \left[0, T\right]}\left\|f_{t}\right\|_{\infty, \left(\gamma_{1}, \gamma_{2}\right)\text{-}op}.
\end{equation*}
Define $C^{\beta}L_{\infty}\mathcal{L}\left(\mathcal{H}_{\gamma_{1}}^{e_{1}}, \mathcal{H}_{\gamma_{2}}^{e_{2}}\right)$ 
as the space of processes $f \in CL_{\infty}\mathcal{L}\left(\mathcal{H}_{\gamma_{1}}^{e_{1}}, \mathcal{H}_{\gamma_{2}}^{e_{2}}\right)$ 
measurable and adapted such that 
$\left\|\delta f\right\|_{\beta, m, \left(\gamma_{1}, \gamma_{2}\right)\text{-}op}$ is finite. 
Then define $E^{\beta}L_{\infty}\mathcal{L}_{\gamma_{1}, \gamma_{2}}\left(\mathcal{H}^{e_{1}}, \mathcal{H}^{e_{2}}\right):=CL_{\infty}\mathcal{L}\left(\mathcal{H}_{\gamma_{1}}^{e_{1}}, \mathcal{H}_{\gamma_{2}}^{e_{2}}\right) \cap C^{\beta}L_{\infty}\mathcal{L}\left(\mathcal{H}_{\gamma_{1}-\beta}^{e_{1}}, \mathcal{H}_{\gamma_{2}-\beta}^{e_{2}}\right)$, 
endowed with the norm 
\begin{equation*}
	\left\|f\right\|_{E^{\beta}L_{\infty}\mathcal{L}_{\gamma_{1}, \gamma_{2}}}:=\left\|f\right\|_{0, \infty, \left(\gamma_{1}, \gamma_{2}\right)\text{-}op}+\left\|f\right\|_{0, \infty, \left(\gamma_{1}-\beta, \gamma_{2}-\beta\right)\text{-}op}+\left\|\delta f\right\|_{\beta, \infty, \left(\gamma_{1}-\beta, \gamma_{2}-\beta\right)\text{-}op}.
\end{equation*}
In general, 
$f \in E^{\beta}L_{\infty}\mathcal{L}_{\gamma_{1}, \gamma_{2}}\left(\mathcal{H}^{e_{1}}, \mathcal{H}^{e_{2}}\right)$ 
cannot imply $f \in E^{\theta}L_{\infty}\mathcal{L}_{\gamma_{1}, \gamma_{2}}\left(\mathcal{H}^{e_{1}}, \mathcal{H}^{e_{2}}\right)$ 
for $\theta \in \left(0, \beta\right)$, since $f$ may not take values in 
$\mathcal{L}\left(\mathcal{H}_{\gamma_{1}-\theta}^{e_{1}}, \mathcal{H}_{\gamma_{2}-\theta}^{e_{2}}\right)$. 
The following result can be found in \cite[Proposition 4.3]{LT23-2}. 
\begin{proposition} \label{lcp}
	Let $Y \in E^{\beta}L_{m}\mathcal{H}_{\gamma_{1}}^{e_{1}}$ 
	and $f \in E^{\beta}L_{\infty}\mathcal{L}_{\gamma_{1}, \gamma_{2}}\left(\mathcal{H}^{e_{1}}, \mathcal{H}^{e_{2}}\right)$. 
	Then we have $fY \in E^{\beta}L_{m}\mathcal{H}_{\gamma_{2}}^{e_{2}}$ and 
	\begin{equation*}
		\left\|fY\right\|_{E^{\beta}L_{m}\mathcal{H}_{\gamma_{2}}} \lesssim \left\|f\right\|_{E^{\beta}L_{\infty}\mathcal{L}_{\gamma_{1}, \gamma_{2}}}\left\|Y\right\|_{E^{\beta}L_{m}\mathcal{H}_{\gamma_{1}}}.
	\end{equation*}
\end{proposition}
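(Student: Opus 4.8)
The plan is to prove both the membership $fY \in E^{\beta}L_{m}\mathcal{H}_{\gamma_{2}}^{e_{2}}$ and the estimate by a direct bilinear bookkeeping, splitting the target norm as $\|fY\|_{E^{\beta}L_{m}\mathcal{H}_{\gamma_{2}}} = \|fY\|_{0,m,\gamma_{2}} + \|\delta(fY)\|_{\beta,m,\gamma_{2}-\beta}$ and bounding each summand separately. Measurability and adaptedness of $t \mapsto f_{t}Y_{t}$ are inherited from those of $f$ and $Y$, so the only substantive work is the two quantitative bounds.

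First I would treat the $CL_{m}$-part. Since $f_{t}$ is almost surely a bounded operator, the pathwise bound $|f_{t}Y_{t}|_{\gamma_{2}} \le |f_{t}|_{(\gamma_{1},\gamma_{2})\text{-}op}|Y_{t}|_{\gamma_{1}}$ holds, and passing to $L^{m}(\Omega)$-norms while estimating $|f_{t}|_{(\gamma_{1},\gamma_{2})\text{-}op}$ by $\|f_{t}\|_{\infty,(\gamma_{1},\gamma_{2})\text{-}op}$ gives $\|f_{t}Y_{t}\|_{m,\gamma_{2}} \le \|f\|_{0,\infty,(\gamma_{1},\gamma_{2})\text{-}op}\|Y\|_{0,m,\gamma_{1}}$ uniformly in $t$; applying the same factorization to the two pieces of $f_{t}Y_{t} - f_{s}Y_{s} = f_{t}(Y_{t} - Y_{s}) + (f_{t} - f_{s})Y_{s}$, together with the $L^{\infty}$-continuity of $t \mapsto f_{t}$ and the $L^{m}$-continuity of $t \mapsto Y_{t}$, yields $fY \in CL_{m}\mathcal{H}_{\gamma_{2}}^{e_{2}}$. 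For the Hölder part I would use the increment Leibniz rule $\delta(fY)_{s,t} = f_{t}\,\delta Y_{s,t} + \delta f_{s,t}\,Y_{s}$. On the first term, factor $|f_{t}\,\delta Y_{s,t}|_{\gamma_{2}-\beta} \le |f_{t}|_{(\gamma_{1}-\beta,\gamma_{2}-\beta)\text{-}op}|\delta Y_{s,t}|_{\gamma_{1}-\beta}$ and take $L^{m}$-norms to obtain a bound by $\|f\|_{0,\infty,(\gamma_{1}-\beta,\gamma_{2}-\beta)\text{-}op}\|\delta Y\|_{\beta,m,\gamma_{1}-\beta}|t-s|^{\beta}$. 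On the second term, factor $|\delta f_{s,t}\,Y_{s}|_{\gamma_{2}-\beta} \le |\delta f_{s,t}|_{(\gamma_{1}-\beta,\gamma_{2}-\beta)\text{-}op}|Y_{s}|_{\gamma_{1}-\beta}$, use the continuous embedding $\mathcal{H}_{\gamma_{1}} \hookrightarrow \mathcal{H}_{\gamma_{1}-\beta}$ (monotonicity of the family) to replace $|Y_{s}|_{\gamma_{1}-\beta}$ by $\lesssim |Y_{s}|_{\gamma_{1}}$, and take $L^{m}$-norms to obtain a bound by $\|\delta f\|_{\beta,\infty,(\gamma_{1}-\beta,\gamma_{2}-\beta)\text{-}op}\|Y\|_{0,m,\gamma_{1}}|t-s|^{\beta}$. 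Summing the two, dividing by $|t-s|^{\beta}$, and enlarging each factor to the full $E^{\beta}$-norm gives $\|\delta(fY)\|_{\beta,m,\gamma_{2}-\beta} \lesssim \|f\|_{E^{\beta}L_{\infty}\mathcal{L}_{\gamma_{1},\gamma_{2}}}\|Y\|_{E^{\beta}L_{m}\mathcal{H}_{\gamma_{1}}}$; combined with the first bound this is the claim, and in particular $fY \in C^{\beta}L_{m}\mathcal{H}_{\gamma_{2}-\beta}^{e_{2}}$, so $fY \in E^{\beta}L_{m}\mathcal{H}_{\gamma_{2}}^{e_{2}}$.

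I do not expect a genuine obstacle here; this is a routine product estimate. The points that require attention are bookkeeping ones: pulling the a.s.-bounded operator $f_{t}$ out of the $L^{m}(\Omega)$-norm pathwise, matching the two operator norms of $f$ appearing in $\|f\|_{E^{\beta}L_{\infty}\mathcal{L}_{\gamma_{1},\gamma_{2}}}$ — at smoothness levels $(\gamma_{1},\gamma_{2})$ and $(\gamma_{1}-\beta,\gamma_{2}-\beta)$ — with the correctly indexed pieces of $Y$, and invoking the embedding $\mathcal{H}_{\gamma_{1}} \hookrightarrow \mathcal{H}_{\gamma_{1}-\beta}$ exactly once to absorb the $\beta$-derivative shift carried by the definition of $\|f\|_{E^{\beta}L_{\infty}\mathcal{L}_{\gamma_{1},\gamma_{2}}}$. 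One could equivalently run the argument with the mild increment $\hat{\delta}$ together with Proposition \ref{ne}, but using the plain increment $\delta$ keeps the Leibniz identity linear and is cleaner.
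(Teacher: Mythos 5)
Your argument is correct: the Leibniz splitting $\delta(fY)_{s,t}=f_{t}\,\delta Y_{s,t}+\delta f_{s,t}\,Y_{s}$, the pairing of $\left\|f\right\|_{0,\infty,\left(\gamma_{1}-\beta,\gamma_{2}-\beta\right)\text{-}op}$ with $\left\|\delta Y\right\|_{\beta,m,\gamma_{1}-\beta}$ and of $\left\|\delta f\right\|_{\beta,\infty,\left(\gamma_{1}-\beta,\gamma_{2}-\beta\right)\text{-}op}$ with $\left\|Y\right\|_{0,m,\gamma_{1}}$ via the embedding $\mathcal{H}_{\gamma_{1}}\hookrightarrow\mathcal{H}_{\gamma_{1}-\beta}$, and the H\"older-type step pulling the essentially bounded operator norm out of the $L^{m}(\Omega)$-norm are all exactly what is needed, and every factor you use is controlled by the $E^{\beta}L_{\infty}\mathcal{L}_{\gamma_{1},\gamma_{2}}$-norm as defined in the paper. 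The paper does not prove this proposition itself but cites \cite[Proposition 4.3]{LT23-2}; your proof is the standard product estimate that result rests on, so there is no substantive divergence to report.
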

\subsection{Rough paths}
For $X: \left[0, T\right] \rightarrow \mathbb{R}^{e_{1}}$ and $A: \Delta_{2} \rightarrow \mathbb{R}^{e_{2}}$, 
we can define $\delta X$ and $\delta A$ in a similar way as the increment of $X$ and $A$, respectively. 
Define $C^{\beta}\left(\left[0, T\right], \mathbb{R}^{e_{1}}\right)$ (resp. $C_{2}^{\beta}\left(\left[0, T\right], \mathbb{R}^{e_{2}}\right)$) 
as the space of continuous fuctions $X: \left[0, T\right] \rightarrow \mathbb{R}^{e_{1}}$ 
(resp. $A: \Delta_{2} \rightarrow \mathbb{R}^{e_{2}}$) such that 
$\left|\delta X\right|_{\beta}$ (resp. $\left|A\right|_{\beta}$) is finite, where 
\begin{equation*}
	\left|A\right|_{\beta}:=\sup_{0 \leq s < t \leq T}\frac{\left|A_{s, t}\right|}{|t-s|^{\beta}}. 
\end{equation*}
\begin{definition}
	We call $\mathbf{X}=\left(X, \mathbb{X}\right)$ a two-step $\alpha$-Hölder rough path 
	over $\mathbb{R}^{e}$, denoted by 
	$\mathbf{X} \in \mathscr{C}^{\alpha}\left([0, T], \mathbb{R}^{e}\right)$, 
	if $X \in C^{\alpha}\left(\left[0, T\right], \mathbb{R}^{e}\right)$ and 
	$\mathbb{X} \in C_{2}^{2\alpha}\left(\left[0, T\right], \mathbb{R}^{e \times e}\right)$ 
	satisfy the Chen's relation 
    \begin{equation} \label{Chen}
      \delta \mathbb{X}_{s, r, t}=\delta X_{s, r} \otimes \delta X_{r, t}, \quad \forall \left(s, r, t\right) \in \Delta_{2}.
    \end{equation}
\end{definition}
The rough path space $\mathscr{C}^{\alpha}$ is equipped 
with the pseudometric 
\begin{equation*}
	\rho_{\alpha}\left(\mathbf{X}, \bar{\mathbf{X}}\right)=\left|\mathbf{X}-\bar{\mathbf{X}}\right|_{\alpha}:=\left|\delta X-\delta \bar{X}\right|_{\alpha}+\left|\mathbb{X}-\bar{\mathbb{X}}\right|_{2\alpha}. 
\end{equation*}
Since the initial value $X_{0}$ of $X$ has no effect on integrals against $\mathbf{X}$, 
we always assume without loss of generality that $X_{0}=0$. 
Then $\rho_{\alpha}$ is a true metric on the subspace 
$\left\{\mathbf{X} \in \mathscr{C}^{\alpha}: X_{0}=0\right\}$. 

\section{Convolution and composition}
\label{Sec3}
In this section, we build a framework for studying the rough stochastic evolution equation \eqref{rspdei}. 
To this end, we will introduce stochastic controlled rough paths 
according to the monotone family $\left(\mathcal{H}_{\gamma}\right)_{\gamma \in \mathbb{R}}$, 
and establish rough stochastic convolutions of them 
against rough paths $\mathbf{X} \in \mathscr{C}^{\alpha}$. 
We will also introduce stochastic controlled operator families according to 
$\left(\mathcal{H}_{\gamma}\right)_{\gamma \in \mathbb{R}}$ 
and investigate their compositions with stochastic controlled rough paths. 
Unless stated otherwise, we fix $0 < \beta^{\prime} \leq \beta \leq \alpha$. 
\subsection{Stochastic controlled rough paths according to a monotone family}
We first introduce the notion of stochastic controlled rough paths according to a monotone family, 
which combines stochastic controlled rough paths introduced in \cite[Definitions 3.2 and 3.4]{FHL21} 
and controlled paths according to a monotone family introduced in \cite[Definition 4.3]{GHN21}. 
\begin{definition} \label{scrp}
	Given $X \in C^{\alpha}\left(\left[0, T\right], \mathbb{R}^{e}\right)$, 
	we call $\left(Y, Y^{\prime}\right)$ an $L^{m}$-integrable 
	$\left(\beta, \beta^{\prime}\right)$-Hölder 
	stochastic controlled rough path (controlled by $X$) according to $\mathcal{H}_{\gamma}^{e_{1}}$, 
	denoted by $\left(Y, Y^{\prime}\right) \in \mathbf{D}_{X}^{\beta, \beta^{\prime}}L_{m}\mathcal{H}_{\gamma}^{e_{1}}$, 
	if $Y \in E^{\beta}L_{m}\mathcal{H}_{\gamma}^{e_{1}}$, 
	$Y^{\prime} \in E^{\beta^{\prime}}L_{m}\mathcal{H}_{\gamma-\beta}^{e_{1} \times e}$ and 
	$\mathbb{E}_{\cdot}R^{Y} \in C_{2}^{\beta+\beta^{\prime}}L_{m}\mathcal{H}_{\gamma-\beta-\beta^{\prime}}^{e_{1}}$, 
	where the remainder $R^{Y}$ is defined by 
	\begin{equation*}
		R_{s, t}^{Y}:=\delta Y_{s, t}-Y_{s}^{\prime}\delta X_{s, t}, \quad \forall \left(s, t\right) \in \Delta_{2}.
	\end{equation*}
	Moreover, we write $\left(Y, Y^{\prime}\right) \in \mathcal{D}_{X}^{\beta, \beta^{\prime}}L_{m}\mathcal{H}_{\gamma}^{e_{1}}$ 
	if additionally $R^{Y} \in C_{2}^{\beta+\beta^{\prime}}L_{m}\mathcal{H}_{\gamma-\beta-\beta^{\prime}}^{e_{1}}$. 
\end{definition}
Equipped with the norm 
\begin{equation*}
	\left\|Y, Y^{\prime}\right\|_{\mathbf{D}_{X}^{\beta, \beta^{\prime}}L_{m}\mathcal{H}_{\gamma}}:=\left\|Y\right\|_{E^{\beta}L_{m}\mathcal{H}_{\gamma}}+\left\|Y^{\prime}\right\|_{E^{\beta^{\prime}}L_{m}\mathcal{H}_{\gamma-\beta}}+\left\|\mathbb{E}_{\cdot}R^{Y}\right\|_{\beta+\beta^{\prime}, m, \gamma-\beta-\beta^{\prime}},
\end{equation*}
$\mathbf{D}_{X}^{\beta, \beta^{\prime}}L_{m}\mathcal{H}_{\gamma}^{e_{1}}$ is a Banach space 
and $\mathcal{D}_{X}^{\beta, \beta^{\prime}}L_{m}\mathcal{H}_{\gamma}^{e_{1}}$ is its Banach subspace. 
To indicate the underlying time inteval $\left[0, T\right]$, 
we use notations $\mathbf{D}_{X}^{\beta, \beta^{\prime}}L_{m}\mathcal{H}_{\gamma}^{e_{1}}\left[0, T\right]$ and 
$\left\|\cdot, \cdot\right\|_{\mathbf{D}_{X}^{\beta, \beta^{\prime}}L_{m}\mathcal{H}_{\gamma}\left[0, T\right]}$. 
The following result gives an equivalent definition of stochastic controlled rough paths according to a monotone family. 
\begin{proposition} \label{de}
	$\left(Y, Y^{\prime}\right) \in \mathbf{D}_{X}^{\beta, \beta^{\prime}}L_{m}\mathcal{H}_{\gamma}^{e_{1}}$ 
	if and only if $Y \in E^{\beta}L_{m}\mathcal{H}_{\gamma}^{e_{1}}$, 
	$Y^{\prime} \in E^{\beta^{\prime}}L_{m}\mathcal{H}_{\gamma-\beta}^{e_{1} \times e}$ and 
	$\mathbb{E}_{\cdot}\hat{R}^{Y} \in C_{2}^{\beta+\beta^{\prime}}L_{m}\mathcal{H}_{\gamma-\beta-\beta^{\prime}}^{e_{1}}$, 
	where the mild remainder $\hat{R}^{Y}$ is defined by 
	\begin{equation*}
		\hat{R}_{s, t}^{Y}:=\hat{\delta} Y_{s, t}-S_{s, t}Y_{s}^{\prime}\delta X_{s, t}, \quad \forall \left(s, t\right) \in \Delta_{2}.
	\end{equation*}
	Similarly, $\left(Y, Y^{\prime}\right) \in \mathcal{D}_{X}^{\beta, \beta^{\prime}}L_{m}\mathcal{H}_{\gamma}^{e_{1}}$ 
	if and only if $Y \in E^{\beta}L_{m}\mathcal{H}_{\gamma}^{e_{1}}$, 
	$Y^{\prime} \in E^{\beta^{\prime}}L_{m}\mathcal{H}_{\gamma-\beta}^{e_{1} \times e}$ and 
	$\hat{R}^{Y} \in C_{2}^{\beta+\beta^{\prime}}L_{m}\mathcal{H}_{\gamma-\beta-\beta^{\prime}}^{e_{1}}$. 
	Moreover, we have 
	\begin{align*}
		\left\|Y, Y^{\prime}\right\|_{\mathbf{D}_{X}^{\beta, \beta^{\prime}}L_{m}\mathcal{H}_{\gamma}}\left(1+\left|\mathbf{X}\right|_{\alpha}\right)^{-1} &\lesssim \left\|Y\right\|_{E^{\beta}L_{m}\mathcal{H}_{\gamma}}+\left\|Y^{\prime}\right\|_{E^{\beta^{\prime}}L_{m}\mathcal{H}_{\gamma-\beta}}+\left\|\mathbb{E}_{\cdot}\hat{R}^{Y}\right\|_{\beta+\beta^{\prime}, m, \gamma-\beta-\beta^{\prime}}\\
		&\lesssim \left\|Y, Y^{\prime}\right\|_{\mathbf{D}_{X}^{\beta, \beta^{\prime}}L_{m}\mathcal{H}_{\gamma}}\left(1+\left|\mathbf{X}\right|_{\alpha}\right). 
	\end{align*}
\end{proposition}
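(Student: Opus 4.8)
The plan is to derive both the equivalent description and the two‑sided norm estimate from a single pointwise identity relating the mild remainder $\hat R^Y$ to the remainder $R^Y$, combined with Proposition~\ref{asp}. Starting from the elementary relation $\hat\delta Y_{s,t}=\delta Y_{s,t}-(S_{s,t}-\mathrm{id})Y_s$, which follows directly from the definitions of $\delta$ and $\hat\delta$, and subtracting $S_{s,t}Y_s'\delta X_{s,t}$ from both sides, I obtain
\[
	\hat R^Y_{s,t}=R^Y_{s,t}-(S_{s,t}-\mathrm{id})Y_s-(S_{s,t}-\mathrm{id})\bigl(Y_s'\delta X_{s,t}\bigr),\qquad (s,t)\in\Delta_2 .
\]
Denote the correction by $E_{s,t}:=(S_{s,t}-\mathrm{id})Y_s+(S_{s,t}-\mathrm{id})(Y_s'\delta X_{s,t})$. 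Since $S_{s,t}$ and $\delta X_{s,t}$ are deterministic while $Y_s,Y_s'$ are $\mathcal{F}_s$‑measurable, $E_{s,t}$ is $\mathcal{F}_s$‑measurable; hence $\mathbb{E}_sE_{s,t}=E_{s,t}$ and therefore $\mathbb{E}_\cdot\hat R^Y=\mathbb{E}_\cdot R^Y-E$, while the identity $\hat R^Y=R^Y-E$ itself holds pathwise. So the whole proposition will follow once I show $E\in C_2^{\beta+\beta'}L_m\mathcal{H}_{\gamma-\beta-\beta'}^{e_1}$ with a good bound on its norm.

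To estimate $E$, I would apply the second inequality in Proposition~\ref{asp}. Taking there $\gamma_1=\gamma-\beta-\beta'$ and $\gamma_2=\gamma$ — admissible because $\beta+\beta'\le 2\alpha\le 1$ — gives $\|(S_{s,t}-\mathrm{id})Y_s\|_{m,\gamma-\beta-\beta'}\lesssim|t-s|^{\beta+\beta'}\|Y\|_{0,m,\gamma}$; taking $\gamma_1=\gamma-\beta-\beta'$ and $\gamma_2=\gamma-\beta$ and using $|Y_s'\delta X_{s,t}|_{\gamma-\beta}\le|Y_s'|_{\gamma-\beta}|\delta X_{s,t}|\le|\mathbf{X}|_\alpha|Y_s'|_{\gamma-\beta}|t-s|^\alpha$ together with $\alpha\ge\beta$ gives
\[
	\bigl\|(S_{s,t}-\mathrm{id})(Y_s'\delta X_{s,t})\bigr\|_{m,\gamma-\beta-\beta'}\lesssim|t-s|^{\alpha+\beta'}|\mathbf{X}|_\alpha\|Y'\|_{0,m,\gamma-\beta}\lesssim|t-s|^{\beta+\beta'}|\mathbf{X}|_\alpha\|Y'\|_{0,m,\gamma-\beta}.
\]
As $E$ is plainly continuous on $\Delta_2$ and adapted, this yields $E\in C_2^{\beta+\beta'}L_m\mathcal{H}_{\gamma-\beta-\beta'}^{e_1}$ with $\|E\|_{\beta+\beta',m,\gamma-\beta-\beta'}\lesssim\|Y\|_{0,m,\gamma}+|\mathbf{X}|_\alpha\|Y'\|_{0,m,\gamma-\beta}$.

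The conclusions then follow quickly. Since $C_2^{\beta+\beta'}L_m\mathcal{H}_{\gamma-\beta-\beta'}^{e_1}$ is a vector space and $\mathbb{E}_\cdot\hat R^Y$ and $\mathbb{E}_\cdot R^Y$ differ by $E$, under the common hypotheses $Y\in E^\beta L_m\mathcal{H}_\gamma^{e_1}$ and $Y'\in E^{\beta'}L_m\mathcal{H}_{\gamma-\beta}^{e_1\times e}$ one has $\mathbb{E}_\cdot R^Y\in C_2^{\beta+\beta'}L_m\mathcal{H}_{\gamma-\beta-\beta'}^{e_1}$ if and only if $\mathbb{E}_\cdot\hat R^Y\in C_2^{\beta+\beta'}L_m\mathcal{H}_{\gamma-\beta-\beta'}^{e_1}$, which is the asserted description of $\mathbf{D}_X^{\beta,\beta'}L_m\mathcal{H}_\gamma^{e_1}$; applying the same reasoning to the pathwise identity $\hat R^Y=R^Y-E$ gives the corresponding equivalence for $\mathcal{D}_X^{\beta,\beta'}L_m\mathcal{H}_\gamma^{e_1}$. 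For the norm equivalence, the bound on $\|E\|$ together with $\|Y\|_{0,m,\gamma}\le\|Y\|_{E^\beta L_m\mathcal{H}_\gamma}$ and $\|Y'\|_{0,m,\gamma-\beta}\le\|Y'\|_{E^{\beta'}L_m\mathcal{H}_{\gamma-\beta}}$ shows
\[
	\bigl|\,\|\mathbb{E}_\cdot\hat R^Y\|_{\beta+\beta',m,\gamma-\beta-\beta'}-\|\mathbb{E}_\cdot R^Y\|_{\beta+\beta',m,\gamma-\beta-\beta'}\,\bigr|\lesssim\|Y\|_{E^\beta L_m\mathcal{H}_\gamma}+|\mathbf{X}|_\alpha\|Y'\|_{E^{\beta'}L_m\mathcal{H}_{\gamma-\beta}};
\]
adding $\|Y\|_{E^\beta L_m\mathcal{H}_\gamma}+\|Y'\|_{E^{\beta'}L_m\mathcal{H}_{\gamma-\beta}}$ to each side and comparing with the definition of $\|Y,Y'\|_{\mathbf{D}_X^{\beta,\beta'}L_m\mathcal{H}_\gamma}$ produces the claimed inequalities, the powers $(1+|\mathbf{X}|_\alpha)^{\pm 1}$ absorbing the factor $|\mathbf{X}|_\alpha$ in front of $\|Y'\|$. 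I do not anticipate any real obstacle here; the only point needing attention is the index bookkeeping in the estimate for $E$ — ensuring that both correction terms land in $\mathcal{H}_{\gamma-\beta-\beta'}$ with Hölder exponent at least $\beta+\beta'$ — which is precisely where the standing assumptions $0<\beta'\le\beta\le\alpha\le 1/2$ enter, through $\beta+\beta'\le 2\alpha\le 1$ and $\alpha\ge\beta$.
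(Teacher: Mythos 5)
Your proposal is correct and follows essentially the same route as the paper: both rest on the identity $R^Y_{s,t}-\hat R^Y_{s,t}=(S_{s,t}-\mathrm{id})Y_s+(S_{s,t}-\mathrm{id})Y_s'\delta X_{s,t}$ and the bound $\|R^Y-\hat R^Y\|_{\beta+\beta',m,\gamma-\beta-\beta'}\lesssim\|Y\|_{E^\beta L_m\mathcal{H}_\gamma}+\|Y'\|_{E^{\beta'}L_m\mathcal{H}_{\gamma-\beta}}|\mathbf{X}|_\alpha$ obtained from Proposition~\ref{asp}. Your extra remark that the correction term is $\mathcal{F}_s$-measurable is harmless but unnecessary, since conditional expectation is in any case a contraction on $L^m$.
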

\begin{proof}
	It suffices to show that 
	\begin{equation} \label{dei}
		\left\|R^{Y}-\hat{R}^{Y}\right\|_{\beta+\beta^{\prime}, m, \gamma-\beta-\beta^{\prime}} \lesssim \left\|Y\right\|_{E^{\beta}L_{m}\mathcal{H}_{\gamma}}+\left\|Y^{\prime}\right\|_{E^{\beta^{\prime}}L_{m}\mathcal{H}_{\gamma-\beta}}\left|\mathbf{X}\right|_{\alpha},
	\end{equation}
	for every $\left(Y, Y^{\prime}\right) \in E^{\beta}L_{m}\mathcal{H}_{\gamma}^{e_{1}} \times E^{\beta^{\prime}}L_{m}\mathcal{H}_{\gamma-\beta}^{e_{1} \times e}$. 
	Indeed, by Proposition \ref{asp} we have 
	\begin{align*}
		\left\|R_{s, t}^{Y}-\hat{R}_{s, t}^{Y}\right\|_{m, \gamma-\beta-\beta^{\prime}}&=\left\|\left(S_{s, t}-id\right)Y_{s}+\left(S_{s, t}-id\right)Y_{s}^{\prime}\delta X_{s, t}\right\|_{m, \gamma-\beta-\beta^{\prime}}\\
		&\lesssim \left|t-s\right|^{\beta+\beta^{\prime}}\left\|Y_{s}\right\|_{m, \gamma}+\left|t-s\right|^{\beta^{\prime}}\left\|Y_{s}^{\prime}\right\|_{m, \gamma-\beta}\left|\delta X_{s, t}\right|\\
		&\lesssim \left(\left\|Y\right\|_{0, m, \gamma}+T^{\alpha-\beta}\left\|Y^{\prime}\right\|_{0, m, \gamma-\beta}\left|\mathbf{X}\right|_{\alpha}\right)\left|t-s\right|^{\beta+\beta^{\prime}}, \quad \forall \left(s, t\right) \in \Delta_{2}.
	\end{align*}
	Hence, the estimate \eqref{dei} holds. 
\end{proof}
For $X, \bar{X} \in C^{\alpha}\left(\left[0, T\right], \mathbb{R}^{e}\right)$, 
$\left(Y, Y^{\prime}\right) \in \mathbf{D}_{X}^{\beta, \beta^{\prime}}L_{m}\mathcal{H}_{\gamma}^{e_{1}}$ 
and $\left(\bar{Y}, \bar{Y}^{\prime}\right) \in \mathbf{D}_{\bar{X}}^{\beta, \beta^{\prime}}L_{m}\mathcal{H}_{\gamma}^{e_{1}}$, 
we introduce the ``distance" 
\begin{equation*}
	\left\|Y, Y^{\prime}; \bar{Y}, \bar{Y}^{\prime}\right\|_{\mathbf{D}_{X, \bar{X}}^{\beta, \beta^{\prime}}L_{m}\mathcal{H}_{\gamma}}:=\left\|\Delta Y\right\|_{E^{\beta}L_{m}\mathcal{H}_{\gamma}}+\left\|\Delta Y^{\prime}\right\|_{E^{\beta^{\prime}}L_{m}\mathcal{H}_{\gamma-\beta}}+\left\|\mathbb{E}_{\cdot}\Delta R^{Y}\right\|_{\beta+\beta^{\prime}, m, \gamma-\beta-\beta^{\prime}}.
\end{equation*}
Although this ``distance" is not a metric for $X \neq \bar{X}$, since 
$\left(Y, Y^{\prime}\right)$ and $\left(\bar{Y}, \bar{Y}^{\prime}\right)$ 
lie in different Banach spaces, 
it will be usefull to describe the stability of convolution and composition 
and continuity of the mild solution map. 
Analogous to the proof of estimate \eqref{dei}, we can show that 
\begin{align}
	\left\|\Delta R^{Y}-\Delta \hat{R}^{Y}\right\|_{\beta+\beta^{\prime}, m, \gamma-\beta-\beta^{\prime}} &\lesssim \left\|Y^{\prime}\right\|_{E^{\beta^{\prime}}L_{m}\mathcal{H}_{\gamma-\beta}}\rho_{\alpha}\left(\mathbf{X}, \bar{\mathbf{X}}\right) \notag\\
	&\quad+\left\|\Delta Y\right\|_{E^{\beta}L_{m}\mathcal{H}_{\gamma}}+\left\|\Delta Y^{\prime}\right\|_{E^{\beta^{\prime}}L_{m}\mathcal{H}_{\gamma-\beta}}\left|\bar{\mathbf{X}}\right|_{\alpha}. \label{des}
\end{align}
\subsection{Rough stochastic convolutions}
To establish rough stochastic convolutions, we need the following 
mild stochastic sewing lemma, which combines the stochastic sewing lemma 
introduced by Lê \cite[Theorem 2.1]{Le20} and \cite[Theorem 3.1]{Le23} in general Banach spaces, 
and the mild sewing lemma introduced by Gubinelli and Tindel \cite[Theorem 3.5]{GT10}. 
See Li and Sieber \cite[Proposition 3.1]{LS22} for another version. 
\begin{lemma} \label{mssl}
	Let $A \in C_{2}^{\alpha}L_{m}\mathcal{H}_{\gamma}$. 
	Assume there exist positive constants $K_{1}, K_{2}, z_{1}, z_{2}$ and a process 
	$\Lambda: \Delta_{3} \times \Omega \rightarrow \mathcal{H}_{\gamma}$ such that 
	$\frac{1}{2} < z_{1} \leq 1 < z_{2}$ and 
	\begin{equation*}
		\hat{\delta} A_{s, r, t}=S_{r, t}\Lambda_{s, r, t},
	\end{equation*}
	\begin{equation*}
		\left\|\Lambda_{s, r, t}\right\|_{m, \gamma+z_{2}-z_{1}} \leq K_{1}\left|t-s\right|^{\frac{1}{2}}\left|t-r\right|^{z_{1}-\frac{1}{2}}, \quad \left\|\mathbb{E}_{s}\Lambda_{s, r, t}\right\|_{m, \gamma} \leq K_{2}\left|t-s\right|\left|t-r\right|^{z_{2}-1},
	\end{equation*}
	for every $\left(s, r, t\right) \in \Delta_{3}$. 
	Then there exists unique $\mathcal{A} \in \hat{C}^{\alpha}L_{m}\mathcal{H}_{\gamma}$ with $\mathcal{A}_{0}=0$ such that 
	\begin{equation*}
		\lim_{\pi \in \mathcal{P}\left[s, t\right], \left|\pi\right| \rightarrow 0}\left\|\hat{\delta}\mathcal{A}_{s, t}-\sum_{\left[r, v\right] \in \pi}S_{v, t}A_{r, v}\right\|_{m, \gamma}=0, \quad \forall \left(s, t\right) \in \Delta_{2}.
	\end{equation*}
	Moreover, for every $\eta \in \left[0, z_{2}\right)$ we have 
	\begin{equation} \label{mssle}
		\left\|\hat{\delta}\mathcal{A}_{s, t}-A_{s, t}\right\|_{m, \gamma+\eta} \lesssim K_{1}\left|t-s\right|^{z_{1}-\left(\eta+z_{1}-z_{2}\right)^{+}}+K_{2}\left|t-s\right|^{z_{2}-\eta}, \quad \forall \left(s, t\right) \in \Delta_{2}.
	\end{equation}
\end{lemma}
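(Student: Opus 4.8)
The plan is to combine the classical deterministic mild sewing lemma of Gubinelli--Tindel with Lê's stochastic sewing lemma, exploiting the structure $\hat\delta A_{s,r,t}=S_{r,t}\Lambda_{s,r,t}$ so that the ``propagated'' partition sums behave like ordinary sewing sums. First I would set, for a partition $\pi\in\mathcal P[s,t]$, the approximating sum $\mathcal A^\pi_{s,t}:=\sum_{[r,v]\in\pi}S_{v,t}A_{r,v}$, and show this is Cauchy in $L^m(\Omega,\mathcal H_\gamma)$ as $|\pi|\to 0$. The key algebraic identity is that if $\pi'$ is obtained from $\pi$ by removing one point $u$ with neighbours $r<u<v$, then $\mathcal A^{\pi}_{s,t}-\mathcal A^{\pi'}_{s,t}=-S_{v,t}\big(A_{r,v}-A_{r,u}-A_{u,v}\big)+\big(S_{v,t}-S_{v,t}\big)(\cdots)$; more precisely the telescoping leaves exactly $-S_{v,t}\hat\delta A_{r,u,v}=-S_{v,t}S_{u,v}\Lambda_{r,u,v}=-S_{u,t}\Lambda_{r,u,v}$. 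Thus successive coarsenings produce increments controlled by $S_{u,t}\Lambda_{r,u,v}$, and the two hypotheses on $\Lambda$ are precisely the two-parameter bounds ($L^m$-bound of order $\tfrac12$ in $|t-s|$ and order $z_1-\tfrac12$ in $|t-r|$; conditional-expectation bound of order $1$ and $z_2-1$) required by Lê's stochastic sewing lemma. Applying the propagator bound $\|S_{u,t}v\|_{m,\gamma}\lesssim|t-u|^{z_1-z_2}\|v\|_{m,\gamma+z_2-z_1}$ from Proposition \ref{asp} converts the $\Lambda$-estimate at regularity $\gamma+z_2-z_1$ into the estimate at regularity $\gamma$ with an extra factor $|t-u|^{z_1-z_2}$; since the exponent $|t-u|$ that appears in the coarsening bound is $z_1-\tfrac12+(z_1-z_2)$... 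I would instead keep the two scales separate and feed the pair $(A,\Lambda)$ directly into the stochastic sewing machinery, which is designed to absorb exactly such two-parameter bounds with a gain because $z_1>\tfrac12$ and $z_2>1$.

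Concretely, I would invoke the (Banach-space) stochastic sewing lemma of Lê in the form of \cite[Theorem 3.1]{Le23} applied to the germ $(s,t)\mapsto A_{s,t}$ valued in $\mathcal H_\gamma$, but with the coarsening error $\hat\delta A_{s,r,t}$ replaced by its propagated form. Because $S_{r,t}$ is a uniformly bounded family on $\mathcal H_\gamma$ over the compact simplex, the hypotheses of the stochastic sewing lemma are met after using Proposition \ref{asp} to trade spatial smoothing for time decay: the $L^m$-part of $S_{r,t}\Lambda_{s,r,t}$ is bounded by $K_1|t-s|^{1/2}|t-r|^{z_1-1/2}$ at regularity $\gamma$ (no smoothing needed there, just boundedness of $S$), and the conditional part by $K_2|t-s|\,|t-r|^{z_2-1}$; with $z_1>1/2$ and $z_2>1$ these are summable along dyadic coarsenings. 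The stochastic sewing lemma then yields a unique $\mathcal A$ with $\mathcal A_0=0$, $\mathcal A\in\hat C^{?}L_m\mathcal H_\gamma$, and the bound $\|\hat\delta\mathcal A_{s,t}-A_{s,t}\|_{m,\gamma}\lesssim K_1|t-s|^{z_1}+K_2|t-s|^{z_2}$ (this is the $\eta=0$ case of \eqref{mssle}). The defining limit in the statement is then exactly the $L^m$-convergence of $\mathcal A^\pi$ asserted by the lemma.

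For the refined estimate \eqref{mssle} with general $\eta\in[0,z_2)$, I would redo the telescoping bound but now measure everything in $\mathcal H_{\gamma+\eta}$: here the smoothing property genuinely enters. For the deterministic ($K_2$) part one uses $\|S_{u,t}\Lambda_{r,u,v}\|_{m,\gamma+\eta}$ via Proposition \ref{asp} when $\eta\le z_2-z_1$... actually the cleanest route is: write $\hat\delta\mathcal A_{s,t}-A_{s,t}=\lim(\mathcal A^\pi_{s,t}-A_{s,t})$ and bound $\mathcal A^\pi_{s,t}-A_{s,t}$ by summing the coarsening increments $-S_{u,t}\Lambda_{r,u,v}$ along a sequence from the trivial partition to $\pi$. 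Each such increment in $\mathcal H_{\gamma+\eta}$-norm: for the part controlled by the $K_1$-bound we have $\Lambda_{r,u,v}\in\mathcal H_{\gamma+z_2-z_1}$, so if $\eta\le z_2-z_1$ no smoothing cost and we get $|v-r|^{1/2}|v-u|^{z_1-1/2}$-type terms summing to $|t-s|^{z_1}$, whereas if $\eta>z_2-z_1$ the smoothing $S_{u,t}$ costs $|t-u|^{(z_2-z_1)-\eta}=|t-u|^{-(\eta+z_1-z_2)}$, producing the exponent $z_1-(\eta+z_1-z_2)^+$; for the conditional-expectation part the $\mathbb E_s$-bound gives $K_2|v-r|\,|v-u|^{z_2-1}$ in $\mathcal H_\gamma$, and smoothing to $\mathcal H_{\gamma+\eta}$ via $S_{u,t}$ with cost $|t-u|^{-\eta}$ gives, after the stochastic-sewing summation, $K_2|t-s|^{z_2-\eta}$, valid since $\eta<z_2$. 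Summing the two contributions yields \eqref{mssle}. Uniqueness follows as usual: two such limits $\mathcal A,\tilde{\mathcal A}$ have $\hat\delta(\mathcal A-\tilde{\mathcal A})_{s,t}$ both of order $o(|t-s|)$ in the appropriate sense forcing $\hat\delta(\mathcal A-\tilde{\mathcal A})=0$, hence $\mathcal A-\tilde{\mathcal A}$ is $S$-covariant with $\mathcal A_0-\tilde{\mathcal A}_0=0$, so it vanishes.

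The main obstacle I expect is the bookkeeping of the \emph{two} exponents $z_1$ and $z_2$ simultaneously through the propagator smoothing, in particular getting the sharp exponent $z_1-(\eta+z_1-z_2)^+$ rather than a lossy one: one must be careful whether to pay the smoothing cost on the $|t-u|$ factor or distribute it, and to check that the resulting time exponents stay strictly above the thresholds ($>1/2$ for the $L^m$-sum, $>1$ for the conditional sum) needed for the stochastic sewing lemma's convergence after each coarsening. A secondary technical point is verifying measurability/adaptedness of $\mathcal A$ and that $\mathcal A\in\hat C^\alpha L_m\mathcal H_\gamma$ (the claimed Hölder exponent $\alpha$), which follows from the $\eta=0$ estimate together with $z_1,z_2\ge\alpha$ (since $z_1\le 1$ and the relevant regularity is capped by $\alpha$ in applications), but should be stated carefully.
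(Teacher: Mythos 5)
Your proposal follows essentially the same route as the paper: existence and uniqueness are deferred to the (mild) stochastic sewing machinery, and the estimate \eqref{mssle} is obtained by telescoping over dyadic coarsenings, where removing a midpoint produces exactly $-S_{u,t}\Lambda_{r,u,v}$, which is then split into a martingale part (handled by BDG, using the $K_1$-bound at regularity $\gamma+z_2-z_1$ with smoothing cost $|t-u|^{-(\eta+z_1-z_2)^+}$) and a conditional-expectation part (using the $K_2$-bound with cost $|t-u|^{-\eta}$), yielding the exponents $z_1-(\eta+z_1-z_2)^{+}$ and $z_2-\eta$. The only detail you leave implicit is the comparison of the weighted partition sums with singular integrals (the paper's auxiliary exponents $\kappa_1,\kappa_2$), which is exactly the bookkeeping you flag and works out under $\eta<z_2$.
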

\begin{proof}
	The existence and uniqueness of the process $\mathcal{A}$ can be obtained 
	analogous to the proof of \cite[Proposition 3.1 (i)]{LS22}. 
	We only show the estimate \eqref{mssle}. 
	For fixed $\left(s, t\right) \in \Delta_{2}$, 
	let $\pi_{n}$ be the dyadic partition of interval $\left[s, t\right]$, i.e. 
	\begin{equation*}
		\pi_{n}:=\left\{\left[s+k2^{-n}\left|t-s\right|, s+\left(k+1\right)2^{-n}\left|t-s\right|\right]: k=0, 1, \cdots, 2^{n}-1\right\}, \quad n=0, 1, \cdots. 
	\end{equation*}
	Define 
	\begin{equation*}
		A_{s, t}^{n}:=\sum_{\left[r, v\right] \in \pi_{n}}S_{v, t}A_{r, v}, \quad n=0, 1, \cdots.
	\end{equation*}
	Then $A_{s, t}^{0}=A_{s, t}$ and 
	\begin{align*}
		A_{s, t}^{n+1}-A_{s, t}^{n}&=\sum_{\left[r, v\right] \in \pi_{n}}\left(S_{\frac{r+v}{2}, t}A_{r, \frac{r+v}{2}}+S_{v, t}A_{\frac{r+v}{2}, v}-S_{v, t}A_{r, v}\right)\\
		&=-\sum_{\left[r, v\right] \in \pi_{n}}S_{v, t}\hat{\delta}A_{r, \frac{r+v}{2}, v}=-\sum_{\left[r, v\right] \in \pi_{n}}S_{\frac{r+v}{2}, t}\Lambda_{r, \frac{r+v}{2}, v}, \quad n=0, 1, \cdots.
	\end{align*}
	Taking $2\left(\eta+z_{1}-z_{2}\right)-1 < \kappa_{1} < 2z_{1}-1$ and $\eta-1 < \kappa_{2} < z_{2}-1$, 
	applying Proposition \ref{asp}, Minkowski's inequality and the BDG inequality, we have 
	\begin{align*}
		&\left\|A_{s, t}^{n+1}-A_{s, t}^{n}\right\|_{m, \gamma+\eta}=\left\|\sum_{\left[r, v\right] \in \pi_{n}}S_{\frac{r+v}{2}, t}\left(\Lambda_{r, \frac{r+v}{2}, v}-\mathbb{E}_{r}\Lambda_{r, \frac{r+v}{2}, v}\right)+\sum_{\left[r, v\right] \in \pi_{n}}S_{\frac{r+v}{2}, t}\mathbb{E}_{r}\Lambda_{r, \frac{r+v}{2}, v}\right\|_{m, \gamma+\eta}\\
		& \lesssim \left(\sum_{\left[r, v\right] \in \pi_{n}}\left\|S_{\frac{r+v}{2}, t}\left(\Lambda_{r, \frac{r+v}{2}, v}-\mathbb{E}_{r}\Lambda_{r, \frac{r+v}{2}, v}\right)\right\|_{m, \gamma+\eta}^{2}\right)^{\frac{1}{2}}+\sum_{\left[r, v\right] \in \pi_{n}}\left\|S_{\frac{r+v}{2}, t}\mathbb{E}_{r}\Lambda_{r, \frac{r+v}{2}, v}\right\|_{m, \gamma+\eta}\\
		& \lesssim \left(\sum_{\left[r, v\right] \in \pi_{n}}\left\|S_{\frac{r+v}{2}, t}\Lambda_{r, \frac{r+v}{2}, v}\right\|_{m, \gamma+\eta}^{2}\right)^{\frac{1}{2}}+\sum_{\left[r, v\right] \in \pi_{n}}\left\|S_{\frac{r+v}{2}, t}\mathbb{E}_{r}\Lambda_{r, \frac{r+v}{2}, v}\right\|_{m, \gamma+\eta}\\
		& \lesssim \left(\sum_{\left[r, v\right] \in \pi_{n}}\left|t-\frac{r+v}{2}\right|^{-2\left(\eta+z_{1}-z_{2}\right)^{+}}\left\|\Lambda_{r, \frac{r+v}{2}, v}\right\|_{m, \gamma+z_{2}-z_{1}}^{2}\right)^{\frac{1}{2}}+\sum_{\left[r, v\right] \in \pi_{n}}\left|t-\frac{r+v}{2}\right|^{-\eta}\left\|\mathbb{E}_{r}\Lambda_{r, \frac{r+v}{2}, v}\right\|_{m, \gamma}\\
		& \lesssim K_{1}\left(\sum_{\left[r, v\right] \in \pi_{n}}\left|t-\frac{r+v}{2}\right|^{\kappa_{1}-2\left(\eta+z_{1}-z_{2}\right)^{+}}\left|v-r\right|\left|\frac{v-r}{2}\right|^{2z_{1}-1-\kappa_{1}}\right)^{\frac{1}{2}}\\
		& \quad+K_{2}\sum_{\left[r, v\right] \in \pi_{n}}\left|t-\frac{r+v}{2}\right|^{\kappa_{2}-\eta}\left|v-r\right|\left|\frac{v-r}{2}\right|^{z_{2}-1-\kappa_{2}}\\
		& \lesssim K_{1}\left(\frac{\left|t-s\right|}{2^{n+1}}\right)^{z_{1}-\frac{1}{2}\left(1+\kappa_{1}\right)}\left(\int_{s}^{t}\left|t-r\right|^{\kappa_{1}-2\left(\eta+z_{1}-z_{2}\right)^{+}}dr\right)^{\frac{1}{2}}+K_{2}\left(\frac{\left|t-s\right|}{2^{n+1}}\right)^{z_{2}-1-\kappa_{2}}\int_{s}^{t}\left|t-r\right|^{\kappa_{2}-\eta}dr\\
		& \lesssim K_{1}2^{-\left(n+1\right)\left(z_{1}-\frac{1+\kappa_{1}}{2}\right)}\left|t-s\right|^{z_{1}-\left(\eta+z_{1}-z_{2}\right)^{+}}+K_{2}2^{-\left(n+1\right)\left(z_{2}-1-\kappa_{2}\right)}\left|t-s\right|^{z_{2}-\eta}, \quad n=0, 1, \cdots.
	\end{align*}
	Hence, 
	\begin{equation*}
		\left\|\hat{\delta}\mathcal{A}_{s, t}-A_{s, t}\right\|_{m, \gamma+\eta} \lesssim \sum_{n=0}^{\infty}\left\|A_{s, t}^{n+1}-A_{s, t}^{n}\right\|_{m, \gamma+\eta} \lesssim K_{1}\left|t-s\right|^{z_{1}-\left(\eta+z_{1}-z_{2}\right)^{+}}+K_{2}\left|t-s\right|^{z_{2}-\eta}.
	\end{equation*}
\end{proof}
Now we give the following result on rough stochastic convolutions. 
\begin{theorem} \label{rsci}
	Let $\mathbf{X}=\left(X, \mathbb{X}\right) \in \mathscr{C}^{\alpha}$ and 
	$\left(Y, Y^{\prime}\right) \in \mathbf{D}_{X}^{\beta, \beta^{\prime}}L_{m}\mathcal{H}_{\gamma}^{e}$ 
	with $\alpha+\beta+\beta^{\prime} > 1$. 
	Then there exists unique 
	\begin{equation*}
		Z:=\int_{0}^{\cdot}S_{r, \cdot}Y_{r}d\mathbf{X}_{r} \in \hat{C}^{\alpha}L_{m}\mathcal{H}_{\gamma-\beta-\beta^{\prime}} 
	\end{equation*}
	with $Z_{0}=0$ such that for every $\left(s, t\right) \in \Delta_{2}$, 
	\begin{equation} \label{rscid}
		\hat{\delta}Z_{s, t}=\int_{s}^{t}S_{r, t}Y_{r}d\mathbf{X}_{r}=\lim_{\pi \in \mathcal{P}\left[s, t\right], \left|\pi\right| \rightarrow 0}\sum_{\left[r, v\right] \in \pi}S_{r, t}\left(Y_{r}\delta X_{r, v}+Y_{r}^{\prime}\mathbb{X}_{r, v}\right)
	\end{equation}
	holds in $L^{m}\left(\Omega, \mathcal{H}_{\gamma-\beta-\beta^{\prime}}\right)$, 
	and for every $\eta \in \left[0, \alpha+\beta+\beta^{\prime}\right)$ and 
	$\left(s, t\right) \in \Delta_{2}$ we have 
	\begin{equation} \label{rscie1}
		\left\|\hat{\delta}Z_{s, t}-S_{s, t}\left(Y_{s}\delta X_{s, t}+Y_{s}^{\prime}\mathbb{X}_{s, t}\right)\right\|_{m, \gamma+\eta-\beta-\beta^{\prime}} \lesssim \left\|Y, Y^{\prime}\right\|_{\mathbf{D}_{X}^{\beta, \beta^{\prime}}L_{m}\mathcal{H}_{\gamma}}\left(1+\left|\mathbf{X}\right|_{\alpha}^{2}\right)\left|t-s\right|^{\alpha+\beta+\beta^{\prime}-\eta}.
	\end{equation}
	We call $\int_{0}^{\cdot}S_{r, \cdot}Y_{r}d\mathbf{X}_{r}$ the rough stochastic convolution of $\left(Y, Y^{\prime}\right)$ 
	against $\mathbf{X}$. 
	As a consequence, we have $Z \in \hat{C}^{\alpha-\theta}L_{m}\mathcal{H}_{\gamma+\theta}$ for every $\theta \in \left[0, \alpha\right)$ and 
	\begin{equation} \label{rscie2}
		\left\|\hat{\delta}Z\right\|_{\alpha-\theta, m, \gamma+\theta} \lesssim \left\|Y, Y^{\prime}\right\|_{\mathbf{D}_{X}^{\beta, \beta^{\prime}}L_{m}\mathcal{H}_{\gamma}}\left(1+\left|\mathbf{X}\right|_{\alpha}^{2}\right).
	\end{equation}
	Moreover, if additionally $\theta \leq \beta^{\prime}$, then the rough stochastic convolution map 
	\begin{equation*}
		\mathbf{D}_{X}^{\beta, \beta^{\prime}}L_{m}\mathcal{H}_{\gamma}^{e} \rightarrow \mathcal{D}_{X}^{\beta, \beta^{\prime}}L_{m}\mathcal{H}_{\gamma+\theta}: \quad \left(Y, Y^{\prime}\right) \mapsto \left(Z, Y\right)
	\end{equation*}
	is a bounded linear map and we have 
	\begin{equation} \label{rscie3}
		\left\|Z, Y\right\|_{\mathbf{D}_{X}^{\beta, \beta^{\prime}}L_{m}\mathcal{H}_{\gamma+\theta}} \lesssim \left\|Y_{0}\right\|_{m, \gamma}\left(1+\left|\mathbf{X}\right|_{\alpha}\right)+T^{\left(\alpha-\beta\right)\wedge\left(\beta-\beta^{\prime}\right)}\left\|Y, Y^{\prime}\right\|_{\mathbf{D}_{X}^{\beta, \beta^{\prime}}L_{m}\mathcal{H}_{\gamma}}\left(1+\left|\mathbf{X}\right|_{\alpha}^{2}\right).
	\end{equation}
\end{theorem}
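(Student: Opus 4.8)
The plan is to construct the rough stochastic convolution $Z$ by applying the mild stochastic sewing lemma (Lemma \ref{mssl}) to the candidate two-parameter germ $A_{s,t}:=S_{s,t}(Y_s\delta X_{s,t}+Y_s'\mathbb{X}_{s,t})$, then to extract the various estimates \eqref{rscie1}--\eqref{rscie3} from the sewing bound \eqref{mssle} together with the algebraic structure of $A$.

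First I would verify that $A\in C_2^\alpha L_m\mathcal{H}_{\gamma-\beta-\beta'}$: since $Y\in E^\beta L_m\mathcal{H}_\gamma$, $Y'\in E^{\beta'}L_m\mathcal{H}_{\gamma-\beta}$, Proposition \ref{asp} gives $\|S_{s,t}Y_s\|_{m,\gamma-\beta-\beta'}\lesssim|t-s|^{\beta+\beta'}\|Y_s\|_{m,\gamma}$ and similarly $\|S_{s,t}Y_s'\mathbb{X}_{s,t}\|_{m,\gamma-\beta-\beta'}\lesssim|t-s|^{\beta'+2\alpha}\|Y_s'\|_{m,\gamma-\beta}|\mathbf{X}|_\alpha$, both controlled by $|t-s|^\alpha$ up to the $T\vee1$ convention; this also identifies the right value $\gamma$ for the lemma as $\gamma-\beta-\beta'$. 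Next I would compute the mild defect $\hat\delta A_{s,r,t}$. Using $S_{r,t}S_{s,r}=S_{s,t}$, the increment relation $\delta X_{s,t}=\delta X_{s,r}+\delta X_{r,t}$, and Chen's relation $\delta\mathbb{X}_{s,r,t}=\delta X_{s,r}\otimes\delta X_{r,t}$, a direct expansion yields
\begin{equation*}
	\hat\delta A_{s,r,t}=-S_{r,t}\bigl(R^Y_{s,r}\,\delta X_{r,t}+\delta Y'_{s,r}\,\mathbb{X}_{r,t}\bigr)+\text{(mild correction terms)},
\end{equation*}
which is of the required form $\hat\delta A_{s,r,t}=S_{r,t}\Lambda_{s,r,t}$. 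Here I must be careful to convert ordinary increments into mild increments (using $\hat\delta Y$, $\hat R^Y$ and Proposition \ref{asp}/Proposition \ref{de}) so that the decomposition is genuinely $S_{r,t}$ times a process $\Lambda$ controlled in the stronger space $\mathcal{H}_{\gamma-\beta-\beta'+z_2-z_1}$; the natural choice is $z_1=\alpha+\beta+\beta'-$ (a small deficit) and $z_2$ slightly larger than $1$, feasible since $\alpha+\beta+\beta'>1$ and $z_1\le1$. The two hypotheses of Lemma \ref{mssl} then reduce to: (a) a pathwise bound $\|\Lambda_{s,r,t}\|_{m,\cdot}\lesssim\|Y,Y'\|(1+|\mathbf{X}|_\alpha)|t-s|^{1/2}|t-r|^{z_1-1/2}$ coming from $\|R^Y_{s,r}\|_{m}\lesssim|r-s|^{\beta+\beta'}$ (via $\|R^Y\|\le\|\mathbb{E}_\cdot R^Y\|+$ a martingale-difference part bounded using $\|\delta Y\|,\|Y'\delta X\|$) and $\|\delta Y'_{s,r}\|_m\lesssim|r-s|^{\beta'}$, combined with $|\delta X_{r,t}|\lesssim|t-r|^\alpha|\mathbf{X}|_\alpha$, $|\mathbb{X}_{r,t}|\lesssim|t-r|^{2\alpha}|\mathbf{X}|_\alpha$; and (b) a conditional bound $\|\mathbb{E}_s\Lambda_{s,r,t}\|_{m,\cdot}\lesssim\|Y,Y'\|(1+|\mathbf{X}|_\alpha)|t-s||t-r|^{z_2-1}$ which uses crucially that $\mathbb{E}_s R^Y_{s,r}=\mathbb{E}_s(\mathbb{E}_\cdot R^Y)_{s,r}$ is controlled in $C_2^{\beta+\beta'}$ by the definition of $\mathbf{D}_X^{\beta,\beta'}$, while the $\delta Y'$ term is handled by writing $\mathbb{E}_s\delta Y'_{s,r}$ and splitting off its own remainder. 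Lemma \ref{mssl} then produces the unique $Z\in\hat C^\alpha L_m\mathcal{H}_{\gamma-\beta-\beta'}$ with $Z_0=0$ and the Riemann-sum limit \eqref{rscid}, and \eqref{mssle} with $\eta$ there equal to our shift gives \eqref{rscie1} directly (absorbing $1+|\mathbf{X}|_\alpha^2$ into the constant via $K_1\lesssim\|Y,Y'\|(1+|\mathbf{X}|_\alpha)$, $K_2\lesssim\|Y,Y'\|(1+|\mathbf{X}|_\alpha)$).

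From \eqref{rscie1} the regularity claim $Z\in\hat C^{\alpha-\theta}L_m\mathcal{H}_{\gamma+\theta}$ and \eqref{rscie2} follow by taking $\eta=\theta+\beta+\beta'$ in \eqref{rscie1} (valid since $\theta<\alpha$ forces $\eta<\alpha+\beta+\beta'$), noting that the ``main term'' $S_{s,t}(Y_s\delta X_{s,t}+Y_s'\mathbb{X}_{s,t})$ itself satisfies the right Hölder bound in $\mathcal{H}_{\gamma+\theta-\beta-\beta'+\cdots}$ by Proposition \ref{asp}, and combining. For the last assertion I would show $(Z,Y)\in\mathcal{D}_X^{\beta,\beta'}L_m\mathcal{H}_{\gamma+\theta}$ when $\theta\le\beta'$: by Proposition \ref{de} it suffices to bound $\|Z\|_{E^\beta L_m\mathcal{H}_{\gamma+\theta}}$, $\|Y\|_{E^{\beta'}L_m\mathcal{H}_{\gamma+\theta-\beta}}$ and $\|\hat R^Z\|_{\beta+\beta',m,\gamma+\theta-\beta-\beta'}$ where $\hat R^Z_{s,t}=\hat\delta Z_{s,t}-S_{s,t}Y_s\delta X_{s,t}$. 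The middle quantity is $\lesssim\|Y\|_{E^\beta L_m\mathcal{H}_\gamma}$ by Proposition \ref{in} (using $\theta\le\beta'\le\beta\le\alpha$, so $\gamma+\theta-\beta\le\gamma$ and $\beta'\le\beta$); the remainder $\hat R^Z_{s,t}=(\hat\delta Z_{s,t}-S_{s,t}(Y_s\delta X_{s,t}+Y_s'\mathbb{X}_{s,t}))+S_{s,t}Y_s'\mathbb{X}_{s,t}$ is bounded by \eqref{rscie1} plus Proposition \ref{asp} applied to $\mathbb{X}$; and $\|Z\|_{E^\beta L_m\mathcal{H}_{\gamma+\theta}}$ follows from $Z_0=0$, the $\hat\delta Z$ bound \eqref{rscie2}, and Proposition \ref{ne}. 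Tracking the powers of $T$ through Propositions \ref{asp} and \ref{in} — each ``extra'' Hölder exponent converts to a factor $T^{(\alpha-\beta)\wedge(\beta-\beta')}$ — gives the precise form \eqref{rscie3}, with the $\|Y_0\|_{m,\gamma}(1+|\mathbf{X}|_\alpha)$ term arising because $\|Y\|_{0,m,\gamma}\le\|Y_0\|_{m,\gamma}+T^\beta\|\delta Y\|$ is not scale-invariant and $Y_0$ must be carried separately. Linearity of $(Y,Y')\mapsto(Z,Y)$ is immediate from the uniqueness of the sewing limit.

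\textbf{The main obstacle} I anticipate is the computation and correct bookkeeping of $\Lambda=\hat\delta A$: one must carefully expand $S_{s,t}(Y_s\delta X_{s,t}+Y_s'\mathbb{X}_{s,t})-S_{r,t}(S_{r,t}^{-1}\text{?}\cdots)$ — more precisely $A_{s,t}-S_{r,t}A_{s,r}-A_{r,t}$ — and recognize which terms combine into $S_{r,t}R^Y_{s,r}\delta X_{r,t}$ and $S_{r,t}\delta Y'_{s,r}\mathbb{X}_{r,t}$ versus which are genuine ``mild corrections'' of the form $(S_{r,t}-\mathrm{id})(\cdots)$ that need Proposition \ref{asp} to be absorbed at the cost of one extra power of $|t-r|$; getting the exponents $z_1,z_2$ admissible (i.e. $1/2<z_1\le1<z_2$ with $z_1$ close to $\alpha+\beta+\beta'$ and room for the arbitrary $\eta<\alpha+\beta+\beta'$ in the error estimate) requires that $\alpha+\beta+\beta'>1$ be used in exactly the right place. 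The conditional-expectation bound (hypothesis on $\mathbb{E}_s\Lambda$) is the other delicate point, since it is precisely where the definition of stochastic controlled rough path — the $\mathbb{E}_\cdot R^Y\in C_2^{\beta+\beta'}$ hypothesis rather than merely $R^Y\in C_2^{\beta+\beta'}$ — is exploited, and one must check that the $\delta Y'$ contribution's conditional expectation $\mathbb{E}_s\delta Y'_{s,r}$ is likewise controlled (it is, since $Y'\in E^{\beta'}$ and the conditional increment is no larger than the unconditional one in $L^m$). The remaining steps are routine applications of the interpolation and embedding propositions already established.
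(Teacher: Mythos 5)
Your overall strategy is the paper's: define the germ $A_{s,t}=S_{s,t}\left(Y_{s}\delta X_{s,t}+Y_{s}^{\prime}\mathbb{X}_{s,t}\right)$, compute $\hat{\delta}A_{s,r,t}=-S_{r,t}\left(\hat{R}_{s,r}^{Y}\delta X_{r,t}+\hat{\delta}Y_{s,r}^{\prime}\mathbb{X}_{r,t}\right)$ from Chen's relation (this identity is in fact exact once you use the mild remainder and mild increment --- there are no leftover ``mild correction terms''), apply Lemma \ref{mssl}, and then extract \eqref{rscie1}--\eqref{rscie3}. The second half of your outline (taking $\eta=\theta+\beta+\beta^{\prime}$ in \eqref{rscie1} to get \eqref{rscie2}, and assembling \eqref{rscie3} via Propositions \ref{de}, \ref{ne} and \ref{in}, carrying $\left\|Y_{0}\right\|_{m,\gamma}$ separately) matches the paper's proof.

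There is, however, a genuine error in your verification of the hypotheses of Lemma \ref{mssl}. You claim $\left\|R_{s,r}^{Y}\right\|_{m}\lesssim\left|r-s\right|^{\beta+\beta^{\prime}}$, justified by splitting $R^{Y}$ into $\mathbb{E}_{\cdot}R^{Y}$ plus a martingale-difference part bounded via $\delta Y$ and $Y^{\prime}\delta X$. Those ingredients only yield $\left\|R_{s,r}^{Y}\right\|_{m,\gamma-\beta}\lesssim\left|r-s\right|^{\beta}$: the definition of $\mathbf{D}_{X}^{\beta,\beta^{\prime}}L_{m}\mathcal{H}_{\gamma}$ imposes the $(\beta+\beta^{\prime})$-regularity only on $\mathbb{E}_{\cdot}R^{Y}$, not on $R^{Y}$ itself (take $Y=\int h\,dW$, $Y^{\prime}=0$: then $R^{Y}=\delta Y$ has $L^{m}$-regularity $\tfrac{1}{2}<\beta+\beta^{\prime}$ while $\mathbb{E}_{\cdot}R^{Y}=0$; note $\beta+\beta^{\prime}>1-\alpha\geq\tfrac{1}{2}$ is forced by the standing assumptions). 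If $R^{Y}$ itself were $C_{2}^{\beta+\beta^{\prime}}$ one would not need stochastic sewing at all. Consequently your parameter choice --- $z_{1}$ just below $\alpha+\beta+\beta^{\prime}$ and $z_{2}$ just above $1$ --- fails on both ends: the unconditional homogeneity of $\Lambda=-\left(\hat{R}^{Y}_{s,r}\delta X_{r,t}+\hat{\delta}Y^{\prime}_{s,r}\mathbb{X}_{r,t}\right)$ is only $\left(\alpha+\beta\right)\wedge\left(2\alpha+\beta^{\prime}\right)=\alpha+\beta$, so hypothesis (a) cannot hold with $z_{1}>\alpha+\beta$; and with $z_{2}\approx 1$ the conclusion \eqref{mssle} only covers $\eta<z_{2}$ and only gives the rate $\left|t-s\right|^{z_{2}-\eta}$, strictly weaker than the claimed $\left|t-s\right|^{\alpha+\beta+\beta^{\prime}-\eta}$ in \eqref{rscie1}. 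The correct assignment is $z_{1}=\alpha+\beta$ (from $\left\|\hat{R}_{s,r}^{Y}\right\|_{m,\gamma-\beta}\lesssim\left|r-s\right|^{\beta}$ times $\left|\delta X_{r,t}\right|$) and $z_{2}=\alpha+\beta+\beta^{\prime}$ (from $\left\|\mathbb{E}_{s}R_{s,r}^{Y}\right\|_{m,\gamma-\beta-\beta^{\prime}}\lesssim\left|r-s\right|^{\beta+\beta^{\prime}}$ together with the correction \eqref{dei}); since $\tfrac{1}{2}<\alpha+\beta\leq 1<\alpha+\beta+\beta^{\prime}$ these are admissible and reproduce the exponent $\alpha+\beta+\beta^{\prime}-\eta$ exactly. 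With that correction the rest of your argument goes through.
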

\begin{proof}
	Define 
	\begin{equation} \label{ad}
		A_{s, t}:=S_{s, t}\left(Y_{s}\delta X_{s, t}+Y_{s}^{\prime}\mathbb{X}_{s, t}\right), \quad \forall \left(s, t\right) \in \Delta_{2}. 
	\end{equation}
	Clearly, $A$ is measurable and adapted, and 
	$A \in C\left(\Delta_{2}, L^{m}\left(\Omega, \mathcal{H}_{\gamma-\beta-\beta^{\prime}}\right)\right)$. 
	By Proposition \ref{asp}, 
	\begin{align*}
		\left\|A_{s, t}\right\|_{m, \gamma-\beta-\beta^{\prime}} &\lesssim \left\|Y_{s}\right\|_{m, \gamma-\beta-\beta^{\prime}}\left|\delta X_{s, t}\right|+\left\|Y_{s}^{\prime}\right\|_{m, \gamma-\beta-\beta^{\prime}}\left|\mathbb{X}_{s, t}\right|\\
		&\lesssim \left\|Y, Y^{\prime}\right\|_{\mathbf{D}_{X}^{\beta, \beta^{\prime}}L_{m}\mathcal{H}_{\gamma}}\left|\mathbf{X}\right|_{\alpha}\left|t-s\right|^{\alpha}, \quad \forall \left(s, t\right) \in \Delta_{2},
	\end{align*}
	which gives $A \in C_{2}^{\alpha}L_{m}\mathcal{H}_{\gamma-\beta-\beta^{\prime}}$. 
	In view of Chen's relation \eqref{Chen}, we have 
	\begin{equation*}
		\hat{\delta} A_{s, r, t}=-S_{r, t}\left(\hat{R}_{s, r}^{Y}\delta X_{r, t}+\hat{\delta}Y_{s, r}^{\prime}\mathbb{X}_{r, t}\right), \quad \forall \left(s, r, t\right) \in \Delta_{3}.
	\end{equation*}
	Define 
	\begin{equation*}
		\Lambda_{s, r, t}:=-\hat{R}_{s, r}^{Y}\delta X_{r, t}-\hat{\delta}Y_{s, r}^{\prime}\mathbb{X}_{r, t}, \quad \forall \left(s, r, t\right) \in \Delta_{3}.
	\end{equation*}
	By Propositions \ref{asp}, \ref{ne} and \ref{de}, we have 
	\begin{align*}
		\left\|\Lambda_{s, r, t}\right\|_{m, \gamma-\beta} &\lesssim \left(\left\|\hat{\delta}Y_{s, r}\right\|_{m, \gamma-\beta}+\left\|Y_{s}^{\prime}\right\|_{m, \gamma-\beta}\left|\delta X_{s, r}\right|\right)\left|\delta X_{r, t}\right|+\left\|\hat{\delta} Y_{s, r}^{\prime}\right\|_{m, \gamma-\beta}\left|\mathbb{X}_{r, t}\right|\\
		&\lesssim \left\|Y, Y^{\prime}\right\|_{\mathbf{D}_{X}^{\beta, \beta^{\prime}}L_{m}\mathcal{H}_{\gamma}}\left(1+\left|\mathbf{X}\right|_{\alpha}^{2}\right)\left|t-s\right|^{\frac{1}{2}}\left|t-r\right|^{\alpha+\beta-\frac{1}{2}},
	\end{align*}
	\begin{align*}
		\left\|\mathbb{E}_{s}\Lambda_{s, r, t}\right\|_{m, \gamma-\beta-\beta^{\prime}} &\lesssim \left(\left\|\mathbb{E}_{s}R_{s, r}^{Y}\right\|_{m, \gamma-\beta-\beta^{\prime}}+\left\|R_{s, r}^{Y}-\hat{R}_{s, r}^{Y}\right\|_{m, \gamma-\beta-\beta^{\prime}}\right)\left|\delta X_{r, t}\right|+\left\|\hat{\delta} Y_{s, r}^{\prime}\right\|_{m, \gamma-\beta-\beta^{\prime}}\left|\mathbb{X}_{r, t}\right|\\
		&\lesssim \left\|Y, Y^{\prime}\right\|_{\mathbf{D}_{X}^{\beta, \beta^{\prime}}L_{m}\mathcal{H}_{\gamma}}\left(1+\left|\mathbf{X}\right|_{\alpha}^{2}\right)\left|t-s\right|\left|t-r\right|^{\alpha+\beta+\beta^{\prime}-1}, \quad \forall \left(s, r, t\right) \in \Delta_{3}.
	\end{align*}
	Then by Lemma \ref{mssl} with $z_{1}=\alpha+\beta$ and $z_{2}=\alpha+\beta+\beta^{\prime}$, there exists unique $Z \in \hat{C}^{\alpha}L_{m}\mathcal{H}_{\gamma-\beta-\beta^{\prime}}$ 
	with $Z_{0}=0$ such that \eqref{rscid} holds in $L^{m}\left(\Omega, \mathcal{H}_{\gamma-\beta-\beta^{\prime}}\right)$,  
	and for every $\eta \in \left[0, \alpha+\beta+\beta^{\prime}\right)$ the estimate \eqref{rscie1} holds. 
	By Proposition \ref{asp}, for every $\theta \in \left[0, \alpha\right)$ we have 
	\begin{align*}
		\left\|A_{s, t}\right\|_{m, \gamma+\theta} &\lesssim \left|t-s\right|^{-\theta}\left\|Y_{s}\right\|_{m, \gamma}\left|\delta X_{s, t}\right|+\left|t-s\right|^{-\left(\beta+\theta\right)}\left\|Y_{s}^{\prime}\right\|_{m, \gamma-\beta}\left|\mathbb{X}_{s, t}\right|\\
		&\lesssim \left\|Y, Y^{\prime}\right\|_{\mathbf{D}_{X}^{\beta, \beta^{\prime}}L_{m}\mathcal{H}_{\gamma}}\left|\mathbf{X}\right|_{\alpha}\left|t-s\right|^{\alpha-\theta}, \quad \forall \left(s, t\right) \in \Delta_{2}.
	\end{align*}
	In view of \eqref{rscie1} with $\eta=\beta+\beta^{\prime}+\theta$, we have $Z \in \hat{C}^{\alpha-\theta}L_{m}\mathcal{H}_{\gamma+\theta}$ and the estimate \eqref{rscie2} holds. 
	Moreover, for $\theta \leq \beta^{\prime}$ we have 
	\begin{equation*}
		\left\|\hat{\delta}Z\right\|_{\beta, m, \gamma+\theta-\beta} \lesssim T^{\alpha-\beta}\left\|\hat{\delta}Z\right\|_{\alpha, m, \gamma} \lesssim T^{\alpha-\beta}\left\|Y, Y^{\prime}\right\|_{\mathbf{D}_{X}^{\beta, \beta^{\prime}}L_{m}\mathcal{H}_{\gamma}}\left(1+\left|\mathbf{X}\right|_{\alpha}^{2}\right).
	\end{equation*}
	Since $Z_{0}=0$ we have 
	\begin{equation*}
		\left\|Z\right\|_{0, m, \gamma+\theta} \lesssim T^{\alpha-\theta}\left\|\hat{\delta}Z\right\|_{\alpha-\theta, m, \gamma+\theta} \lesssim T^{\alpha-\theta}\left\|Y, Y^{\prime}\right\|_{\mathbf{D}_{X}^{\beta, \beta^{\prime}}L_{m}\mathcal{H}_{\gamma}}\left(1+\left|\mathbf{X}\right|_{\alpha}^{2}\right).
	\end{equation*}
	By Proposition \ref{ne}, we have $Z \in E^{\beta}L_{m}\mathcal{H}_{\gamma+\theta}$ and 
	\begin{equation} \label{rscie4}
		\left\|Z\right\|_{E^{\beta}L_{m}\mathcal{H}_{\gamma+\theta}} \lesssim T^{\alpha-\beta}\left\|Y, Y^{\prime}\right\|_{\mathbf{D}_{X}^{\beta, \beta^{\prime}}L_{m}\mathcal{H}_{\gamma}}\left(1+\left|\mathbf{X}\right|_{\alpha}^{2}\right).
	\end{equation}
	On the other hand, by proposition \ref{in} we have $Y \in E^{\beta}L_{m}\mathcal{H}_{\gamma}^{e} \subset E^{\beta-\theta}L_{m}\mathcal{H}_{\gamma}^{e}$ and 
	\begin{equation*}
		\left\|Y\right\|_{0, m, \gamma+\theta-\beta} \lesssim \left\|Y_{0}\right\|_{m, \gamma+\theta-\beta}+T^{\beta-\theta}\left\|\delta Y\right\|_{\beta-\theta, m, \gamma+\theta-\beta} \lesssim \left\|Y_{0}\right\|_{m, \gamma}+T^{\beta-\theta}\left\|Y, Y^{\prime}\right\|_{\mathbf{D}_{X}^{\beta, \beta^{\prime}}L_{m}\mathcal{H}_{\gamma}}. 
	\end{equation*}
	Combined with  
	\begin{equation*}
		\left\|\delta Y\right\|_{\beta^{\prime}, m, \gamma+\theta-\beta-\beta^{\prime}} \lesssim T^{\beta-\beta^{\prime}}\left\|\delta Y\right\|_{\beta, m, \gamma-\beta} \lesssim T^{\beta-\beta^{\prime}}\left\|Y, Y^{\prime}\right\|_{\mathbf{D}_{X}^{\beta, \beta^{\prime}}L_{m}\mathcal{H}_{\gamma}}, 
	\end{equation*}
	we have $Y \in E^{\beta^{\prime}}L_{m}\mathcal{H}_{\gamma+\theta-\beta}$ and 
	\begin{equation} \label{rscie5}
		\left\|Y\right\|_{E^{\beta^{\prime}}L_{m}\mathcal{H}_{\gamma+\theta-\beta}} \lesssim \left\|Y_{0}\right\|_{m, \gamma}+T^{\beta-\beta^{\prime}}\left\|Y, Y^{\prime}\right\|_{\mathbf{D}_{X}^{\beta, \beta^{\prime}}L_{m}\mathcal{H}_{\gamma}}. 
	\end{equation}
	At last, by Proposition \ref{asp}, 
	\begin{equation*}
		\left\|S_{s, t}Y_{s}^{\prime}\mathbb{X}_{s, t}\right\|_{m, \gamma+\theta-\beta-\beta^{\prime}} \lesssim \left\|Y_{s}^{\prime}\right\|_{m, \gamma-\beta}\left|\mathbb{X}_{s, t}\right| \lesssim \left\|Y, Y^{\prime}\right\|_{\mathbf{D}_{X}^{\beta, \beta^{\prime}}L_{m}\mathcal{H}_{\gamma}}\left|\mathbf{X}\right|_{\alpha}\left|t-s\right|^{2\alpha}, \quad \forall \left(s, t\right) \in \Delta_{2}.
	\end{equation*}
	In view of \eqref{rscie1} with $\eta=\theta$, we have 
	\begin{align*}
		\left\|\hat{R}_{s, t}^{Z}\right\|_{m, \gamma+\theta-\beta-\beta^{\prime}} &\lesssim \left\|\hat{\delta}Z_{s, t}-A_{s, t}\right\|_{m, \gamma+\theta-\beta-\beta^{\prime}}+\left\|S_{s, t}Y_{s}^{\prime}\mathbb{X}_{s, t}\right\|_{m, \gamma+\theta-\beta-\beta^{\prime}}\\
		&\lesssim T^{\alpha-\beta^{\prime}}\left\|Y, Y^{\prime}\right\|_{\mathbf{D}_{X}^{\beta, \beta^{\prime}}L_{m}\mathcal{H}_{\gamma}}\left(1+\left|\mathbf{X}\right|_{\alpha}^{2}\right)\left|t-s\right|^{\beta+\beta^{\prime}}, \quad \forall \left(s, t\right) \in \Delta_{2},
	\end{align*}
	which gives $\hat{R}^{Z} \in C_{2}^{\beta+\beta^{\prime}}L_{m}\mathcal{H}_{\gamma+\theta-\beta-\beta^{\prime}}$ and 
	\begin{equation*}
		\left\|\mathbb{E}_{\cdot}\hat{R}^{Z}\right\|_{\beta+\beta^{\prime}, m, \gamma+\theta-\beta-\beta^{\prime}} \lesssim \left\|\hat{R}^{Z}\right\|_{\beta+\beta^{\prime}, m, \gamma+\theta-\beta-\beta^{\prime}} \lesssim T^{\alpha-\beta^{\prime}}\left\|Y, Y^{\prime}\right\|_{\mathbf{D}_{X}^{\beta, \beta^{\prime}}L_{m}\mathcal{H}_{\gamma}}\left(1+\left|\mathbf{X}\right|_{\alpha}^{2}\right).
	\end{equation*}
	Combining \eqref{dei}, \eqref{rscie4}, \eqref{rscie5} and the last inequality, 
	we have $R^{Z} \in C_{2}^{\beta+\beta^{\prime}}L_{m}\mathcal{H}_{\gamma+\theta-\beta-\beta^{\prime}}$ and 
	\begin{align*}
		\left\|\mathbb{E}_{\cdot}R^{Z}\right\|_{\beta+\beta^{\prime}, m, \gamma+\theta-\beta-\beta^{\prime}} &\lesssim \left\|\mathbb{E}_{\cdot}\hat{R}^{Z}\right\|_{\beta+\beta^{\prime}, m, \gamma+\theta-\beta-\beta^{\prime}}+\left\|Z\right\|_{E^{\beta}L_{m}\mathcal{H}_{\gamma+\theta}}+\left\|Y\right\|_{E^{\beta^{\prime}}L_{m}\mathcal{H}_{\gamma+\theta-\beta}}\left|\mathbf{X}\right|_{\alpha}\\
		&\lesssim \left\|Y_{0}\right\|_{m, \gamma}\left|\mathbf{X}\right|_{\alpha}+T^{\left(\alpha-\beta\right)\wedge\left(\beta-\beta^{\prime}\right)}\left\|Y, Y^{\prime}\right\|_{\mathbf{D}_{X}^{\beta, \beta^{\prime}}L_{m}\mathcal{H}_{\gamma}}\left(1+\left|\mathbf{X}\right|_{\alpha}^{2}\right).
	\end{align*}
	Therefore, $\left(Z, Y\right) \in \mathcal{D}_{X}^{\beta, \beta^{\prime}}L_{m}\mathcal{H}_{\gamma+\theta}$ 
	and the estimate \eqref{rscie3} holds. 
\end{proof}
Then we state the stability of rough stochastic convolution. 
\begin{theorem} \label{rscs}
	Let $\mathbf{X}=\left(X, \mathbb{X}\right), \bar{\mathbf{X}}=\left(\bar{X}, \bar{\mathbb{X}}\right) \in \mathscr{C}^{\alpha}$, 
	$\left(Y, Y^{\prime}\right) \in \mathbf{D}_{X}^{\beta, \beta^{\prime}}L_{m}\mathcal{H}_{\gamma}^{e}$, 
	$\left(\bar{Y}, \bar{Y}^{\prime}\right) \in \mathbf{D}_{\bar{X}}^{\beta, \beta^{\prime}}L_{m}\mathcal{H}_{\gamma}^{e}$ 
	with $\alpha+\beta+\beta^{\prime} > 1$. 
	Assume $\left|\mathbf{X}\right|_{\alpha}, \left|\bar{\mathbf{X}}\right|_{\alpha}, \left\|Y, Y^{\prime}\right\|_{\mathbf{D}_{X}^{\beta, \beta^{\prime}}L_{m}\mathcal{H}_{\gamma}}, \left\|\bar{Y}, \bar{Y}^{\prime}\right\|_{\mathbf{D}_{\bar{X}}^{\beta, \beta^{\prime}}L_{m}\mathcal{H}_{\gamma}} \leq R$ 
	for some $R \geq 0$. 
	Define $Z:=\int_{0}^{\cdot}S_{r, \cdot}Y_{r}d\mathbf{X}_{r}$ and 
	$\bar{Z}:=\int_{0}^{\cdot}S_{r, \cdot}\bar{Y}_{r}d\bar{\mathbf{X}}_{r}$. 
	Then for every $\theta \in \left[0, \beta^{\prime}\right]$ we have 
	\begin{equation} \label{rscse1}
		\left\|Z, Y; \bar{Z}, \bar{Y}\right\|_{\mathbf{D}_{X, \bar{X}}^{\beta, \beta^{\prime}}L_{m}\mathcal{H}_{\gamma+\theta}} \lesssim \rho_{\alpha}\left(\mathbf{X}, \bar{\mathbf{X}}\right)+\left\|\Delta Y_{0}\right\|_{m, \gamma}+T^{\left(\alpha-\beta\right)\wedge\left(\beta-\beta^{\prime}\right)}\left\|Y, Y^{\prime}; \bar{Y}, \bar{Y}^{\prime}\right\|_{\mathbf{D}_{X, \bar{X}}^{\beta, \beta^{\prime}}L_{m}\mathcal{H}_{\gamma}},
	\end{equation}
	for a hidden prefactor depending on $R$. 
\end{theorem}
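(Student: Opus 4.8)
The plan is to mirror the proof of Theorem \ref{rsci}, tracking differences rather than the processes themselves. The key observation that removes the need for a separate ``difference sewing lemma'' is that both convolutions are built from the \emph{same} propagator $S$, so the mild-sewing construction is linear: writing $A_{s,t}:=S_{s,t}(Y_s\delta X_{s,t}+Y_s'\mathbb{X}_{s,t})$ and $\bar A_{s,t}:=S_{s,t}(\bar Y_s\delta\bar X_{s,t}+\bar Y_s'\bar{\mathbb{X}}_{s,t})$, one has $\hat\delta(\Delta Z)_{s,t}=\lim_{|\pi|\to 0}\sum_{[r,v]\in\pi}S_{v,t}\,\Delta A_{r,v}$ and $\Delta Z_0=0$, so $\Delta Z$ is exactly the process produced by Lemma \ref{mssl} applied to $\Delta A$, provided $\Delta A$ verifies the hypotheses of that lemma. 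Hence it suffices to establish the two germ estimates for $\Delta A$ and then reassemble the controlled-rough-path norm of $(\Delta Z,\Delta Y)$ from the resulting sewing bounds.

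First I would compute $\hat\delta(\Delta A)_{s,r,t}=S_{r,t}\,\Delta\Lambda_{s,r,t}$, where, using Chen's relation \eqref{Chen} and the Leibniz-type identity $ab-\bar a\bar b=\Delta a\cdot b+\bar a\cdot\Delta b$, the germ decomposes (at the level needed for the $\mathbb{E}_s$-bound, with base regularity $\gamma-\beta-\beta'$) as $\Delta\Lambda_{s,r,t}=-\Delta\hat R^Y_{s,r}\,\delta X_{r,t}-\hat R^{\bar Y}_{s,r}\,\delta(\Delta X)_{r,t}-\hat\delta(\Delta Y')_{s,r}\,\mathbb{X}_{r,t}-\hat\delta\bar Y'_{s,r}\,\Delta\mathbb{X}_{r,t}$, while for the unconditional bound at regularity $\gamma-\beta$ one replaces $\hat R^Y$ by $\hat\delta Y-S_{\cdot,\cdot}Y'\delta X$ and expands, exactly as in Theorem \ref{rsci}. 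Invoking Propositions \ref{asp}, \ref{ne}, \ref{de}, the distance inequalities \eqref{dei} and \eqref{des}, and the a priori bounds $|\mathbf{X}|_\alpha,|\bar{\mathbf{X}}|_\alpha,\|Y,Y'\|_{\mathbf{D}_X^{\beta,\beta'}L_m\mathcal{H}_\gamma},\|\bar Y,\bar Y'\|_{\mathbf{D}_{\bar X}^{\beta,\beta'}L_m\mathcal{H}_\gamma}\le R$, each of the four groups carries exactly one ``difference'' factor — contributing $\rho_\alpha(\mathbf{X},\bar{\mathbf{X}})$, $\|\Delta Y_0\|_{m,\gamma}$, or $\|Y,Y';\bar Y,\bar Y'\|_{\mathbf{D}_{X,\bar X}^{\beta,\beta'}L_m\mathcal{H}_\gamma}$ — times a prefactor polynomial in $R$. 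This yields $\|\Delta\Lambda_{s,r,t}\|_{m,\gamma-\beta}\lesssim|t-s|^{1/2}|t-r|^{\alpha+\beta-1/2}$ and $\|\mathbb{E}_s\Delta\Lambda_{s,r,t}\|_{m,\gamma-\beta-\beta'}\lesssim|t-s|\,|t-r|^{\alpha+\beta+\beta'-1}$ (with hidden prefactors as above). Applying Lemma \ref{mssl} with $z_1=\alpha+\beta$, $z_2=\alpha+\beta+\beta'$, and $\gamma$ replaced by $\gamma-\beta-\beta'$ then gives, for every $\eta\in[0,\alpha+\beta+\beta')$, the estimate $\|\hat\delta(\Delta Z)_{s,t}-\Delta A_{s,t}\|_{m,\gamma+\eta-\beta-\beta'}\lesssim|t-s|^{\alpha+\beta+\beta'-\eta}$.

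It then remains to bound the three constituents of $\|\Delta Z,\Delta Y;\ldots\|_{\mathbf{D}_{X,\bar X}^{\beta,\beta'}L_m\mathcal{H}_{\gamma+\theta}}$ for $\theta\in[0,\beta']$. For $\|\Delta Z\|_{E^\beta L_m\mathcal{H}_{\gamma+\theta}}$ one uses $\|\Delta A_{s,t}\|_{m,\gamma+\theta}\lesssim|t-s|^{\alpha-\theta}$ (Proposition \ref{asp}, splitting the $\delta X$ and $\mathbb{X}$ contributions as for \eqref{rscie2}) together with the $\eta=\beta+\beta'+\theta$ case above to get $\|\hat\delta\Delta Z\|_{\alpha-\theta,m,\gamma+\theta}\lesssim(\cdots)$, then $\Delta Z_0=0$ and Proposition \ref{ne}, exactly as in the derivation of \eqref{rscie4}. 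For $\|\Delta Y\|_{E^{\beta'}L_m\mathcal{H}_{\gamma+\theta-\beta}}$ one argues verbatim as for \eqref{rscie5}, using Proposition \ref{in} to interpolate down from $E^\beta L_m\mathcal{H}_\gamma$ and the bound $\|\Delta Y\|_{0,m,\gamma+\theta-\beta}\lesssim\|\Delta Y_0\|_{m,\gamma}+T^{\beta-\theta}\|Y,Y';\bar Y,\bar Y'\|_{\mathbf{D}_{X,\bar X}^{\beta,\beta'}L_m\mathcal{H}_\gamma}$. For the remainder term one writes $\hat R^Z_{s,t}=(\hat\delta Z_{s,t}-A_{s,t})+S_{s,t}Y_s'\mathbb{X}_{s,t}$ and likewise for $\bar Z$, takes the difference, controls $\hat\delta\Delta Z-\Delta A$ by the $\eta=\theta$ case and $S_{s,t}(\Delta Y_s'\mathbb{X}_{s,t}+\bar Y_s'\Delta\mathbb{X}_{s,t})$ by Proposition \ref{asp}, yielding $\|\mathbb{E}_\cdot\Delta\hat R^Z\|_{\beta+\beta',m,\gamma+\theta-\beta-\beta'}\lesssim T^{\alpha-\beta'}(\cdots)$, and finally passes from $\hat R^Z$ to $R^Z$ via \eqref{des} using the bounds on $\|\Delta Z\|_{E^\beta}$ and $\|\Delta Y\|_{E^{\beta'}}$ just obtained. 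Collecting terms and bookkeeping the powers of $T$ produces the prefactor $T^{(\alpha-\beta)\wedge(\beta-\beta')}$ on the $\|Y,Y';\bar Y,\bar Y'\|$ term, which is \eqref{rscse1}.

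The main obstacle is purely organizational: carrying out the Leibniz expansion of $\Delta\Lambda$ so that in \emph{every} summand precisely one factor is a difference (hence small) while the complementary factors stay uniformly bounded by $R$, and simultaneously tracking the $T$-exponents through Propositions \ref{asp}, \ref{in}, \ref{ne} and \ref{de} so that the final power is exactly $(\alpha-\beta)\wedge(\beta-\beta')$. No analytic input beyond the mild stochastic sewing lemma and the estimates already used for Theorem \ref{rsci} is needed, and — thanks to the linearity remark in the first paragraph — no difference version of Lemma \ref{mssl} has to be developed.
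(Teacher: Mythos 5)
Your proposal is correct and follows essentially the same route as the paper's proof: apply the mild stochastic sewing lemma directly to the difference germ $\Delta A$ (identified with $\Delta Z$ by linearity/uniqueness, which the paper leaves implicit), establish the two germ estimates via a Leibniz expansion in which each summand carries exactly one difference factor, and then reassemble the three components of the controlled-rough-path distance exactly as in the derivation of \eqref{rscie3}. The only deviations are cosmetic (your Leibniz split places the barred factors on the opposite side from the paper's, which is equally valid).
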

\begin{proof}
	Like defining $A$ by \eqref{ad}, define 
	\begin{equation*}
		\bar{A}_{s, t}:=S_{s, t}\left(\bar{Y}_{s}\delta \bar{X}_{s, t}+\bar{Y}_{s}^{\prime}\bar{\mathbb{X}}_{s, t}\right), \quad \forall \left(s, t\right) \in \Delta_{2}.
	\end{equation*}
	In view of Chen's relation \eqref{Chen}, we have 
	\begin{equation*}
		\hat{\delta}\Delta A_{s, r, t}=-S_{r, t}\left(\hat{R}_{s, r}^{Y}\delta \Delta X_{r, t}+\hat{\delta}Y_{s, r}^{\prime}\Delta\mathbb{X}_{r, t}+\Delta\hat{R}_{s, r}^{Y}\delta \bar{X}_{r, t}+\hat{\delta}\Delta Y_{s, r}^{\prime}\bar{\mathbb{X}}_{r, t}\right), \quad \forall \left(s, r, t\right) \in \Delta_{3}.
	\end{equation*}
	By Propositions \ref{asp} and \ref{ne}, in view of \eqref{des} we have 
	\begin{align*}
		&\left\|\hat{R}_{s, r}^{Y}\delta \Delta X_{r, t}+\hat{\delta}Y_{s, r}^{\prime}\Delta\mathbb{X}_{r, t}+\Delta\hat{R}_{s, r}^{Y}\delta \bar{X}_{r, t}+\hat{\delta}\Delta Y_{s, r}^{\prime}\bar{\mathbb{X}}_{r, t}\right\|_{m, \gamma-\beta}\\
		&\lesssim \left(\left\|\hat{\delta}Y_{s, r}\right\|_{m, \gamma-\beta}+\left\|Y_{s}^{\prime}\right\|_{m, \gamma-\beta}\left|\delta X_{s, r}\right|\right)\left|\delta \Delta X_{r, t}\right|+\left\|\hat{\delta} Y_{s, r}^{\prime}\right\|_{m, \gamma-\beta}\left|\Delta \mathbb{X}_{r, t}\right|\\
		&\quad+\left(\left\|\hat{\delta}\Delta Y_{s, r}\right\|_{m, \gamma-\beta}+\left\|Y_{s}^{\prime}\right\|_{m, \gamma-\beta}\left|\delta \Delta X_{s, r}\right|+\left\|\Delta Y_{s}^{\prime}\right\|_{m, \gamma-\beta}\left|\delta \bar{X}_{s, r}\right|\right)\left|\delta \bar{X}_{r, t}\right|+\left\|\hat{\delta} \Delta Y_{s, r}^{\prime}\right\|_{m, \gamma-\beta}\left|\bar{\mathbb{X}}_{r, t}\right|\\
		&\lesssim \left(\rho_{\alpha}\left(\mathbf{X}, \bar{\mathbf{X}}\right)+\left\|Y, Y^{\prime}; \bar{Y}, \bar{Y}^{\prime}\right\|_{\mathbf{D}_{X, \bar{X}}^{\beta, \beta^{\prime}}L_{m}\mathcal{H}_{\gamma}}\right)\left|t-s\right|^{\frac{1}{2}}\left|t-r\right|^{\alpha+\beta-\frac{1}{2}},
	\end{align*}
	\begin{align*}
		&\left\|\mathbb{E}_{s}\left(\hat{R}_{s, r}^{Y}\delta \Delta X_{r, t}+\hat{\delta}Y_{s, r}^{\prime}\Delta\mathbb{X}_{r, t}+\Delta\hat{R}_{s, r}^{Y}\delta \bar{X}_{r, t}+\hat{\delta}\Delta Y_{s, r}^{\prime}\bar{\mathbb{X}}_{r, t}\right)\right\|_{m, \gamma-\beta-\beta^{\prime}}\\
		&\lesssim \left(\left\|\mathbb{E}_{s}R_{s, r}^{Y}\right\|_{m, \gamma-\beta-\beta^{\prime}}+\left\|R_{s, r}^{Y}-\hat{R}_{s, r}^{Y}\right\|_{m, \gamma-\beta-\beta^{\prime}}\right)\left|\delta \Delta X_{r, t}\right|+\left\|\hat{\delta} Y_{s, r}^{\prime}\right\|_{m, \gamma-\beta-\beta^{\prime}}\left|\Delta \mathbb{X}_{r, t}\right|\\
		&\quad+\left(\left\|\mathbb{E}_{s}\Delta R_{s, r}^{Y}\right\|_{m, \gamma-\beta-\beta^{\prime}}+\left\|\Delta R_{s, r}^{Y}-\Delta\hat{R}_{s, r}^{Y}\right\|_{m, \gamma-\beta-\beta^{\prime}}\right)\left|\delta \bar{X}_{r, t}\right|+\left\|\hat{\delta} \Delta Y_{s, r}^{\prime}\right\|_{m, \gamma-\beta-\beta^{\prime}}\left|\bar{\mathbb{X}}_{r, t}\right|\\
		&\lesssim \left(\rho_{\alpha}\left(\mathbf{X}, \bar{\mathbf{X}}\right)+\left\|Y, Y^{\prime}; \bar{Y}, \bar{Y}^{\prime}\right\|_{\mathbf{D}_{X, \bar{X}}^{\beta, \beta^{\prime}}L_{m}\mathcal{H}_{\gamma}}\right)\left|t-s\right|\left|t-r\right|^{\alpha+\beta+\beta^{\prime}-1}, \quad \forall \left(s, r, t\right) \in \Delta_{3}.
	\end{align*}
	Then by Lemma \ref{mssl}, for every $\eta \in \left[0, \alpha+\beta+\beta^{\prime}\right)$ 
	and $\left(s, t\right) \in \Delta_{2}$ we have 
	\begin{equation} \label{rscse2}
		\left\|\hat{\delta}\Delta Z_{s, t}-\Delta A_{s, t}\right\|_{m, \gamma+\eta-\beta-\beta^{\prime}} \lesssim \left(\rho_{\alpha}\left(\mathbf{X}, \bar{\mathbf{X}}\right)+\left\|Y, Y^{\prime}; \bar{Y}, \bar{Y}^{\prime}\right\|_{\mathbf{D}_{X, \bar{X}}^{\beta, \beta^{\prime}}L_{m}\mathcal{H}_{\gamma}}\right)\left|t-s\right|^{\alpha+\beta+\beta^{\prime}-\eta}.
	\end{equation}
	By Proposition \ref{asp}, for every $\theta \in \left[0, \beta^{\prime}\right]$ we have 
	\begin{align*}
		\left\|\Delta A_{s, t}\right\|_{m, \gamma+\theta} &\lesssim \left|t-s\right|^{-\theta}\left\|Y_{s}\right\|_{m, \gamma}\left|\delta \Delta X_{s, t}\right|+\left|t-s\right|^{-\left(\beta+\theta\right)}\left\|Y_{s}^{\prime}\right\|_{m, \gamma-\beta}\left|\Delta \mathbb{X}_{s, t}\right|\\
		&\quad+\left|t-s\right|^{-\theta}\left\|\Delta Y_{s}\right\|_{m, \gamma}\left|\delta \bar{X}_{s, t}\right|+\left|t-s\right|^{-\left(\beta+\theta\right)}\left\|\Delta Y_{s}^{\prime}\right\|_{m, \gamma-\beta}\left|\Delta \bar{\mathbb{X}}_{s, t}\right|\\
		&\lesssim \left(\rho_{\alpha}\left(\mathbf{X}, \bar{\mathbf{X}}\right)+\left\|Y, Y^{\prime}; \bar{Y}, \bar{Y}^{\prime}\right\|_{\mathbf{D}_{X, \bar{X}}^{\beta, \beta^{\prime}}L_{m}\mathcal{H}_{\gamma}}\right)\left|t-s\right|^{\alpha-\theta}, \quad \forall \left(s, t\right) \in \Delta_{2}.
	\end{align*}
	Hence, 
	\begin{equation*}
		\left\|\hat{\delta}\Delta Z\right\|_{\alpha-\theta, m, \gamma+\theta} \lesssim \rho_{\alpha}\left(\mathbf{X}, \bar{\mathbf{X}}\right)+\left\|Y, Y^{\prime}; \bar{Y}, \bar{Y}^{\prime}\right\|_{\mathbf{D}_{X, \bar{X}}^{\beta, \beta^{\prime}}L_{m}\mathcal{H}_{\gamma}}.
	\end{equation*}
	Analogous to the proof of estimates \eqref{rscie4} and \eqref{rscie5}, we have 
	\begin{equation} \label{rscse3}
		\left\|\Delta Z\right\|_{E^{\beta}L_{m}\mathcal{H}_{\gamma+\theta}} \lesssim \rho_{\alpha}\left(\mathbf{X}, \bar{\mathbf{X}}\right)+T^{\alpha-\beta}\left\|Y, Y^{\prime}; \bar{Y}, \bar{Y}^{\prime}\right\|_{\mathbf{D}_{X, \bar{X}}^{\beta, \beta^{\prime}}L_{m}\mathcal{H}_{\gamma}},
	\end{equation}
	\begin{equation} \label{rscse4}
		\left\|\Delta Y\right\|_{E^{\beta^{\prime}}L_{m}\mathcal{H}_{\gamma+\theta-\beta}} \lesssim \rho_{\alpha}\left(\mathbf{X}, \bar{\mathbf{X}}\right)+\left\|\Delta Y_{0}\right\|_{m, \gamma}+T^{\beta-\beta^{\prime}}\left\|Y, Y^{\prime}; \bar{Y}, \bar{Y}^{\prime}\right\|_{\mathbf{D}_{X, \bar{X}}^{\beta, \beta^{\prime}}L_{m}\mathcal{H}_{\gamma}}.
	\end{equation}
	At last, by Proposition \ref{asp}, for every $\left(s, t\right) \in \Delta_{2}$ we have 
	\begin{align*}
		\left\|S_{s, t}\left(Y_{s}^{\prime}\Delta\mathbb{X}_{s, t}+\Delta Y_{s}^{\prime}\bar{\mathbb{X}}_{s, t}\right)\right\|_{m, \gamma+\theta-\beta-\beta^{\prime}} &\lesssim \left\|Y_{s}^{\prime}\right\|_{m, \gamma-\beta}\left|\Delta\mathbb{X}_{s, t}\right|+\left\|\Delta Y_{s}^{\prime}\right\|_{m, \gamma-\beta}\left|\bar{\mathbb{X}}_{s, t}\right|\\
		&\lesssim \left(\rho_{\alpha}\left(\mathbf{X}, \bar{\mathbf{X}}\right)+\left\|Y, Y^{\prime}; \bar{Y}, \bar{Y}^{\prime}\right\|_{\mathbf{D}_{X, \bar{X}}^{\beta, \beta^{\prime}}L_{m}\mathcal{H}_{\gamma}}\right)\left|t-s\right|^{2\alpha}.
	\end{align*}
	In view of \eqref{rscse2}, we have 
	\begin{align*}
		\left\|\Delta \hat{R}_{s, t}^{Z}\right\|_{m, \gamma+\theta-\beta-\beta^{\prime}} &\lesssim \left\|\hat{\delta}\Delta Z_{s, t}-\Delta A_{s, t}\right\|_{m, \gamma+\theta-\beta-\beta^{\prime}}+\left\|S_{s, t}\left(Y_{s}^{\prime}\Delta\mathbb{X}_{s, t}+\Delta Y_{s}^{\prime}\bar{\mathbb{X}}_{s, t}\right)\right\|_{m, \gamma+\theta-\beta-\beta^{\prime}}\\
		&\lesssim \left(\rho_{\alpha}\left(\mathbf{X}, \bar{\mathbf{X}}\right)+T^{\alpha-\beta^{\prime}}\left\|Y, Y^{\prime}; \bar{Y}, \bar{Y}^{\prime}\right\|_{\mathbf{D}_{X, \bar{X}}^{\beta, \beta^{\prime}}L_{m}\mathcal{H}_{\gamma}}\right)\left|t-s\right|^{\beta+\beta^{\prime}}, \quad \forall \left(s, t\right) \in \Delta_{2},
	\end{align*}
	which gives 
	\begin{equation*}
		\left\|\mathbb{E}_{\cdot}\Delta\hat{R}^{Y}\right\|_{\beta+\beta^{\prime}, m, \gamma+\theta-\beta-\beta^{\prime}} \lesssim \rho_{\alpha}\left(\mathbf{X}, \bar{\mathbf{X}}\right)+T^{\alpha-\beta^{\prime}}\left\|Y, Y^{\prime}; \bar{Y}, \bar{Y}^{\prime}\right\|_{\mathbf{D}_{X, \bar{X}}^{\beta, \beta^{\prime}}L_{m}\mathcal{H}_{\gamma}}.
	\end{equation*}
	Combining \eqref{des}, \eqref{rscse3}, \eqref{rscse4} and the last inequality, 
	we get the estimate \eqref{rscse1}. 
\end{proof}
\subsection{Stochastic controlled operator families}
Next, we introduce stochastic controlled operator families according to a monotone family. 
\begin{definition}
	Given $X \in C^{\alpha}\left(\left[0, T\right], \mathbb{R}^{e}\right)$, 
	we call $\left(G, G^{\prime}\right)$ an (essentially bounded)
	$\left(\beta, \beta^{\prime}\right)$-Hölder 
	stochastic controlled operator family (controlled by $X$) 
	from $\mathcal{H}_{\gamma_{1}}^{e_{1}}$ to $\mathcal{H}_{\gamma_{2}}^{e_{2}}$, 
	denoted by $\left(G, G^{\prime}\right) \in \mathbf{D}_{X}^{\beta, \beta^{\prime}}L_{\infty}\mathcal{L}_{\gamma_{1}, \gamma_{2}}\left(\mathcal{H}^{e_{1}}, \mathcal{H}^{e_{2}}\right)$, 
	if $G \in E^{\beta}L_{\infty}\mathcal{L}_{\gamma_{1}, \gamma_{2}}\left(\mathcal{H}^{e_{1}}, \mathcal{H}^{e_{2}}\right) \cap E^{\beta^{\prime}}L_{\infty}\mathcal{L}_{\gamma_{1}-\beta, \gamma_{2}-\beta}\left(\mathcal{H}^{e_{1}}, \mathcal{H}^{e_{2}}\right)$, 
	$G^{\prime} \in E^{\beta^{\prime}}L_{\infty}\mathcal{L}_{\gamma_{1}, \gamma_{2}-\beta}\left(\mathcal{H}^{e_{1}}, \mathcal{H}^{e_{2} \times e}\right)$ and 
	$\mathbb{E}_{\cdot}R^{G} \in C_{2}^{\beta+\beta^{\prime}}L_{\infty}\mathcal{L}\left(\mathcal{H}_{\gamma_{1}}^{e_{1}}, \mathcal{H}_{\gamma_{2}-\beta-\beta^{\prime}}^{e_{2}}\right)$, 
	where the remainder $R^{G}$ is defined by 
	\begin{equation*}
		R_{s, t}^{G}:=\delta G_{s, t}-G_{s}^{\prime}\delta X_{s, t}, \quad \forall \left(s, t\right) \in \Delta_{2}.
	\end{equation*}
\end{definition}
Equipped with the norm 
\begin{align*}
	\left\|G, G^{\prime}\right\|_{\mathbf{D}_{X}^{\beta, \beta^{\prime}}L_{\infty}\mathcal{L}_{\gamma_{1}, \gamma_{2}}}&:=\left\|G\right\|_{E^{\beta}L_{\infty}\mathcal{L}_{\gamma_{1}, \gamma_{2}}}+\left\|G\right\|_{E^{\beta^{\prime}}L_{\infty}\mathcal{L}_{\gamma_{1}-\beta, \gamma_{2}-\beta}}\\
	&\quad+\left\|G^{\prime}\right\|_{E^{\beta^{\prime}}L_{\infty}\mathcal{L}_{\gamma_{1}, \gamma_{2}-\beta}}+\left\|\mathbb{E}_{\cdot}R^{G}\right\|_{\beta+\beta^{\prime}, \infty, \left(\gamma_{1}, \gamma_{2}-\beta-\beta^{\prime}\right)\text{-}op},
\end{align*}
$\mathbf{D}_{X}^{\beta, \beta^{\prime}}L_{\infty}\mathcal{L}_{\gamma_{1}, \gamma_{2}}\left(\mathcal{H}^{e_{1}}, \mathcal{H}^{e_{2}}\right)$ is a Banach space. 
The following result shows that the composition of a stochastic controlled rough path 
and an operator family (according to the same monotone family) is again a stochastic controlled rough path. 
\begin{proposition} \label{lc}
	Let $\left(Y, Y^{\prime}\right) \in \mathbf{D}_{X}^{\beta, \beta^{\prime}}L_{m}\mathcal{H}_{\gamma_{1}}^{e_{1}}$ 
	and $\left(G, G^{\prime}\right) \in \mathbf{D}_{X}^{\beta, \beta^{\prime}}L_{\infty}\mathcal{L}_{\gamma_{1}, \gamma_{2}}\left(\mathcal{H}^{e_{1}}, \mathcal{H}^{e_{2}}\right)$. 
	Then we have $\left(GY, GY^{\prime}+G^{\prime}Y\right) \in \mathbf{D}_{X}^{\beta, \beta^{\prime}}L_{m}\mathcal{H}_{\gamma_{2}}^{e_{2}}$ and 
	\begin{equation} \label{lce1}
		\left\|G_{0}Y_{0}\right\|_{m, \gamma_{2}} \lesssim \left\|G, G^{\prime}\right\|_{\mathbf{D}_{X}^{\beta, \beta^{\prime}}L_{\infty}\mathcal{L}_{\gamma_{1}, \gamma_{2}}}\left\|Y_{0}\right\|_{m, \gamma_{1}},
	\end{equation}
	\begin{equation} \label{lce2}
		\left\|GY, GY^{\prime}+G^{\prime}Y\right\|_{\mathbf{D}_{X}^{\beta, \beta^{\prime}}L_{m}\mathcal{H}_{\gamma_{2}}} \lesssim \left\|G, G^{\prime}\right\|_{\mathbf{D}_{X}^{\beta, \beta^{\prime}}L_{\infty}\mathcal{L}_{\gamma_{1}, \gamma_{2}}}\left\|Y, Y^{\prime}\right\|_{\mathbf{D}_{X}^{\beta, \beta^{\prime}}L_{m}\mathcal{H}_{\gamma_{1}}}.
	\end{equation}
\end{proposition}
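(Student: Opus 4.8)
The plan is to verify directly the three conditions of Definition~\ref{scrp} for the pair $(GY,\,GY'+G'Y)$ at level $\mathcal{H}_{\gamma_2}^{e_2}$, keeping track of constants so as to produce \eqref{lce1}--\eqref{lce2}. Measurability and adaptedness of $GY$ and of $GY'+G'Y$ are immediate, being pointwise products of measurable adapted processes, and \eqref{lce1} is nothing but $|G_0Y_0|_{\gamma_2}\le|G_0|_{(\gamma_1,\gamma_2)\text{-}op}|Y_0|_{\gamma_1}$ followed by $\|\cdot\|_m$. For the first regularity requirement I would invoke Proposition~\ref{lcp} with $f=G\in E^{\beta}L_\infty\mathcal{L}_{\gamma_1,\gamma_2}(\mathcal{H}^{e_1},\mathcal{H}^{e_2})$ and $Y\in E^{\beta}L_m\mathcal{H}_{\gamma_1}^{e_1}$, obtaining $GY\in E^{\beta}L_m\mathcal{H}_{\gamma_2}^{e_2}$ together with $\|GY\|_{E^{\beta}L_m\mathcal{H}_{\gamma_2}}\lesssim\|G\|_{E^{\beta}L_\infty\mathcal{L}_{\gamma_1,\gamma_2}}\|Y\|_{E^{\beta}L_m\mathcal{H}_{\gamma_1}}$.

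For the Gubinelli derivative $GY'+G'Y\in E^{\beta'}L_m\mathcal{H}_{\gamma_2-\beta}^{e_2\times e}$ I would apply Proposition~\ref{lcp} twice at level $\beta'$: once to $GY'$, using that the definition of a stochastic controlled operator family includes $G\in E^{\beta'}L_\infty\mathcal{L}_{\gamma_1-\beta,\gamma_2-\beta}(\mathcal{H}^{e_1},\mathcal{H}^{e_2})$ and that $Y'\in E^{\beta'}L_m\mathcal{H}_{\gamma_1-\beta}^{e_1\times e}$; and once to $G'Y$, using $G'\in E^{\beta'}L_\infty\mathcal{L}_{\gamma_1,\gamma_2-\beta}(\mathcal{H}^{e_1},\mathcal{H}^{e_2\times e})$ together with $Y\in E^{\beta}L_m\mathcal{H}_{\gamma_1}^{e_1}\subset E^{\beta'}L_m\mathcal{H}_{\gamma_1}^{e_1}$ (Proposition~\ref{in} when $\beta'<\beta$, trivial when $\beta'=\beta$). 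Adding the two bounds gives $\|GY'+G'Y\|_{E^{\beta'}L_m\mathcal{H}_{\gamma_2-\beta}}\lesssim\|G,G'\|_{\mathbf{D}_X^{\beta,\beta'}L_\infty\mathcal{L}_{\gamma_1,\gamma_2}}\|Y,Y'\|_{\mathbf{D}_X^{\beta,\beta'}L_m\mathcal{H}_{\gamma_1}}$.

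The core of the proof is the remainder condition $\mathbb{E}_\cdot R^{GY}\in C_2^{\beta+\beta'}L_m\mathcal{H}_{\gamma_2-\beta-\beta'}^{e_2}$, where $R^{GY}_{s,t}=\delta(GY)_{s,t}-(G_sY'_s+G'_sY_s)\delta X_{s,t}$. Writing $\delta(GY)_{s,t}=G_s\,\delta Y_{s,t}+\delta G_{s,t}\,Y_s+\delta G_{s,t}\,\delta Y_{s,t}$ and inserting $\delta Y_{s,t}=Y'_s\delta X_{s,t}+R^Y_{s,t}$ and $\delta G_{s,t}=G'_s\delta X_{s,t}+R^G_{s,t}$, the two first-order $\delta X$-terms exactly cancel the correction $(G_sY'_s+G'_sY_s)\delta X_{s,t}$, leaving the identity
\begin{equation*}
	R^{GY}_{s,t}=G_sR^Y_{s,t}+R^G_{s,t}Y_s+\delta G_{s,t}\,\delta Y_{s,t}.
\end{equation*}
I would then apply $\mathbb{E}_s$ termwise. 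The first two terms factor: $G_s$ is $\mathcal{F}_s$-measurable and essentially bounded, so $\mathbb{E}_s[G_sR^Y_{s,t}]=G_s(\mathbb{E}_\cdot R^Y)_{s,t}$, controlled in $\|\cdot\|_{m,\gamma_2-\beta-\beta'}$ by $\|G\|_{E^{\beta'}L_\infty\mathcal{L}_{\gamma_1-\beta,\gamma_2-\beta}}\,\|\mathbb{E}_\cdot R^Y\|_{\beta+\beta',m,\gamma_1-\beta-\beta'}\,|t-s|^{\beta+\beta'}$ (using that the $E^{\beta'}$-structure of $G$ makes $G$ also act from $\mathcal{H}_{\gamma_1-\beta-\beta'}$ to $\mathcal{H}_{\gamma_2-\beta-\beta'}$); similarly $Y_s$ is $\mathcal{F}_s$-measurable, so $\mathbb{E}_s[R^G_{s,t}Y_s]=(\mathbb{E}_\cdot R^G)_{s,t}Y_s$, controlled by $\|\mathbb{E}_\cdot R^G\|_{\beta+\beta',\infty,(\gamma_1,\gamma_2-\beta-\beta')\text{-}op}\,\|Y\|_{0,m,\gamma_1}\,|t-s|^{\beta+\beta'}$. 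The genuinely mixed term $\delta G_{s,t}\,\delta Y_{s,t}$ carries no $\mathcal{F}_s$-measurable factor and is estimated directly in $L^m$ via $\|\mathbb{E}_s\cdot\|_m\le\|\cdot\|_m$: $\delta G_{s,t}$ maps $\mathcal{H}_{\gamma_1-\beta-\beta'}$ to $\mathcal{H}_{\gamma_2-\beta-\beta'}$ with norm $\lesssim\|G\|_{E^{\beta'}L_\infty\mathcal{L}_{\gamma_1-\beta,\gamma_2-\beta}}|t-s|^{\beta'}$, while by the embedding of the monotone family $\|\delta Y_{s,t}\|_{m,\gamma_1-\beta-\beta'}\le\|\delta Y_{s,t}\|_{m,\gamma_1-\beta}\lesssim\|Y\|_{E^{\beta}L_m\mathcal{H}_{\gamma_1}}|t-s|^{\beta}$, giving the product rate $|t-s|^{\beta+\beta'}$. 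Summing the three bounds, dominating each operator factor by $\|G,G'\|_{\mathbf{D}_X^{\beta,\beta'}L_\infty\mathcal{L}_{\gamma_1,\gamma_2}}$ and each path factor by $\|Y,Y'\|_{\mathbf{D}_X^{\beta,\beta'}L_m\mathcal{H}_{\gamma_1}}$ yields $\mathbb{E}_\cdot R^{GY}\in C_2^{\beta+\beta'}L_m\mathcal{H}_{\gamma_2-\beta-\beta'}^{e_2}$ with the corresponding estimate, and combining with the first two steps gives \eqref{lce2}.

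The step I expect to be the main obstacle, and where care is needed, is the index bookkeeping inside the remainder decomposition: verifying that at each occurrence the operator factor acts between the correct pair of interpolation spaces with the correct Hölder rate --- in particular that $G$ itself (not merely $G'$) is $\beta'$-controlled on the shifted scale $\mathcal{H}_{\gamma_1-\beta}\to\mathcal{H}_{\gamma_2-\beta}$, which is precisely why this is imposed in the definition --- and that the two ``factor-out'' terms are allowed to use the weaker conditional-remainder hypotheses on $\mathbb{E}_\cdot R^Y$ and $\mathbb{E}_\cdot R^G$, whereas the mixed term must be handled using only the pathwise $\beta$- and $\beta'$-Hölder bounds. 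The remainder is a routine combination of the interpolation inequality \eqref{ii}, Hölder's inequality, Propositions~\ref{in} and~\ref{lcp}, and the contraction property of $\mathbb{E}_s$ on $L^m$.
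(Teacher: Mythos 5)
Your proof is correct and follows essentially the same route as the paper's: the same algebraic decomposition $R^{GY}_{s,t}=G_sR^{Y}_{s,t}+R^{G}_{s,t}Y_s+\delta G_{s,t}\,\delta Y_{s,t}$, the same applications of Propositions \ref{lcp} and \ref{in} for the path and Gubinelli-derivative components, and the same termwise treatment of $\mathbb{E}_s$ (factoring out the $\mathcal{F}_s$-measurable $G_s$ and $Y_s$, bounding the mixed term pathwise). The only cosmetic difference is that for $\delta G_{s,t}\,\delta Y_{s,t}$ you measure $\delta G$ in the $\left(\gamma_1-\beta-\beta^{\prime}, \gamma_2-\beta-\beta^{\prime}\right)$-operator norm with rate $\beta^{\prime}$, giving exactly $\left|t-s\right|^{\beta+\beta^{\prime}}$, whereas the paper measures it in the $\left(\gamma_1-\beta, \gamma_2-\beta\right)$-operator norm with rate $\beta$, giving $\left|t-s\right|^{2\beta}$ and absorbing the excess since $\beta \geq \beta^{\prime}$; both are valid.
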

\begin{proof}
	Clearly, we have 
	\begin{equation*}
		\left\|G_{0}Y_{0}\right\|_{m, \gamma_{2}} \lesssim \left\|G_{0}\right\|_{\infty, \left(\gamma_{1}, \gamma_{2}\right)\text{-}op}\left\|Y_{0}\right\|_{m, \gamma_{1}} \lesssim \left\|G, G^{\prime}\right\|_{\mathbf{D}_{X}^{\beta, \beta^{\prime}}L_{\infty}\mathcal{L}_{\gamma_{1}, \gamma_{2}}}\left\|Y_{0}\right\|_{m, \gamma_{1}}.
	\end{equation*}
	Since $Y \in E^{\beta}L_{m}\mathcal{H}_{\gamma_{1}}^{e_{1}}$ 
	and $G \in E^{\beta}L_{\infty}\mathcal{L}_{\gamma_{1}, \gamma_{2}}\left(\mathcal{H}^{e_{1}}, \mathcal{H}^{e_{2}}\right)$, 
	by Proposition \ref{lcp} we have $GY \in E^{\beta}L_{m}\mathcal{H}_{\gamma_{2}}^{e_{2}}$ and 
	\begin{equation*}
		\left\|GY\right\|_{E^{\beta}L_{m}\mathcal{H}_{\gamma_{2}}} \lesssim \left\|G\right\|_{E^{\beta}L_{\infty}\mathcal{L}_{\gamma_{1}, \gamma_{2}}}\left\|Y\right\|_{E^{\beta}L_{m}\mathcal{H}_{\gamma_{1}}}.
	\end{equation*}
	In a similar way, by Proposition \ref{lcp}, $Y^{\prime} \in E^{\beta^{\prime}}L_{m}\mathcal{H}_{\gamma_{1}-\beta}^{e_{1} \times e}$ and 
	$G \in E^{\beta^{\prime}}L_{\infty}\mathcal{L}_{\gamma_{1}-\beta, \gamma_{2}-\beta}\left(\mathcal{H}^{e_{1}}, \mathcal{H}^{e_{2}}\right) \subset E^{\beta^{\prime}}L_{\infty}\mathcal{L}_{\gamma_{1}-\beta, \gamma_{2}-\beta}\left(\mathcal{H}^{e_{1} \times e}, \mathcal{H}^{e_{2} \times e}\right)$ 
	ensure $GY^{\prime} \in E^{\beta^{\prime}}L_{m}\mathcal{H}_{\gamma_{2}-\beta}^{e_{2} \times e}$ and 
	\begin{equation*}
		\left\|GY^{\prime}\right\|_{E^{\beta^{\prime}}L_{m}\mathcal{H}_{\gamma_{2}-\beta}} \lesssim \left\|G\right\|_{E^{\beta^{\prime}}L_{\infty}\mathcal{L}_{\gamma_{1}-\beta, \gamma_{2}-\beta}}\left\|Y^{\prime}\right\|_{E^{\beta^{\prime}}L_{m}\mathcal{H}_{\gamma_{1}-\beta}}.
	\end{equation*}
	By Proposition \ref{in}, we have 
	$Y \in E^{\beta}L_{m}\mathcal{H}_{\gamma_{1}}^{e_{1}} \subset E^{\beta^{\prime}}L_{m}\mathcal{H}_{\gamma_{1}}^{e_{1}}$ and 
	\begin{equation*}
		\left\|Y\right\|_{E^{\beta^{\prime}}L_{m}\mathcal{H}_{\gamma_{1}}} \lesssim \left\|Y\right\|_{E^{\beta}L_{m}\mathcal{H}_{\gamma_{1}}}.
	\end{equation*}
	Combined with $G^{\prime} \in E^{\beta^{\prime}}L_{\infty}\mathcal{L}_{\gamma_{1}, \gamma_{2}-\beta}\left(\mathcal{H}^{e_{1}}, \mathcal{H}^{e_{2} \times e}\right)$, 
	by Proposition \ref{lcp} we have $G^{\prime}Y \in E^{\beta^{\prime}}L_{m}\mathcal{H}_{\gamma_{2}-\beta}^{e_{2} \times e}$ and 
	\begin{equation*}
		\left\|G^{\prime}Y\right\|_{E^{\beta^{\prime}}L_{m}\mathcal{H}_{\gamma_{2}-\beta}} \lesssim \left\|G^{\prime}\right\|_{E^{\beta^{\prime}}L_{\infty}\mathcal{L}_{\gamma_{1}, \gamma_{2}-\beta}}\left\|Y\right\|_{E^{\beta^{\prime}}L_{m}\mathcal{H}_{\gamma_{1}}} \lesssim \left\|G^{\prime}\right\|_{E^{\beta^{\prime}}L_{\infty}\mathcal{L}_{\gamma_{1}, \gamma_{2}-\beta}}\left\|Y\right\|_{E^{\beta}L_{m}\mathcal{H}_{\gamma_{1}}}.
	\end{equation*}
	Since 
	\begin{equation*}
		R_{s, t}^{GY}=G_{t}Y_{t}-G_{s}Y_{s}-\left(G_{s}Y_{s}^{\prime}+G_{s}^{\prime}Y_{s}\right)\delta X_{s, t}=\delta G_{s, t}\delta Y_{s, t}+R_{s, t}^{G}Y_{s}+G_{s}R_{s, t}^{Y}, \quad \forall \left(s, t\right) \in \Delta_{2},
	\end{equation*}
	we have 
	\begin{align*}
		\left\|\mathbb{E}_{s}R_{s, t}^{GY}\right\|_{m, \gamma_{2}-\beta-\beta^{\prime}} &\lesssim \left\|\delta G_{s, t}\right\|_{\infty, \left(\gamma_{1}-\beta, \gamma_{2}-\beta\right)\text{-}op}\left\|\delta Y_{s, t}\right\|_{m, \gamma_{1}-\beta}+\left\|\mathbb{E}_{s}R_{s, t}^{G}\right\|_{\infty, \left(\gamma_{1}, \gamma_{2}-\beta-\beta^{\prime}\right)\text{-}op}\left\|Y_{s}\right\|_{m, \gamma_{1}}\\
		&\quad+\left\|G_{s}\right\|_{\infty, \left(\gamma_{1}-\beta-\beta^{\prime}, \gamma_{2}-\beta-\beta^{\prime}\right)\text{-}op}\left\|\mathbb{E}_{s}R_{s, t}^{Y}\right\|_{m, \gamma_{1}-\beta-\beta^{\prime}}\\
		&\lesssim \left\|G, G^{\prime}\right\|_{\mathbf{D}_{X}^{\beta, \beta^{\prime}}L_{\infty}\mathcal{L}_{\gamma_{1}, \gamma_{2}}}\left\|Y, Y^{\prime}\right\|_{\mathbf{D}_{X}^{\beta, \beta^{\prime}}L_{m}\mathcal{H}_{\gamma_{1}}}\left|t-s\right|^{\beta+\beta^{\prime}}, \quad \forall \left(s, t\right) \in \Delta,
	\end{align*}
	which gives $\mathbb{E}_{\cdot}R^{GY} \in C_{2}^{\beta+\beta^{\prime}}L_{m}\mathcal{H}_{\gamma_{2}-\beta-\beta^{\prime}}^{e_{2}}$ and 
	\begin{equation*}
		\left\|\mathbb{E}_{\cdot}R^{GY}\right\|_{\beta+\beta^{\prime}, m, \gamma_{2}-\beta-\beta^{\prime}} \lesssim \left\|G, G^{\prime}\right\|_{\mathbf{D}_{X}^{\beta, \beta^{\prime}}L_{\infty}\mathcal{L}_{\gamma_{1}, \gamma_{2}}}\left\|Y, Y^{\prime}\right\|_{\mathbf{D}_{X}^{\beta, \beta^{\prime}}L_{m}\mathcal{H}_{\gamma_{1}}}.
	\end{equation*}
	Therefore, $\left(GY, GY^{\prime}+G^{\prime}Y\right) \in \mathbf{D}_{X}^{\beta, \beta^{\prime}}L_{m}\mathcal{H}_{\gamma_{2}}^{e_{2}}$ 
	and the estimate \eqref{lce2} holds. 
\end{proof}
We finish this section with the stability of composition. 
For $X, \bar{X} \in C^{\alpha}\left(\left[0, T\right], \mathbb{R}^{e}\right)$, 
$\left(G, G^{\prime}\right) \in \mathbf{D}_{X}^{\beta, \beta^{\prime}}L_{\infty}\mathcal{L}_{\gamma_{1}, \gamma_{2}}\left(\mathcal{H}^{e_{1}}, \mathcal{H}^{e_{2}}\right)$ 
and $\left(\bar{G}, \bar{G}^{\prime}\right) \in \mathbf{D}_{\bar{X}}^{\beta, \beta^{\prime}}L_{\infty}\mathcal{L}_{\gamma_{1}, \gamma_{2}}\left(\mathcal{H}^{e_{1}}, \mathcal{H}^{e_{2}}\right)$, 
we introduce the ``distance" 
\begin{align*}
	\left\|G, G^{\prime}; \bar{G}, \bar{G}^{\prime}\right\|_{\mathbf{D}_{X, \bar{X}}^{\beta, \beta^{\prime}}L_{\infty}\mathcal{L}_{\gamma_{1}, \gamma_{2}}}&:=\left\|\Delta G\right\|_{E^{\beta}L_{\infty}\mathcal{L}_{\gamma_{1}, \gamma_{2}}}+\left\|\Delta G\right\|_{E^{\beta^{\prime}}L_{\infty}\mathcal{L}_{\gamma_{1}-\beta, \gamma_{2}-\beta}}\\
	&\quad+\left\|\Delta G^{\prime}\right\|_{E^{\beta^{\prime}}L_{\infty}\mathcal{L}_{\gamma_{1}, \gamma_{2}-\beta}}+\left\|\mathbb{E}_{\cdot}\Delta R^{G}\right\|_{\beta+\beta^{\prime}, \infty, \left(\gamma_{1}, \gamma_{2}-\beta-\beta^{\prime}\right)\text{-}op}.
\end{align*}
Similarly, this ``distance" is not a metric for $X \neq \bar{X}$. 
The following stability result can be obtained analogous to the proof of 
estimates \eqref{lce1} and \eqref{lce2}. 
\begin{proposition} \label{lcs}
	Let $X, \bar{X} \in C^{\alpha}\left(\left[0, T\right], \mathbb{R}^{e}\right)$, 
	$\left(Y, Y^{\prime}\right) \in \mathbf{D}_{X}^{\beta, \beta^{\prime}}L_{m}\mathcal{H}_{\gamma}^{e_{1}}$, 
	$\left(\bar{Y}, \bar{Y}^{\prime}\right) \in \mathbf{D}_{\bar{X}}^{\beta, \beta^{\prime}}L_{m}\mathcal{H}_{\gamma}^{e_{1}}$, 
	$\left(G, G^{\prime}\right) \in \mathbf{D}_{X}^{\beta, \beta^{\prime}}L_{\infty}\mathcal{L}_{\gamma_{1}, \gamma_{2}}\left(\mathcal{H}^{e_{1}}, \mathcal{H}^{e_{2}}\right)$ 
	and $\left(\bar{G}, \bar{G}^{\prime}\right) \in \mathbf{D}_{\bar{X}}^{\beta, \beta^{\prime}}L_{\infty}\mathcal{L}_{\gamma_{1}, \gamma_{2}}\left(\mathcal{H}^{e_{1}}, \mathcal{H}^{e_{2}}\right)$. 
	Then we have 
	\begin{equation*}
		\left\|\Delta \left(G_{0}Y_{0}\right)\right\|_{m, \gamma_{2}} \lesssim \left\|G, G^{\prime}\right\|_{\mathbf{D}_{X}^{\beta, \beta^{\prime}}L_{\infty}\mathcal{L}_{\gamma_{1}, \gamma_{2}}}\left\|\Delta Y_{0}\right\|_{m, \gamma_{1}}+\left\|G, G^{\prime}; \bar{G}, \bar{G}^{\prime}\right\|_{\mathbf{D}_{X, \bar{X}}^{\beta, \beta^{\prime}}L_{\infty}\mathcal{L}_{\gamma_{1}, \gamma_{2}}}\left\|\bar{Y}_{0}\right\|_{m, \gamma_{1}},
	\end{equation*}
	\begin{align*}
		\left\|GY, GY^{\prime}+G^{\prime}Y; \bar{G}\bar{Y}, \bar{G}\bar{Y}^{\prime}+\bar{G}^{\prime}\bar{Y}\right\|_{\mathbf{D}_{X, \bar{X}}^{\beta, \beta^{\prime}}L_{m}\mathcal{H}_{\gamma_{2}}} &\lesssim \left\|G, G^{\prime}\right\|_{\mathbf{D}_{X}^{\beta, \beta^{\prime}}L_{\infty}\mathcal{L}_{\gamma_{1}, \gamma_{2}}}\left\|Y, Y^{\prime}; \bar{Y}, \bar{Y}^{\prime}\right\|_{\mathbf{D}_{X, \bar{X}}^{\beta, \beta^{\prime}}L_{m}\mathcal{H}_{\gamma_{1}}}\\
		&\quad+\left\|G, G^{\prime}; \bar{G}, \bar{G}^{\prime}\right\|_{\mathbf{D}_{X, \bar{X}}^{\beta, \beta^{\prime}}L_{\infty}\mathcal{L}_{\gamma_{1}, \gamma_{2}}}\left\|\bar{Y}, \bar{Y}^{\prime}\right\|_{\mathbf{D}_{\bar{X}}^{\beta, \beta^{\prime}}L_{m}\mathcal{H}_{\gamma_{1}}}.
	\end{align*}
\end{proposition}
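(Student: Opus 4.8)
The plan is to mimic the proof of Proposition~\ref{lc} line by line, the only new ingredient being the elementary bilinear identity $\Delta(AB)=A\,\Delta B+(\Delta A)\bar B$, which converts each single-factor estimate used there into a sum of two terms, one carrying a ``distance'' factor and one carrying a plain norm. Note that, in contrast with the convolution stability of Theorem~\ref{rscs}, no rough-path metric $\rho_\alpha(\mathbf{X},\bar{\mathbf{X}})$ should enter here: composition is a purely ``pointwise'' operation, and whatever effect $X\neq\bar X$ has is already buried inside the differences $\Delta R^G=R^G-R^{\bar G}$ and $\Delta R^Y=R^Y-R^{\bar Y}$ that appear in the two ``distances''.

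The first estimate is immediate from $\Delta(G_0Y_0)=G_0\,\Delta Y_0+(\Delta G_0)\bar Y_0$, bounding the operator norms $\|G_0\|_{(\gamma_1,\gamma_2)\text{-}op}$ and $\|\Delta G_0\|_{(\gamma_1,\gamma_2)\text{-}op}$ by $\|G,G'\|_{\mathbf{D}_X^{\beta,\beta'}L_\infty\mathcal{L}_{\gamma_1,\gamma_2}}$ and $\|G,G';\bar G,\bar G'\|_{\mathbf{D}_{X,\bar X}^{\beta,\beta'}L_\infty\mathcal{L}_{\gamma_1,\gamma_2}}$ respectively. For the second estimate I would split the ``distance'' of $(GY,GY'+G'Y)$ and $(\bar G\bar Y,\bar G\bar Y'+\bar G'\bar Y)$ into its three pieces. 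For the $E^\beta$- and $E^{\beta'}$-Hölder pieces, expand $\Delta(GY)=G\,\Delta Y+(\Delta G)\bar Y$, $\Delta(GY')=G\,\Delta Y'+(\Delta G)\bar Y'$, $\Delta(G'Y)=G'\,\Delta Y+(\Delta G')\bar Y$ and apply Proposition~\ref{lcp} to each of the six summands exactly as in the proof of Proposition~\ref{lc} (invoking the embedding Proposition~\ref{in} to place $Y$, $\bar Y$ in $E^{\beta'}L_m\mathcal{H}_{\gamma_1}$ before composing with $G'$, $\Delta G'$), matching each $\Delta$-factor with a ``distance'' and each bare factor with a norm.

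The remainder piece $\|\mathbb{E}_\cdot\Delta R^{GY}\|_{\beta+\beta',m,\gamma_2-\beta-\beta'}$ is where the bookkeeping concentrates. Rather than work with $R^{GY}$ directly, I would start from the algebraic identity $R^{GY}_{s,t}=\delta G_{s,t}\,\delta Y_{s,t}+R^G_{s,t}Y_s+G_sR^Y_{s,t}$, valid for each pair with respect to its own driving path, and subtract to get
\[
\Delta R^{GY}_{s,t}=\bigl(\delta G_{s,t}\,\delta\Delta Y_{s,t}+\delta\Delta G_{s,t}\,\delta\bar Y_{s,t}\bigr)+\bigl(R^G_{s,t}\,\Delta Y_s+\Delta R^G_{s,t}\,\bar Y_s\bigr)+\bigl(G_s\,\Delta R^Y_{s,t}+\Delta G_s\,R^{\bar Y}_{s,t}\bigr).
\]
Applying $\mathbb{E}_s$ and using that $Y_s$, $\bar Y_s$, $\Delta Y_s$, $G_s$, $\Delta G_s$ are $\mathcal{F}_s$-measurable pushes the conditional expectation onto the remainder factors, so the last four summands become $(\mathbb{E}_sR^G_{s,t})\Delta Y_s$, $(\mathbb{E}_s\Delta R^G_{s,t})\bar Y_s$, $G_s(\mathbb{E}_s\Delta R^Y_{s,t})$ and $\Delta G_s(\mathbb{E}_sR^{\bar Y}_{s,t})$, each bounded by $|t-s|^{\beta+\beta'}$ times a norm--``distance'' product exactly as in Proposition~\ref{lc}, while the two $\delta G\,\delta Y$-type terms are controlled by conditional Jensen since they are already $|t-s|^{\beta+\beta'}$-small. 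Collecting the three pieces gives the second inequality. The one subtlety, inherited from Proposition~\ref{lc} rather than genuinely new, is that the ``distances'' control only $\mathbb{E}_\cdot\Delta R^G$ and $\mathbb{E}_\cdot\Delta R^Y$ and never $\Delta R^G$, $\Delta R^Y$ themselves; this is precisely why one must pass to the algebraic remainder identity before taking $\mathbb{E}_s$ and only after the $\mathcal{F}_s$-measurable factors have been extracted. Beyond this there is no analytic difficulty, only the management of roughly a dozen terms.
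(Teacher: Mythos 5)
Your proposal is correct and is exactly the route the paper intends: the paper gives no separate argument for Proposition~\ref{lcs}, stating only that it follows ``analogous to the proof of estimates \eqref{lce1} and \eqref{lce2}'', and your bilinear splitting $\Delta(AB)=A\,\Delta B+(\Delta A)\bar{B}$ applied to each term of the Proposition~\ref{lc} argument — in particular to the remainder identity $R^{GY}_{s,t}=\delta G_{s,t}\delta Y_{s,t}+R^{G}_{s,t}Y_{s}+G_{s}R^{Y}_{s,t}$ before taking $\mathbb{E}_{s}$ — is precisely that elaboration. Your observations that no $\rho_{\alpha}(\mathbf{X},\bar{\mathbf{X}})$ term arises and that only $\mathbb{E}_{\cdot}\Delta R^{G}$, $\mathbb{E}_{\cdot}\Delta R^{Y}$ (never the unconditioned differences) are available are both accurate and consistent with the stated estimate.
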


\section{Mild solution of semilinear rough SPDEs}
\label{Sec4}
In this section, we fix $0 < \beta^{\prime} < \beta < \alpha$, $\gamma \in \mathbb{R}$, 
$\lambda \in \left[0, 1\right)$, $\mu \in \left[0, \frac{1}{2}\right)$, $\nu \in \left[0, \beta^{\prime}\right]$ 
and $m \in \left[2, \infty\right)$, such that $\alpha+\beta+\beta^{\prime} > 1$. 
Consider the following semilinear rough stochastic partial differential equation (rough SPDE) 
\begin{equation} \label{rspde}
	\left\{
		\begin{aligned}
			&du_{t}=\left[L_{t}u_{t}+f\left(t, u_{t}\right)\right]dt+\left(G_{t}u_{t}+g_{t}\right)d\mathbf{X}_{t}+h\left(t, u_{t}\right)dW_{t}, \quad t \in \left(0, T\right],\\
			&u_{0}=\xi.
		\end{aligned}
	\right.	
\end{equation}
Here, $\mathbf{X}=\left(X, \mathbb{X}\right) \in \mathscr{C}^{\alpha}$, 
$\xi$ is an $\mathcal{H}_{\gamma}$-valued random variable, 
$f: \left[0, T\right] \times \Omega \times \mathcal{H}_{\gamma} \rightarrow \mathcal{H}_{\gamma-\lambda}$ 
and $h: \left[0, T\right] \times \Omega \times \mathcal{H}_{\gamma} \rightarrow \mathcal{H}_{\gamma-\mu}^{d}$ 
are progressively measurable vector fields, 
$G: \left[0, T\right] \times \Omega \rightarrow \mathcal{L}\left(\mathcal{H}_{\gamma}, \mathcal{H}_{\gamma-\nu}^{e}\right)$, 
$G^{\prime}: \left[0, T\right] \times \Omega \rightarrow \mathcal{L}\left(\mathcal{H}_{\gamma}, \mathcal{H}_{\gamma-\beta-\nu}^{e \times e}\right)$ 
(implied in \eqref{rspde}), 
$g: \left[0, T\right] \times \Omega \rightarrow \mathcal{H}_{\gamma-\nu}^{e}$ and 
$g^{\prime}: \left[0, T\right] \times \Omega \rightarrow \mathcal{H}_{\gamma-\beta-\nu}^{e \times e}$ 
(also implied in \eqref{rspde}) are measurable adapted processes. 
We first introduce the following definition of mild solutions in $\mathbf{D}_{X}^{\beta, \beta^{\prime}}L_{m}\mathcal{H}_{\gamma}$. 
\begin{definition} \label{msd}
	We call $\left(u, u^{\prime}\right) \in \mathbf{D}_{X}^{\beta, \beta^{\prime}}L_{m}\mathcal{H}_{\gamma}$ 
	a mild solution of rough SPDE \eqref{rspde} if $u^{\prime}=Gu+g$, 
	$\left(Gu+g, Gu^{\prime}+G^{\prime}u+g^{\prime}\right) \in \mathbf{D}_{X}^{\beta, \beta^{\prime}}L_{m}\mathcal{H}_{\gamma-\nu}^{e}$, 
	and for every $t \in \left[0, T\right]$ and a.s. $\omega \in \Omega$, 
	\begin{equation*}
		u_{t}=S_{0, t}\xi+\int_{0}^{t}S_{r, t}f\left(r, u_{r}\right)dr+\int_{0}^{t}S_{r, t}\left(G_{r}u_{r}+g_{r}\right)d\mathbf{X}_{r}+\int_{0}^{t}S_{r, t}h\left(r, u_{r}\right)dW_{r}
	\end{equation*}
	holds in $\mathcal{H}_{\gamma}$. 
\end{definition}
Then we introduce the following assumption. 
\begin{assumption} \label{as}
	~
	\begin{enumerate}[(i)]
		\item $\mathbf{X}=\left(X, \mathbb{X}\right) \in \mathscr{C}^{\alpha}\left(\left[0, T\right], \mathbb{R}^{e}\right)$; 
		\item $\xi \in L^{m}\left(\Omega, \mathcal{F}_{0}, \mathcal{H}_{\gamma}\right)$; 
		\item $\left\|f\left(\cdot, 0\right)\right\|_{0, m, \gamma-\lambda} < \infty$ and 
		\begin{equation*}
			\left|f\left(t, u\right)-f\left(t, \bar{u}\right)\right|_{\gamma-\lambda} \lesssim \left|u-\bar{u}\right|_{\gamma}, \quad \forall t \in \left[0, T\right], \quad \forall u, \bar{u} \in \mathcal{H}_{\gamma};
		\end{equation*}
		\item $\left\|h\left(\cdot, 0\right)\right\|_{0, m, \gamma-\mu} < \infty$ and  
		\begin{equation*}
			\left|h\left(t, u\right)-h\left(t, \bar{u}\right)\right|_{\gamma-\mu} \lesssim \left|u-\bar{u}\right|_{\gamma}, \quad \forall t \in \left[0, T\right], \quad \forall u, \bar{u} \in \mathcal{H}_{\gamma}; 
		\end{equation*}
		\item $\left(G, G^{\prime}\right) \in \mathbf{D}_{X}^{\beta, \beta^{\prime}}L_{\infty}\mathcal{L}_{\gamma, \gamma-\nu}\left(\mathcal{H}, \mathcal{H}^{e}\right)$ 
		and $\left(g, g^{\prime}\right) \in \mathbf{D}_{X}^{\beta, \beta^{\prime}}L_{m}\mathcal{H}_{\gamma-\nu}^{e}$.
	\end{enumerate}
\end{assumption}
\subsection{Existence and uniqueness}
We now give the existence and uniqueness of the mild solution of \eqref{rspde}. 
\begin{theorem} \label{seu}
	Under Assumption \ref{as}, the rough SPDE \eqref{rspde} 
	has a unique mild solution $\left(u, u^{\prime}\right) \in \mathbf{D}_{X}^{\beta, \beta^{\prime}}L_{m}\mathcal{H}_{\gamma}$, and we have 
	\begin{equation} \label{se}
		\left\|u, u^{\prime}\right\|_{\mathbf{D}_{X}^{\beta, \beta^{\prime}}L_{m}\mathcal{H}_{\gamma}} \lesssim \left\|\xi\right\|_{m, \gamma}+\left\|f\left(\cdot, 0\right)\right\|_{0, m, \gamma-\lambda}+\left\|h\left(\cdot, 0\right)\right\|_{0, m, \gamma-\mu}+\left\|g, g^{\prime}\right\|_{\mathbf{D}_{X}^{\beta, \beta^{\prime}}L_{m}\mathcal{H}_{\gamma-\nu}},
	\end{equation}
	for a hidden prefactor depending on $\left|\mathbf{X}\right|_{\alpha}$ and $\left\|G, G^{\prime}\right\|_{\mathbf{D}_{X}^{\beta, \beta^{\prime}}L_{\infty}\mathcal{L}_{\gamma, \gamma-\nu}}$. 
\end{theorem}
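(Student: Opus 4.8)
The plan is to obtain the mild solution as the fixed point of a solution map on a short interval, and then to patch the local solutions together. Fix $\tau\in(0,T]$ and work in the affine subspace $\mathcal{A}_{\tau}:=\{(v,v^{\prime})\in\mathbf{D}_{X}^{\beta,\beta^{\prime}}L_{m}\mathcal{H}_{\gamma}[0,\tau]:v_{0}=\xi,\ v^{\prime}_{0}=G_{0}\xi+g_{0}\}$, which is a closed, hence complete, subspace of a Banach space. On $\mathcal{A}_{\tau}$ define $\Phi(v,v^{\prime}):=(u,Gv+g)$ with
\begin{equation*}
	u_{t}:=S_{0,t}\xi+\int_{0}^{t}S_{r,t}f(r,v_{r})\,dr+\int_{0}^{t}S_{r,t}(G_{r}v_{r}+g_{r})\,d\mathbf{X}_{r}+\int_{0}^{t}S_{r,t}h(r,v_{r})\,dW_{r},
\end{equation*}
where the third integral is the rough stochastic convolution provided by Theorem \ref{rsci}: by Assumption \ref{as}(v), Proposition \ref{lc} and linearity, the pair $(Gv+g,\,Gv^{\prime}+G^{\prime}v+g^{\prime})$ lies in $\mathbf{D}_{X}^{\beta,\beta^{\prime}}L_{m}\mathcal{H}_{\gamma-\nu}^{e}$, and Theorem \ref{rsci} with $\theta=\nu\in[0,\beta^{\prime}]$ yields $\big(\int_{0}^{\cdot}S_{r,\cdot}(G_{r}v_{r}+g_{r})\,d\mathbf{X}_{r},\,Gv+g\big)\in\mathcal{D}_{X}^{\beta,\beta^{\prime}}L_{m}\mathcal{H}_{\gamma}$, with Gubinelli derivative $Gv+g$, exactly the one declared for $u$. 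Thus $\Phi$ is well defined and a fixed point of $\Phi$ is precisely a mild solution in the sense of Definition \ref{msd}.

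The first step is to show $\Phi:\mathcal{A}_{\tau}\to\mathcal{A}_{\tau}$ with a quantitative bound. The term $S_{0,\cdot}\xi$ has vanishing mild increment, so it contributes only $\|\xi\|_{m,\gamma}$. For the drift term, Proposition \ref{asp} and $\lambda<1$ give classical (maximal-regularity type) bounds on $\|\int_{s}^{t}S_{r,t}f(r,v_{r})\,dr\|_{m,\gamma+\eta}$ for $\eta\in\{0,-\beta,-\beta-\beta^{\prime}\}$, each carrying a positive power of $\tau$, while $\|f(r,v_{r})\|_{m,\gamma-\lambda}\le\|f(\cdot,0)\|_{0,m,\gamma-\lambda}+C\|v\|_{0,m,\gamma}$ by Assumption \ref{as}(iii). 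For the It\^o term, $\int_{s}^{t}S_{r,t}h(r,v_{r})\,dW_{r}$ is an ordinary stochastic integral with $\mathbb{E}_{s}[\,\cdot\,]=0$, so it adds nothing to the conditional–expectation remainder, while the Burkholder--Davis--Gundy inequality, Proposition \ref{asp} and $\mu<1/2$ bound its $L^{m}(\Omega,\mathcal{H}_{\gamma+\eta})$-norm by a positive power of $\tau$ times $\|h(\cdot,0)\|_{0,m,\gamma-\mu}+C\|v\|_{0,m,\gamma}$, using Assumption \ref{as}(iv). The rough convolution term, its $E^{\beta}$-norm and its remainder are controlled by Theorem \ref{rsci} (estimates \eqref{rscie2} and \eqref{rscie3}) together with Proposition \ref{lc}. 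Since the Gubinelli derivative assigned to $u$ equals that of the rough convolution, the remainder $R^{u}_{s,t}$ splits as $\delta(S_{0,\cdot}\xi)_{s,t}$ plus the increments of the drift and It\^o terms plus the remainder of the rough convolution, and these estimates combine into
\begin{equation*}
	\|\Phi(v,v^{\prime})\|_{\mathbf{D}_{X}^{\beta,\beta^{\prime}}L_{m}\mathcal{H}_{\gamma}[0,\tau]}\lesssim\|\xi\|_{m,\gamma}+M+\|G,G^{\prime}\|_{\mathbf{D}_{X}^{\beta,\beta^{\prime}}L_{\infty}\mathcal{L}_{\gamma,\gamma-\nu}}\|v\|_{E^{\beta}L_{m}\mathcal{H}_{\gamma}}+\tau^{\kappa}\big(M+\|v,v^{\prime}\|_{\mathbf{D}_{X}^{\beta,\beta^{\prime}}L_{m}\mathcal{H}_{\gamma}[0,\tau]}\big)
\end{equation*}
for some $\kappa>0$, where $M:=\|f(\cdot,0)\|_{0,m,\gamma-\lambda}+\|h(\cdot,0)\|_{0,m,\gamma-\mu}+\|g,g^{\prime}\|_{\mathbf{D}_{X}^{\beta,\beta^{\prime}}L_{m}\mathcal{H}_{\gamma-\nu}}$, with hidden constant depending on $|\mathbf{X}|_{\alpha}$.

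The main obstacle is the term $\|G,G^{\prime}\|\,\|v\|_{E^{\beta}L_{m}\mathcal{H}_{\gamma}}$ coming from the Gubinelli-derivative part $v^{\prime}\mapsto Gv+g$ of $\Phi$: it carries no positive power of $\tau$, so when $\|G,G^{\prime}\|\ge1$ no ball is invariant in the $\mathbf{D}$-norm and $\Phi$ is not a $\mathbf{D}$-contraction. To circumvent this I would pass to the equivalent weighted norm $\|v,v^{\prime}\|_{\star}:=\|v\|_{E^{\beta}L_{m}\mathcal{H}_{\gamma}}+\varepsilon\big(\|v^{\prime}\|_{E^{\beta^{\prime}}L_{m}\mathcal{H}_{\gamma-\beta}}+\|\mathbb{E}_{\cdot}R^{v}\|_{\beta+\beta^{\prime},m,\gamma-\beta-\beta^{\prime}}\big)$ with $\varepsilon\in(0,1]$ to be chosen. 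For $(v,v^{\prime}),(\bar v,\bar v^{\prime})\in\mathcal{A}_{\tau}$ the difference $\Phi(v,v^{\prime})-\Phi(\bar v,\bar v^{\prime})$ has no $S_{0,\cdot}\xi$-contribution; the drift and It\^o parts are estimated as before through the Lipschitz bounds on $f,h$, while for the rough part one exploits the \emph{linearity} of the rough stochastic convolution in $(Y,Y^{\prime})$: apply Theorem \ref{rsci} to $(G(v-\bar v),\,G(v^{\prime}-\bar v^{\prime})+G^{\prime}(v-\bar v))$, whose value at $0$ vanishes because $v_{0}=\bar v_{0}$, together with Proposition \ref{lc}. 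This gives, for some $\kappa>0$,
\begin{equation*}
	\|\Phi(v,v^{\prime});\Phi(\bar v,\bar v^{\prime})\|_{\star}\le C\varepsilon\|G,G^{\prime}\|\,\|v-\bar v\|_{E^{\beta}L_{m}\mathcal{H}_{\gamma}}+C\tau^{\kappa}\big(1+\|G,G^{\prime}\|(1+|\mathbf{X}|_{\alpha}^{2})\big)\varepsilon^{-1}\,\|v,v^{\prime};\bar v,\bar v^{\prime}\|_{\star}.
\end{equation*}
Choosing first $\varepsilon$ so that $C\varepsilon\|G,G^{\prime}\|\le\tfrac14$ and then $\tau=\tau_{0}$ so that the second prefactor is $\le\tfrac14$ — both depending only on $\alpha,\beta,\beta^{\prime},\lambda,\mu,\nu,m,T\vee1,|\mathbf{X}|_{\alpha}$ and $\|G,G^{\prime}\|$, and using that $f,h$ are globally Lipschitz and $G,G^{\prime},g,g^{\prime}$ occur linearly so that no ball restriction is needed — makes $\Phi$ a contraction of $(\mathcal{A}_{\tau_{0}},\|\cdot;\cdot\|_{\star})$. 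Hence \eqref{rspde} has a unique mild solution on $[0,\tau_{0}]$, and inserting it into the displayed invariance estimate yields $\|u,u^{\prime}\|_{\mathbf{D}_{X}^{\beta,\beta^{\prime}}L_{m}\mathcal{H}_{\gamma}[0,\tau_{0}]}\lesssim\|\xi\|_{m,\gamma}+M$.

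Finally I would iterate. Since $\tau_{0}$ depends only on quantities that do not increase when the interval is replaced by a subinterval, the same construction solves \eqref{rspde} on $[\tau_{0},2\tau_{0}]$ with the $\mathcal{F}_{\tau_{0}}$-measurable initial datum $u_{\tau_{0}}\in L^{m}(\Omega,\mathcal{H}_{\gamma})$ (and Gubinelli derivative $G_{\tau_{0}}u_{\tau_{0}}+g_{\tau_{0}}$), and so on; after $\lceil T/\tau_{0}\rceil$ steps one reaches $T$, the pieces agree at the junctions, and the concatenation of finitely many stochastic controlled rough paths that match at endpoints is again one, with $\mathbf{D}$-norm bounded by the sum of the pieces' norms up to a factor depending on the number of pieces. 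Applying the local estimate on each piece with $\|u_{k\tau_{0}}\|_{m,\gamma}\le\|u,u^{\prime}\|_{\mathbf{D}_{X}^{\beta,\beta^{\prime}}L_{m}\mathcal{H}_{\gamma}[(k-1)\tau_{0},k\tau_{0}]}$ and summing the resulting recursion gives the global estimate \eqref{se}, with a prefactor depending on $|\mathbf{X}|_{\alpha}$ and $\|G,G^{\prime}\|_{\mathbf{D}_{X}^{\beta,\beta^{\prime}}L_{\infty}\mathcal{L}_{\gamma,\gamma-\nu}}$. Global uniqueness follows by the same patching: two global mild solutions coincide on $[0,\tau_{0}]$ by the contraction property there, then on $[\tau_{0},2\tau_{0}]$ since they share the initial datum $u_{\tau_{0}}$, and so forth.
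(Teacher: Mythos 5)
Your proposal is correct and follows the same overall architecture as the paper: a Banach fixed-point argument on the affine subspace of $\mathbf{D}_{X}^{\beta,\beta^{\prime}}L_{m}\mathcal{H}_{\gamma}$ with pinned initial data $\left(\xi, G_{0}\xi+g_{0}\right)$, using Proposition \ref{lc} and Theorem \ref{rsci} with $\theta=\nu$ for the rough term, elementary $S$-estimates plus Minkowski/BDG for the drift and It\^o terms, a short-time contraction, and patching over a partition with mesh independent of the initial datum. The one genuine divergence is your $\varepsilon$-weighted norm $\left\|\cdot\right\|_{\star}$, introduced to absorb the term $\left\|G, G^{\prime}\right\|\left\|v\right\|_{E^{\beta}L_{m}\mathcal{H}_{\gamma}}$ that you identify as carrying no power of $\tau$. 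In the paper this obstacle does not arise, because estimate \eqref{rscie3} is stated so that the only contribution without a positive power of $T$ is $\left\|Y_{0}\right\|_{m,\gamma}$: inside its proof, estimate \eqref{rscie5} trades the H\"older exponent $\beta$ for $\beta^{\prime}<\beta$ (Proposition \ref{in}) and uses $Y_{t}=Y_{0}+\delta Y_{0,t}$ to write $\left\|Y\right\|_{E^{\beta^{\prime}}L_{m}\mathcal{H}_{\gamma+\theta-\beta}} \lesssim \left\|Y_{0}\right\|_{m,\gamma}+T^{\beta-\beta^{\prime}}\left\|Y, Y^{\prime}\right\|_{\mathbf{D}_{X}^{\beta,\beta^{\prime}}L_{m}\mathcal{H}_{\gamma}}$. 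On the affine subspace $Y_{0}=G_{0}\xi+g_{0}$ is a fixed constant, and in the difference estimate $\Delta Y_{0}=G_{0}\Delta v_{0}=0$, so the unweighted $\mathbf{D}$-norm contraction already closes with a factor $T^{(\alpha-\beta)\wedge(\beta-\beta^{\prime})}$ — this is exactly why the standing hypothesis $\beta^{\prime}<\beta<\alpha$ of Section \ref{Sec4} is strict. Your weighted-norm route is nevertheless valid: it buys robustness at the cost of a prefactor $\varepsilon^{-1}$ (hence a constant depending on $\left\|G, G^{\prime}\right\|_{\mathbf{D}_{X}^{\beta,\beta^{\prime}}L_{\infty}\mathcal{L}_{\gamma,\gamma-\nu}}$, which the statement permits), whereas the paper's route shows the weighting is unnecessary once the initial-value pinning is exploited through \eqref{rscie3}.
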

\begin{proof}
	Let $\varepsilon \in \left(0, 1\right]$ be a constant waiting to be determined. 
	We first show the existence and uniqueness for $T \leq \varepsilon$. 
	Define 
	\begin{equation*}
		\mathbf{S}:=\left\{\left(u, u^{\prime}\right) \in \mathbf{D}_{X}^{\beta, \beta^{\prime}}L_{m}\mathcal{H}_{\gamma}: u_{0}=\xi, u_{0}^{\prime}=G_{0}\xi+g_{0}\right\}.
	\end{equation*}
	Since the process pair $t \mapsto \left(\xi+\left(G_{0}\xi+g_{0}\right)\delta X_{0, t}, G_{0}\xi+g_{0}\right)$ 
	belongs to $\mathbf{S}$, $\mathbf{S}$ is a nonempty complete metric subspace of 
	$\mathbf{D}_{X}^{\beta, \beta^{\prime}}L_{m}\mathcal{H}_{\gamma}$. 
	For every $\left(u, u^{\prime}\right) \in \mathbf{S}$, by Proposition \ref{lc} we have 
	$\left(Y, Y^{\prime}\right):=\left(Gu+g, Gu^{\prime}+G^{\prime}u+g^{\prime}\right) \in \mathbf{D}_{X}^{\beta, \beta^{\prime}}L_{m}\mathcal{H}_{\gamma-\nu}^{e}$ and 
	\begin{equation*}
		\left\|Y_{0}\right\|_{m ,\gamma-\nu} \lesssim \left\|\xi\right\|_{m, \gamma}+\left\|g_{0}\right\|_{m, \gamma-\nu}, \quad \left\|Y, Y^{\prime}\right\|_{\mathbf{D}_{X}^{\beta, \beta^{\prime}}L_{m}\mathcal{H}_{\gamma-\nu}} \lesssim \left\|u, u^{\prime}\right\|_{\mathbf{D}_{X}^{\beta, \beta^{\prime}}L_{m}\mathcal{H}_{\gamma}}+\left\|g, g^{\prime}\right\|_{\mathbf{D}_{X}^{\beta, \beta^{\prime}}L_{m}\mathcal{H}_{\gamma-\nu}}.
	\end{equation*}
	Then by Proposition \ref{rsci}, we have 
	$\left(Z, Y\right):=\left(\int_{0}^{\cdot}S_{r, \cdot}Y_{r}d\mathbf{X}_{r}, Y\right) \in \mathcal{D}_{X}^{\beta, \beta^{\prime}}L_{m}\mathcal{H}_{\gamma}$ and 
	\begin{align*}
		\left\|Z, Y\right\|_{\mathbf{D}_{X}^{\beta, \beta^{\prime}}L_{m}\mathcal{H}_{\gamma}} &\lesssim \left\|Y_{0}\right\|_{m, \gamma-\nu}+T^{\left(\alpha-\beta\right)\wedge\left(\beta-\beta^{\prime}\right)}\left\|Y, Y^{\prime}\right\|_{\mathbf{D}_{X}^{\beta, \beta^{\prime}}L_{m}\mathcal{H}_{\gamma-\nu}}\\
		&\lesssim T^{\left(\alpha-\beta\right)\wedge\left(\beta-\beta^{\prime}\right)}\left\|u, u^{\prime}\right\|_{\mathbf{D}_{X}^{\beta, \beta^{\prime}}L_{m}\mathcal{H}_{\gamma}}+\left\|\xi\right\|_{m, \gamma}+\left\|g, g^{\prime}\right\|_{\mathbf{D}_{X}^{\beta, \beta^{\prime}}L_{m}\mathcal{H}_{\gamma-\nu}}.
	\end{align*} 
	Define 
	\begin{equation*}
		F:=\int_{0}^{\cdot}S_{r, \cdot}f\left(r, u_{r}\right)dr, \quad H:=\int_{0}^{\cdot}S_{r, \cdot}h\left(r, u_{r}\right)dW_{r}, \quad 
	\end{equation*}
	\begin{equation*}
		\Phi\left(u, u^{\prime}\right)=\left(\Phi^{u}, \Phi^{u^{\prime}}\right):=\left(S_{0, \cdot}\xi+F+Z+H, Y\right).
	\end{equation*}
	Then $\Phi_{0}^{u}=\xi$ and $\Phi_{0}^{u^{\prime}}=G_{0}\xi+g_{0}$. 
	Since $S$ is bounded and strongly continuous on $\mathcal{H}_{\gamma}$, 
	we have $S_{0, \cdot}\xi \in CL_{m}\mathcal{H}_{\gamma}$ and 
	\begin{equation*}
		\left\|S_{0, \cdot}\xi\right\|_{0, m, \gamma} \lesssim \left\|\xi\right\|_{m, \gamma}. 
	\end{equation*}
	Combined with $\hat{\delta}S_{0, \cdot}\xi=0$, by Propositions \ref{ne} and \ref{de} we have 
	$\left(S_{0, \cdot}\xi, 0\right) \in \mathcal{D}_{X}^{\beta, \beta^{\prime}}L_{m}\mathcal{H}_{\gamma}$ and 
	\begin{equation*}
		\left\|S_{0, \cdot}\xi, 0\right\|_{\mathbf{D}_{X}^{\beta, \beta^{\prime}}L_{m}\mathcal{H}_{\gamma}} \lesssim \left\|\xi\right\|_{m, \gamma}. 
	\end{equation*}
	By Proposition \ref{asp} and applying Minkowski's inequality, we have 
	\begin{align*}
		\left\|F_{t}\right\|_{m, \gamma} &\lesssim \left(\mathbb{E}\left(\int_{0}^{t}\left|S_{r, t}f\left(r, u_{r}\right)\right|_{\gamma}dr\right)^{m}\right)^{\frac{1}{m}} \lesssim \int_{0}^{t}\left|t-r\right|^{-\lambda}\left\|f\left(r, u_{r}\right)\right\|_{m, \gamma-\lambda}dr\\
		&\lesssim T^{1-\lambda}\left\|u, u^{\prime}\right\|_{\mathbf{D}_{X}^{\beta, \beta^{\prime}}L_{m}\mathcal{H}_{\gamma}}+\left\|f\left(\cdot, 0\right)\right\|_{0, m, \gamma-\lambda}, \quad \forall t \in \left[0, T\right],
	\end{align*}
	\begin{align*}
		\left\|\hat{\delta}F_{s, t}\right\|_{m, \gamma-\beta-\beta^{\prime}} &\lesssim \left(\mathbb{E}\left(\int_{s}^{t}\left|S_{r, t}f\left(r, u_{r}\right)\right|_{\gamma-\beta-\beta^{\prime}}dr\right)^{m}\right)^{\frac{1}{m}} \lesssim \int_{s}^{t}\left|t-r\right|^{-\left(\lambda-\beta-\beta^{\prime}\right)^{+}}\left\|f\left(r, u_{r}\right)\right\|_{m, \gamma-\lambda}dr\\
		&\lesssim \left(T^{1-\left(\beta+\beta^{\prime}\right) \vee \lambda}\left\|u, u^{\prime}\right\|_{\mathbf{D}_{X}^{\beta, \beta^{\prime}}L_{m}\mathcal{H}_{\gamma}}+\left\|f\left(\cdot, 0\right)\right\|_{0, m, \gamma-\lambda}\right)\left|t-s\right|^{\beta+\beta^{\prime}}, \quad \forall \left(s, t\right) \in \Delta_{2}.
	\end{align*}
	Then we have $F \in CL_{m}\mathcal{H}_{\gamma} \cap \hat{C}^{\beta+\beta^{\prime}}L_{m}\mathcal{H}_{\gamma-\beta-\beta^{\prime}}$ and 
	\begin{equation*}
		\left\|F\right\|_{0, m, \gamma}+\left\|\hat{\delta}F\right\|_{\beta+\beta^{\prime}, m, \gamma-\beta-\beta^{\prime}} \lesssim T^{1-\left(\beta+\beta^{\prime}\right) \vee \lambda}\left\|u, u^{\prime}\right\|_{\mathbf{D}_{X}^{\beta, \beta^{\prime}}L_{m}\mathcal{H}_{\gamma}}+\left\|f\left(\cdot, 0\right)\right\|_{0, m, \gamma-\lambda}.
	\end{equation*}
	By Propositions \ref{in} and \ref{ne}, we have $F \in E^{\beta+\beta^{\prime}}L_{m}\mathcal{H}_{\gamma} \subset E^{\beta}L_{m}\mathcal{H}_{\gamma}$ and 
	\begin{equation*}
		\left\|F\right\|_{E^{\beta}L_{m}\mathcal{H}_{\gamma}} \lesssim \left\|F\right\|_{E^{\beta+\beta^{\prime}}L_{m}\mathcal{H}_{\gamma}} \lesssim T^{1-\left(\beta+\beta^{\prime}\right) \vee \lambda}\left\|u, u^{\prime}\right\|_{\mathbf{D}_{X}^{\beta, \beta^{\prime}}L_{m}\mathcal{H}_{\gamma}}+\left\|f\left(\cdot, 0\right)\right\|_{0, m, \gamma-\lambda}. 
	\end{equation*}
	Then $\left(F, 0\right) \in \mathcal{D}_{X}^{\beta, \beta^{\prime}}L_{m}\mathcal{H}_{\gamma}$ 
	since $R^{F}=\delta F \in C_{2}^{\beta+\beta^{\prime}}L_{m}\mathcal{H}_{\gamma-\beta-\beta^{\prime}}$, and we have 
	\begin{align*}
		\left\|F, 0\right\|_{\mathbf{D}_{X}^{\beta, \beta^{\prime}}L_{m}\mathcal{H}_{\gamma}} &\lesssim \left\|F\right\|_{E^{\beta}L_{m}\mathcal{H}_{\gamma}}+\left\|\delta F\right\|_{\beta+\beta^{\prime}, m, \gamma-\beta-\beta^{\prime}}\\
		&\lesssim T^{1-\left(\beta+\beta^{\prime}\right) \vee \lambda}\left\|u, u^{\prime}\right\|_{\mathbf{D}_{X}^{\beta, \beta^{\prime}}L_{m}\mathcal{H}_{\gamma}}+\left\|f\left(\cdot, 0\right)\right\|_{0, m, \gamma-\lambda}. 
	\end{align*}
	In a similar way, by Proposition \ref{asp} and applying Minkowski's inequality and the BDG inequality, we have 
	\begin{align*}
		\left\|H_{t}\right\|_{m, \gamma} &\lesssim \left(\mathbb{E}\left(\int_{0}^{t}\left|S_{r, t}h\left(r, u_{r}\right)\right|_{\gamma}^{2}dr\right)^{\frac{m}{2}}\right)^{\frac{1}{m}} \lesssim \left(\int_{0}^{t}\left|t-r\right|^{-2\mu}\left\|h\left(r, u_{r}\right)\right\|_{m, \gamma-\mu}^{2}dr\right)^{\frac{1}{2}}\\
		&\lesssim T^{\frac{1}{2}-\mu}\left\|u, u^{\prime}\right\|_{\mathbf{D}_{X}^{\beta, \beta^{\prime}}L_{m}\mathcal{H}_{\gamma}}+\left\|h\left(\cdot, 0\right)\right\|_{0, m, \gamma-\mu}, \quad \forall t \in \left[0, T\right],
	\end{align*}
	\begin{align*}
		\left\|\hat{\delta}H_{s, t}\right\|_{m, \gamma-\beta} &\lesssim \left(\mathbb{E}\left(\int_{s}^{t}\left|S_{r, t}h\left(r, u_{r}\right)\right|_{\gamma-\beta}^{2}dr\right)^{\frac{m}{2}}\right)^{\frac{1}{m}} \lesssim \left(\int_{s}^{t}\left|t-r\right|^{-2\left(\mu-\beta\right)^{+}}\left\|h\left(r, u_{r}\right)\right\|_{m, \gamma-\mu}^{2}dr\right)^{\frac{1}{2}}\\
		&\lesssim \left(T^{\frac{1}{2}-\beta \vee \mu}\left\|u, u^{\prime}\right\|_{\mathbf{D}_{X}^{\beta, \beta^{\prime}}L_{m}\mathcal{H}_{\gamma}}+\left\|h\left(\cdot, 0\right)\right\|_{0, m, \gamma-\mu}\right)\left|t-s\right|^{\beta}, \quad \forall \left(s, t\right) \in \Delta_{2}.
	\end{align*}
	Then we have $H \in CL_{m}\mathcal{H}_{\gamma} \cap \hat{C}^{\beta}L_{m}\mathcal{H}_{\gamma-\beta}$ and 
	\begin{equation*}
		\left\|H\right\|_{0, m, \gamma}+\left\|\hat{\delta}H\right\|_{\beta, m, \gamma-\beta} \lesssim T^{\frac{1}{2}-\beta \vee \mu}\left\|u, u^{\prime}\right\|_{\mathbf{D}_{X}^{\beta, \beta^{\prime}}L_{m}\mathcal{H}_{\gamma}}+\left\|h\left(\cdot, 0\right)\right\|_{0, m, \gamma-\mu}.
	\end{equation*}
	By Proposition \ref{ne}, we have $H \in E^{\beta}L_{m}\mathcal{H}_{\gamma}$ and 
	\begin{equation*}
		\left\|H\right\|_{E^{\beta}L_{m}\mathcal{H}_{\gamma}} \lesssim T^{\frac{1}{2}-\beta \vee \mu}\left\|u, u^{\prime}\right\|_{\mathbf{D}_{X}^{\beta, \beta^{\prime}}L_{m}\mathcal{H}_{\gamma}}+\left\|h\left(\cdot, 0\right)\right\|_{0, m, \gamma-\mu}.
	\end{equation*}
	Combined with $\mathbb{E}_{\cdot}\hat{\delta}H=0$, by Proposition \ref{de} we have 
	$\left(H, 0\right) \in \mathbf{D}_{X}^{\beta, \beta^{\prime}}L_{m}\mathcal{H}_{\gamma}$ and 
	\begin{equation*}
		\left\|H, 0\right\|_{\mathbf{D}_{X}^{\beta, \beta^{\prime}}L_{m}\mathcal{H}_{\gamma}} \lesssim \left\|H\right\|_{E^{\beta}L_{m}\mathcal{H}_{\gamma}} \lesssim T^{\frac{1}{2}-\beta \vee \mu}\left\|u, u^{\prime}\right\|_{\mathbf{D}_{X}^{\beta, \beta^{\prime}}L_{m}\mathcal{H}_{\gamma}}+\left\|h\left(\cdot, 0\right)\right\|_{0, m, \gamma-\mu}.
	\end{equation*}
	Therefore, $\Phi\left(u, u^{\prime}\right) \in \mathbf{S}$ and there exists $\sigma > 0$ only depending on $\alpha, \beta, \beta^{\prime}, \lambda, \mu$ and $\nu$ such that 
	\begin{align}
		\left\|\Phi^{u}, \Phi^{u^{\prime}}\right\|_{\mathbf{D}_{X}^{\beta, \beta^{\prime}}L_{m}\mathcal{H}_{\gamma}} &\lesssim T^{\sigma}\left\|u, u^{\prime}\right\|_{\mathbf{D}_{X}^{\beta, \beta^{\prime}}L_{m}\mathcal{H}_{\gamma}}+\left\|\xi\right\|_{m, \gamma}+\left\|f\left(\cdot, 0\right)\right\|_{0, m, \gamma-\lambda} \notag\\
		&\quad+\left\|h\left(\cdot, 0\right)\right\|_{0, m, \gamma-\mu}+\left\|g, g^{\prime}\right\|_{\mathbf{D}_{X}^{\beta, \beta^{\prime}}L_{m}\mathcal{H}_{\gamma-\nu}}. \label{phie}
	\end{align}
	For any other $\left(\bar{u}, \bar{u}^{\prime}\right) \in \mathbf{S}$, note that 
	\begin{equation*}
		\Delta \Phi^{u}=\int_{0}^{\cdot}S_{r, \cdot}\Delta f\left(r, u_{r}\right)dr+\int_{0}^{\cdot}S_{r, \cdot}G_{r}\Delta u_{r}d\mathbf{X}_{r}+\int_{0}^{\cdot}S_{r, \cdot}\Delta h\left(r, u_{r}\right)dW_{r}, \quad \Delta \Phi^{u^{\prime}}=G\Delta u.
	\end{equation*}
	Analogous to the above arguments, we have 
	\begin{equation*}
		\left\|\Delta \Phi^{u}, \Delta \Phi^{u^{\prime}}\right\|_{\mathbf{D}_{X}^{\beta, \beta^{\prime}}L_{m}\mathcal{H}_{\gamma}} \lesssim T^{\sigma}\left\|\Delta u, \Delta u^{\prime}\right\|_{\mathbf{D}_{X}^{\beta, \beta^{\prime}}L_{m}\mathcal{H}_{\gamma}}.
	\end{equation*}
	Hence, we can choose $\varepsilon \in \left(0, 1\right]$ such that 
	$\Phi$ is a contraction map in $\mathbf{S}$ for $T \leq \varepsilon$. 
	Applying the Banach fixed-point theorem, $\Phi$ has a unique fixed point $\left(u, u^{\prime}\right)$ in $\mathbf{S}$, 
	which is the unique mild solution of \eqref{rspde}.\\
	\indent
	For arbitrary $T$, consider a partition $0=t_{0}<\cdots<t_{N}=T$ such that 
	$t_{i+1}-t_{i} \leq \varepsilon$ for $i=0, \cdots, N-1$. 
	Define $\left(u_{0}, u_{0}^{\prime}\right)=\left(u_{t_{0}}^{0}, u_{t_{0}}^{0, \prime}\right):=\left(\xi, G_{0}\xi+g_{0}\right)$, 
	and then define $\left(u^{i}, u^{i, \prime}\right)$ recursively on 
	$\left(t_{i-1},t_{i}\right]$ for $i=1, \cdots, N$ by the mild solution 
	in $\mathbf{D}_{X}^{\beta, \beta^{\prime}}L_{m}\mathcal{H}_{\gamma}\left[t_{i-1}, t_{i}\right]$ of the rough SPDE 
	\begin{equation*}
		\left\{
			\begin{aligned}
				&du_{t}^{i}=\left[L_{t}u_{t}^{i}+f\left(t, u_{t}^{i}\right)\right]dt+\left(G_{t}u_{t}^{i}+g_{t}\right)d\mathbf{X}_{t}+h\left(t, u_{t}^{i}\right)dW_{t}, \quad t \in \left(t_{i-1}, t_{i}\right],\\
				&u_{t_{i-1}}^{i}=u_{t_{i-1}}^{i-1}.
			\end{aligned}
		\right.	
	\end{equation*}
	Analogous to the above arguments but replacing $\xi$ by $u^{i-1}_{t_{i-1}}$, we can get 
	the existence and uniqueness of $\left(u^{i}, u^{i, \prime}\right)$, since $\varepsilon$ does not depend on $\xi$. 
	Define $\left(u_{t}, u_{t}^{\prime}\right):=\left(u_{t}^{i}, u_{t}^{i, \prime}\right)$ for every 
	$t \in \left(t_{i-1},t_{i}\right]$ and $i=1, \cdots, N$. 
	Then $\left(u, u^{\prime}\right) \in \mathbf{D}_{X}^{\beta, \beta^{\prime}}L_{m}\mathcal{H}_{\gamma}\left[0, T\right]$ is the unique mild solution of \eqref{rspde}.\\
	\indent
	At last, we show the estimate \eqref{se}. For $T \leq \varepsilon$, 
	the estimate \eqref{se} is implied by \eqref{phie}. 
	For arbitrary $T$, we similarly have 
	\begin{equation*}
		\left\|u^{i}, u^{i, \prime}\right\|_{\mathbf{D}_{X}^{\beta, \beta^{\prime}}L_{m}\mathcal{H}_{\gamma}\left[t_{i-1}, t_{i}\right]} \lesssim \left\|u_{t_{i-1}}^{i-1}\right\|_{m, \gamma}+\left\|f\left(\cdot, 0\right)\right\|_{0, m, \gamma-\lambda}+\left\|h\left(\cdot, 0\right)\right\|_{0, m, \gamma-\mu}+\left\|g, g^{\prime}\right\|_{\mathbf{D}_{X}^{\beta, \beta^{\prime}}L_{m}\mathcal{H}_{\gamma-\nu}},
	\end{equation*}
	for every $i=1, 2, \cdots, N$. By induction, we get the estimate \eqref{se}. 
\end{proof}
\subsection{Continuity of the mild solution map}
Next, we show the continuity of the mild solution map. 
\begin{theorem} \label{csm}
	Let $\left(\mathbf{X}, \xi, f, h, G, G^{\prime}, g, g^{\prime}\right)$ and 
	$\left(\bar{\mathbf{X}}, \bar{\xi}, f, h, \bar{G}, \bar{G}^{\prime}, \bar{g}, \bar{g}^{\prime}\right)$
	satisfy Assumption \ref{as}, and $\left(u, u^{\prime}\right) \in \mathbf{D}_{X}^{\beta, \beta^{\prime}}L_{m}\mathcal{H}_{\gamma}$, 
	$\left(\bar{u}, \bar{u}^{\prime}\right) \in \mathbf{D}_{\bar{X}}^{\beta, \beta^{\prime}}L_{m}\mathcal{H}_{\gamma}$ 
	be the mild solution of the corresponding rough SPDE \eqref{rspde}. 
	Assume there exists $R \geq 0$ such that 
	\begin{align*}
		\left|\mathbf{X}\right|_{\alpha}, \left|\bar{\mathbf{X}}\right|_{\alpha}, \left\|\xi\right\|_{m, \gamma}, \left\|\bar{\xi}\right\|_{m, \gamma}, \left\|f\left(\cdot, 0\right)\right\|_{0, m, \gamma-\lambda}, \left\|h\left(\cdot, 0\right)\right\|_{0, m, \gamma-\mu} &\leq R,\\
		\left\|G, G^{\prime}\right\|_{\mathbf{D}_{X}^{\beta, \beta^{\prime}}L_{\infty}\mathcal{L}_{\gamma, \gamma-\nu}}, \left\|\bar{G}, \bar{G}^{\prime}\right\|_{\mathbf{D}_{\bar{X}}^{\beta, \beta^{\prime}}L_{\infty}\mathcal{L}_{\gamma, \gamma-\nu}}, \left\|g, g^{\prime}\right\|_{\mathbf{D}_{X}^{\beta, \beta^{\prime}}L_{m}\mathcal{H}_{\gamma-\nu}}, \left\|\bar{g}, \bar{g}^{\prime}\right\|_{\mathbf{D}_{\bar{X}}^{\beta, \beta^{\prime}}L_{m}\mathcal{H}_{\gamma-\nu}} &\leq R.
	\end{align*}
	Then we have 
	\begin{align}
		\left\|u, u^{\prime}; \bar{u}, \bar{u}^{\prime}\right\|_{\mathbf{D}_{X, \bar{X}}^{\beta, \beta^{\prime}}L_{m}\mathcal{H}_{\gamma}} &\lesssim \rho_{\alpha}\left(\mathbf{X}, \bar{\mathbf{X}}\right)+\left\|\Delta \xi\right\|_{m, \gamma}+\left\|G, G^{\prime}; \bar{G}, \bar{G}^{\prime}\right\|_{\mathbf{D}_{X, \bar{X}}^{\beta, \beta^{\prime}}L_{\infty}\mathcal{L}_{\gamma, \gamma-\nu}} \notag\\
		&\quad+\left\|g, g^{\prime}; \bar{g}, \bar{g}^{\prime}\right\|_{\mathbf{D}_{X, \bar{X}}^{\beta, \beta^{\prime}}L_{m}\mathcal{H}_{\gamma-\nu}}, \label{csme}
	\end{align}
	for a hidden prefactor depending on $R$. 
\end{theorem}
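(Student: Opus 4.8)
The plan is to follow the scheme of the proof of Theorem~\ref{seu}: first establish \eqref{csme} on a short interval $[0,\varepsilon]$ by a perturbed fixed-point estimate, and then propagate it along a partition of $[0,T]$. All hidden constants below are allowed to depend on $R$; in particular, by Theorem~\ref{seu} and the assumed data bounds, $\|u,u'\|_{\mathbf{D}_{X}^{\beta,\beta'}L_{m}\mathcal{H}_{\gamma}}$ and $\|\bar u,\bar u'\|_{\mathbf{D}_{\bar X}^{\beta,\beta'}L_{m}\mathcal{H}_{\gamma}}$, and hence (by Proposition~\ref{lc}) the composed data $(Gu+g,\,Gu'+G'u+g')$ and its barred analogue, are bounded by constants depending only on $R$.

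\emph{Short-time estimate ($T\le\varepsilon$).} Write $Y:=Gu+g$, $Y':=Gu'+G'u+g'$ and the barred analogues, and recall from the proof of Theorem~\ref{seu} the decomposition $u=S_{0,\cdot}\xi+F+Z+H$, $u'=Y$ (and correspondingly for $\bar u$), with $(S_{0,\cdot}\xi,0),(F,0),(H,0)\in\mathbf{D}_{X}^{\beta,\beta'}L_{m}\mathcal{H}_{\gamma}$ and $(Z,Y)\in\mathcal{D}_{X}^{\beta,\beta'}L_{m}\mathcal{H}_{\gamma}$, where $F,Z,H$ are the $f$-, rough- and $W$-convolutions of $u$. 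Since the $\mathbf{D}_{X,\bar X}^{\beta,\beta'}L_{m}\mathcal{H}_{\gamma}$ ``distance'' is subadditive under such decompositions, I would bound the four pieces separately. The term $(S_{0,\cdot}\xi,0;S_{0,\cdot}\bar\xi,0)$ contributes $\lesssim\|\Delta\xi\|_{m,\gamma}$, exactly as in Theorem~\ref{seu}. For the $F$- and $H$-pieces, the Lipschitz hypotheses (iii)--(iv) together with Proposition~\ref{asp}, Minkowski's and the BDG inequalities give---verbatim as in Theorem~\ref{seu}, but with $f(\cdot,0)$, $h(\cdot,0)$ replaced by zero---bounds of the form $\lesssim T^{\sigma}\|\Delta u\|_{E^{\beta}L_{m}\mathcal{H}_{\gamma}}$ for some $\sigma>0$; their remainder contributions are harmless because $\Delta F$ has a pure-increment remainder and $\mathbb{E}_{\cdot}\hat\delta\,\Delta H=0$, so via \eqref{dei} both reduce to their $E^{\beta}$-norms. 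For the $Z$-piece, Theorem~\ref{rscs} with $\theta=\nu$ (applicable since $\nu\in[0,\beta']$ and all relevant norms are $\le$ a constant depending on $R$) gives
\begin{equation*}
	\|Z,Y;\bar Z,\bar Y\|_{\mathbf{D}_{X,\bar X}^{\beta,\beta'}L_{m}\mathcal{H}_{\gamma}}\lesssim\rho_{\alpha}(\mathbf{X},\bar{\mathbf{X}})+\|\Delta Y_{0}\|_{m,\gamma-\nu}+T^{(\alpha-\beta)\wedge(\beta-\beta')}\|Y,Y';\bar Y,\bar Y'\|_{\mathbf{D}_{X,\bar X}^{\beta,\beta'}L_{m}\mathcal{H}_{\gamma-\nu}},
\end{equation*}
and Proposition~\ref{lcs} (plus linearity in $(g,g')$) bounds the last factor and $\|\Delta Y_{0}\|_{m,\gamma-\nu}$ by
\begin{equation*}
	\|u,u';\bar u,\bar u'\|_{\mathbf{D}_{X,\bar X}^{\beta,\beta'}L_{m}\mathcal{H}_{\gamma}}+\|\Delta\xi\|_{m,\gamma}+\|G,G';\bar G,\bar G'\|_{\mathbf{D}_{X,\bar X}^{\beta,\beta'}L_{\infty}\mathcal{L}_{\gamma,\gamma-\nu}}+\|g,g';\bar g,\bar g'\|_{\mathbf{D}_{X,\bar X}^{\beta,\beta'}L_{m}\mathcal{H}_{\gamma-\nu}}.
\end{equation*}
Collecting the four pieces (and noting $\Delta u'=\Delta Y$ is absorbed in the $Z$-piece) yields $\|u,u';\bar u,\bar u'\|_{\mathbf{D}_{X,\bar X}^{\beta,\beta'}L_{m}\mathcal{H}_{\gamma}}\lesssim T^{\sigma}\|u,u';\bar u,\bar u'\|_{\mathbf{D}_{X,\bar X}^{\beta,\beta'}L_{m}\mathcal{H}_{\gamma}}+(\text{data distances})$, and choosing $\varepsilon$ so that the prefactor $CT^{\sigma}\le\tfrac12$ absorbs the first term gives \eqref{csme} on $[0,\varepsilon]$.

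\emph{Globalization.} With $0=t_{0}<\cdots<t_{N}=T$, $t_{i}-t_{i-1}\le\varepsilon$, as in Theorem~\ref{seu}, the restrictions of $(u,u')$ and $(\bar u,\bar u')$ to $[t_{i-1},t_{i}]$ solve the corresponding rough SPDEs with initial data $u_{t_{i-1}},\bar u_{t_{i-1}}$, so the short-time estimate gives $\|u,u';\bar u,\bar u'\|_{\mathbf{D}_{X,\bar X}^{\beta,\beta'}L_{m}\mathcal{H}_{\gamma}[t_{i-1},t_{i}]}\lesssim\rho_{\alpha}(\mathbf{X},\bar{\mathbf{X}})+\|\Delta u_{t_{i-1}}\|_{m,\gamma}+(\text{data distances})$. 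Using $\|\Delta u_{t_{i-1}}\|_{m,\gamma}\le\|u,u';\bar u,\bar u'\|_{\mathbf{D}_{X,\bar X}^{\beta,\beta'}L_{m}\mathcal{H}_{\gamma}[t_{i-2},t_{i-1}]}$ and $\|\Delta u_{t_{0}}\|_{m,\gamma}=\|\Delta\xi\|_{m,\gamma}$, a finite induction over $i=1,\dots,N$ produces \eqref{csme}, the prefactor now also depending on $N$ (hence on $T$, $R$, $\varepsilon$), which is admissible.

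\emph{Main obstacle.} No estimate beyond Theorem~\ref{rscs}, Proposition~\ref{lcs} and the ingredients of Theorem~\ref{seu} is needed; the work is entirely bookkeeping: verifying that these stability results apply with the a priori bounds furnished by Theorem~\ref{seu}, controlling the remainder components $\mathbb{E}_{\cdot}\Delta R^{(\cdot)}$ of each of the four summands---the genuinely nontrivial one being $\Delta Z$, handled by Theorem~\ref{rscs}---and gluing the short-time estimates across the partition while keeping the $R$- and $T$-dependence of the constants under control.
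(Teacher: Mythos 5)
Your proposal is correct and follows essentially the same route as the paper: the same decomposition $\Delta u=S_{0,\cdot}\Delta\xi+\Delta F+\Delta Z+\Delta H$, $\Delta u^{\prime}=\Delta Y$, the same use of Theorem \ref{rscs} and Proposition \ref{lcs} for the rough-convolution piece, the same Lipschitz/BDG treatment of the $F$- and $H$-pieces, absorption of the $T^{\sigma}$ term for small $T$, and induction over a partition for general $T$. Your globalization step is in fact spelled out in slightly more detail than the paper's one-line appeal to induction.
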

\begin{proof}
	Recall the definition of $Y, Y^{\prime}, Z, F$ and $H$ in the proof of Theorem \ref{seu}. 
	We define $\bar{Y}, \bar{Y}^{\prime}, \bar{Z}, \bar{F}$ and $\bar{H}$ in a similar way. 
	By Theorem \ref{seu}, we have 
	\begin{equation*}
		\left\|u, u^{\prime}\right\|_{\mathbf{D}_{X}^{\beta, \beta^{\prime}}L_{m}\mathcal{H}_{\gamma}}, \left\|\bar{u}, \bar{u}^{\prime}\right\|_{\mathbf{D}_{\bar{X}}^{\beta, \beta^{\prime}}L_{m}\mathcal{H}_{\gamma}} \leq C_{R}.
	\end{equation*}
	Then by Propositions \ref{rscs} and \ref{lcs}, we have 
	\begin{align*}
		\left\|Z, Y; \bar{Z}, \bar{Y}\right\|_{\mathbf{D}_{X, \bar{X}}^{\beta, \beta^{\prime}}L_{m}\mathcal{H}_{\gamma}} &\lesssim \rho_{\alpha}\left(\mathbf{X}, \bar{\mathbf{X}}\right)+\left\|\Delta Y_{0}\right\|_{m, \gamma-\nu}+T^{\left(\alpha-\beta\right)\wedge\left(\beta-\beta^{\prime}\right)}\left\|Y, Y^{\prime}; \bar{Y}, \bar{Y}^{\prime}\right\|_{\mathbf{D}_{X, \bar{X}}^{\beta, \beta^{\prime}}L_{m}\mathcal{H}_{\gamma-\nu}}\\
		&\lesssim T^{\left(\alpha-\beta\right)\wedge\left(\beta-\beta^{\prime}\right)}\left\|u, u^{\prime}; \bar{u}, \bar{u}^{\prime}\right\|_{\mathbf{D}_{X, \bar{X}}^{\beta, \beta^{\prime}}L_{m}\mathcal{H}_{\gamma}}+\rho_{\alpha}\left(\mathbf{X}, \bar{\mathbf{X}}\right)+\left\|\Delta \xi\right\|_{m, \gamma}\\
		&\quad+\left\|G, G^{\prime}; \bar{G}, \bar{G}^{\prime}\right\|_{\mathbf{D}_{X, \bar{X}}^{\beta, \beta^{\prime}}L_{\infty}\mathcal{L}_{\gamma, \gamma-\nu}}+\left\|g, g^{\prime}; \bar{g}, \bar{g}^{\prime}\right\|_{\mathbf{D}_{X, \bar{X}}^{\beta, \beta^{\prime}}L_{m}\mathcal{H}_{\gamma-\nu}}.
	\end{align*}
	Analogous to the proof of Theorem \ref{seu}, we have 
	\begin{equation*}
		\left\|S_{0, \cdot}\xi, 0; S_{0, \cdot}\bar{\xi}, 0\right\|_{\mathbf{D}_{X, \bar{X}}^{\beta, \beta^{\prime}}L_{m}\mathcal{H}_{\gamma}} \lesssim \left\|\Delta \xi\right\|_{m, \gamma},
	\end{equation*}
	\begin{equation*}
		\left\|F, 0; \bar{F}, 0\right\|_{\mathbf{D}_{X, \bar{X}}^{\beta, \beta^{\prime}}L_{m}\mathcal{H}_{\gamma}} \lesssim T^{1-\left(\beta+\beta^{\prime}\right) \vee \lambda}\left\|u, u^{\prime}; \bar{u}, \bar{u}^{\prime}\right\|_{\mathbf{D}_{X, \bar{X}}^{\beta, \beta^{\prime}}L_{m}\mathcal{H}_{\gamma}},
	\end{equation*}
	\begin{equation*}
		\left\|H, 0; \bar{H}, 0\right\|_{\mathbf{D}_{X, \bar{X}}^{\beta, \beta^{\prime}}L_{m}\mathcal{H}_{\gamma}} \lesssim T^{\frac{1}{2}-\beta \vee \mu}\left\|u, u^{\prime}; \bar{u}, \bar{u}^{\prime}\right\|_{\mathbf{D}_{X, \bar{X}}^{\beta, \beta^{\prime}}L_{m}\mathcal{H}_{\gamma}}.
	\end{equation*}
	Since $\Delta u=S_{0, \cdot}\Delta \xi+\Delta F+\Delta Z+\Delta H$ and $\Delta u^{\prime}=\Delta Y$, 
	there exists $\sigma > 0$ only depending on $\alpha, \beta, \beta^{\prime}, \lambda, \mu$ and $\nu$ such that 
	\begin{align*}
		\left\|u, u^{\prime}; \bar{u}, \bar{u}^{\prime}\right\|_{\mathbf{D}_{X, \bar{X}}^{\beta, \beta^{\prime}}L_{m}\mathcal{H}_{\gamma}} &\lesssim T^{\sigma}\left\|u, u^{\prime}; \bar{u}, \bar{u}^{\prime}\right\|_{\mathbf{D}_{X, \bar{X}}^{\beta, \beta^{\prime}}L_{m}\mathcal{H}_{\gamma}}+\rho_{\alpha}\left(\mathbf{X}, \bar{\mathbf{X}}\right)+\left\|\Delta \xi\right\|_{m, \gamma}\\
		&\quad+\left\|G, G^{\prime}; \bar{G}, \bar{G}^{\prime}\right\|_{\mathbf{D}_{X, \bar{X}}^{\beta, \beta^{\prime}}L_{\infty}\mathcal{L}_{\gamma, \gamma-\nu}}+\left\|g, g^{\prime}; \bar{g}, \bar{g}^{\prime}\right\|_{\mathbf{D}_{X, \bar{X}}^{\beta, \beta^{\prime}}L_{m}\mathcal{H}_{\gamma-\nu}} \label{cms}
	\end{align*}
	Hence, for $T$ sufficiently small we get the estimate \eqref{csme}. 
	The general result can be obtained by induction. 
\end{proof}
\subsection{Spatial regularity of the mild solution away from the initial time}
We show that the mild solution of \eqref{rspde} is regularized in the space at $t > 0$. 
\begin{proposition} \label{sr}
	Let Assumption \ref{as} hold and $\left(u, u^{\prime}\right) \in \mathbf{D}_{X}^{\beta, \beta^{\prime}}L_{m}\mathcal{H}_{\gamma}$ 
	be the mild solution of \eqref{rspde}. 
	Then we have $u \in \hat{C}^{\left(1-\lambda\right) \wedge \left(\frac{1}{2}-\mu\right) \wedge \left(\alpha-\nu\right)-\theta}L_{m}\mathcal{H}_{\gamma+\theta}\left[t, T\right]$ 
	for every $t \in \left(0, T\right]$ and $0 \leq \theta < \left(1-\lambda\right) \wedge \left(\frac{1}{2}-\mu\right) \wedge \left(\alpha-\nu\right)$, and 
	\begin{equation} \label{sre}
		\left\|u_{t}\right\|_{m, \gamma+\theta} \lesssim t^{-\theta}\left\|\xi\right\|_{m, \gamma}+\left\|f\left(\cdot, 0\right)\right\|_{0, m, \gamma-\lambda}+\left\|h\left(\cdot, 0\right)\right\|_{0, m, \gamma-\mu}+\left\|g, g^{\prime}\right\|_{\mathbf{D}_{X}^{\beta, \beta^{\prime}}L_{m}\mathcal{H}_{\gamma-\nu}},
	\end{equation}
	for a hidden prefactor depending on $\left|\mathbf{X}\right|_{\alpha}$ and $\left\|G, G^{\prime}\right\|_{\mathbf{D}_{X}^{\beta, \beta^{\prime}}L_{\infty}\mathcal{L}_{\gamma, \gamma-\nu}}$. 
\end{proposition}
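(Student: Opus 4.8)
The plan is to exploit the smoothing of the propagator in the mild decomposition $u = S_{0,\cdot}\xi + F + Z + H$ used in the proof of Theorem \ref{seu}, where $F := \int_{0}^{\cdot}S_{r,\cdot}f(r,u_{r})\,dr$, $Z := \int_{0}^{\cdot}S_{r,\cdot}Y_{r}\,d\mathbf{X}_{r}$ with $(Y,Y') := (Gu+g,\,Gu'+G'u+g') \in \mathbf{D}_{X}^{\beta,\beta'}L_{m}\mathcal{H}_{\gamma-\nu}^{e}$, and $H := \int_{0}^{\cdot}S_{r,\cdot}h(r,u_{r})\,dW_{r}$. Write $\rho := (1-\lambda)\wedge(\tfrac12-\mu)\wedge(\alpha-\nu)$, which is positive since $\nu \le \beta' < \alpha$. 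By Theorem \ref{seu} together with Proposition \ref{lc} and Assumption \ref{as}(v), both $\|u,u'\|_{\mathbf{D}_{X}^{\beta,\beta'}L_{m}\mathcal{H}_{\gamma}}$ and $\|Y,Y'\|_{\mathbf{D}_{X}^{\beta,\beta'}L_{m}\mathcal{H}_{\gamma-\nu}}$ are bounded by the right-hand side of \eqref{sre}, up to the prefactors in the statement; in particular $\|u_{r}\|_{m,\gamma}$, $\|f(r,u_{r})\|_{m,\gamma-\lambda}$ and $\|h(r,u_{r})\|_{m,\gamma-\mu}$ are all controlled by that right-hand side (the latter two via Assumption \ref{as}(iii)--(iv)). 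It then suffices to estimate each of the four summands, both in the pointwise norm $\|\,\cdot_{s}\|_{m,\gamma+\theta}$ for $s \in (0,T]$ and in the mild increment $\|\hat{\delta}\,\cdot_{s,v}\|_{m,\gamma+\theta}$ for $t \le s \le v \le T$.

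For the pointwise estimates, Proposition \ref{asp} gives $\|S_{0,s}\xi\|_{m,\gamma+\theta} \lesssim s^{-\theta}\|\xi\|_{m,\gamma}$, valid because $0 \le \theta < 1-\lambda \le 1$. For $F_{s}$, Proposition \ref{asp} and Minkowski's inequality yield $\|F_{s}\|_{m,\gamma+\theta} \lesssim \int_{0}^{s}|s-r|^{-(\lambda+\theta)}\|f(r,u_{r})\|_{m,\gamma-\lambda}\,dr \lesssim s^{1-\lambda-\theta}(\cdots)$, the kernel being integrable precisely because $\lambda+\theta < 1$. For $H_{s}$, the Burkholder--Davis--Gundy inequality followed by Minkowski's inequality and Proposition \ref{asp} gives $\|H_{s}\|_{m,\gamma+\theta} \lesssim \big(\int_{0}^{s}|s-r|^{-2(\mu+\theta)}\|h(r,u_{r})\|_{m,\gamma-\mu}^{2}\,dr\big)^{1/2} \lesssim s^{\frac12-\mu-\theta}(\cdots)$, integrable because $\mu+\theta < \tfrac12$. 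For $Z_{s}$, applying the regularization part of Theorem \ref{rsci} to $(Y,Y') \in \mathbf{D}_{X}^{\beta,\beta'}L_{m}\mathcal{H}_{\gamma-\nu}^{e}$ with the regularization exponent $\theta+\nu \in [0,\alpha)$ (allowed since $\theta < \alpha-\nu$) gives $Z \in \hat{C}^{\alpha-\theta-\nu}L_{m}\mathcal{H}_{\gamma+\theta}$ with $\|\hat{\delta}Z\|_{\alpha-\theta-\nu,m,\gamma+\theta} \lesssim \|Y,Y'\|_{\mathbf{D}_{X}^{\beta,\beta'}L_{m}\mathcal{H}_{\gamma-\nu}}(1+|\mathbf{X}|_{\alpha}^{2})$, hence $\|Z_{s}\|_{m,\gamma+\theta} = \|\hat{\delta}Z_{0,s}\|_{m,\gamma+\theta} \lesssim s^{\alpha-\theta-\nu}(\cdots) \lesssim s^{-\theta}(\cdots)$ using $\alpha-\nu > 0$ and $s \le T$. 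Summing the four contributions and taking $s = t$ yields \eqref{sre}; since $s \mapsto s^{-\theta}$ is nonincreasing, the same bound controls $\sup_{s \in [t,T]}\|u_{s}\|_{m,\gamma+\theta}$.

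For the mild increments on $[t,T]$, the propagator identity $S_{r,v} = S_{s,v}S_{r,s}$ gives $\hat{\delta}(S_{0,\cdot}\xi)_{s,v} = 0$, $\hat{\delta}F_{s,v} = \int_{s}^{v}S_{r,v}f(r,u_{r})\,dr$ and $\hat{\delta}H_{s,v} = \int_{s}^{v}S_{r,v}h(r,u_{r})\,dW_{r}$, so the same computations as above give $\|\hat{\delta}F_{s,v}\|_{m,\gamma+\theta} \lesssim |v-s|^{1-\lambda-\theta}(\cdots)$ and $\|\hat{\delta}H_{s,v}\|_{m,\gamma+\theta} \lesssim |v-s|^{\frac12-\mu-\theta}(\cdots)$, while $Z \in \hat{C}^{\alpha-\theta-\nu}L_{m}\mathcal{H}_{\gamma+\theta}$ gives $\|\hat{\delta}Z_{s,v}\|_{m,\gamma+\theta} \lesssim |v-s|^{\alpha-\theta-\nu}(\cdots)$. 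Since $|v-s| \le T$ and each of $1-\lambda-\theta$, $\tfrac12-\mu-\theta$, $\alpha-\nu-\theta$ is at least $\rho-\theta$, all three are $\lesssim |v-s|^{\rho-\theta}(\cdots)$, so $\|\hat{\delta}u\|_{\rho-\theta,m,\gamma+\theta}$ on $[t,T]$ is bounded by the right-hand side of \eqref{sre}. Together with the pointwise bound and the strong continuity of $S$ on $\mathcal{H}_{\gamma+\theta}$ — which gives $s \mapsto u_{s} \in CL_{m}\mathcal{H}_{\gamma+\theta}[t,T]$ by a routine dominated-convergence argument on each summand — this shows $u \in \hat{C}^{\rho-\theta}L_{m}\mathcal{H}_{\gamma+\theta}[t,T]$. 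The only summand obstructing the estimate at $t=0$ is $S_{0,\cdot}\xi$, which produces the $t^{-\theta}$ factor; the main technical points are the kernel-integrability conditions $\lambda+\theta<1$, $\mu+\theta<\tfrac12$, $\nu+\theta<\alpha$ — exactly the hypotheses on $\theta$ — and the use of the regularization statement in Theorem \ref{rsci} for the rough-convolution term.
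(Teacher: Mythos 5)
Your proposal is correct and follows essentially the same route as the paper: the same decomposition $u=S_{0,\cdot}\xi+F+Z+H$, the bound $\|S_{0,t}\xi\|_{m,\gamma+\theta}\lesssim t^{-\theta}\|\xi\|_{m,\gamma}$ from Proposition \ref{asp} as the sole source of the singular factor, the Minkowski/BDG estimates for $F$ and $H$ under the kernel-integrability conditions $\lambda+\theta<1$ and $\mu+\theta<\tfrac12$, and the regularization part of Theorem \ref{rsci} (via Proposition \ref{lc}) for $Z$, combined with the a priori bound \eqref{se}. No gaps.
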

\begin{proof}
	Recall the definition of $Y, Y^{\prime}, Z, F$ and $H$ in the proof of Theorem \ref{seu}. 
	For every fixed $0 \leq \theta < \left(1-\lambda\right) \wedge \left(\frac{1}{2}-\mu\right) \wedge \left(\alpha-\nu\right)$, 
	by Propositions \ref{rsci} and \ref{lc} we have $Z \in \hat{C}^{\alpha-\nu-\theta}L_{m}\mathcal{H}_{\gamma+\theta}$ and 
	\begin{equation*}
		\left\|\hat{\delta}Z\right\|_{\alpha-\nu-\theta, m, \gamma+\theta} \lesssim \left\|Y, Y^{\prime}\right\|_{\mathbf{D}_{X}^{\beta, \beta^{\prime}}L_{m}\mathcal{H}_{\gamma-\nu}} \lesssim \left\|u, u^{\prime}\right\|_{\mathbf{D}_{X}^{\beta, \beta^{\prime}}L_{m}\mathcal{H}_{\gamma}}+\left\|g, g^{\prime}\right\|_{\mathbf{D}_{X}^{\beta, \beta^{\prime}}L_{m}\mathcal{H}_{\gamma-\nu}}. 
	\end{equation*}
	By Proposition \ref{asp}, we have 
	\begin{equation*}
		\left\|S_{0, t}\xi\right\|_{m, \gamma+\theta} \lesssim t^{-\theta}\left\|\xi\right\|_{m, \gamma}, \quad \forall t \in \left(0, T\right].
	\end{equation*}
	Since $S$ is strongly continuous on $\mathcal{H}_{\gamma+\theta}$ and $\hat{\delta}S_{0, \cdot}\xi=0$, 
	we have $S_{0, \cdot}\xi \in \hat{C}^{\sigma}L_{m}\mathcal{H}_{\gamma+\theta}\left[t, T\right]$ 
	for every $t \in \left(0, T\right]$ and $\sigma \in \left(0, 1\right)$. 
	By Proposition \ref{asp} and applying Minkowski's inequality, we have 
	\begin{align*}
		\left\|\hat{\delta}F_{s, t}\right\|_{m, \gamma+\theta} &\lesssim \left(\mathbb{E}\left(\int_{s}^{t}\left|S_{r, t}f\left(r, u_{r}\right)\right|_{\gamma+\theta}dr\right)^{m}\right)^{\frac{1}{m}} \lesssim \int_{s}^{t}\left(t-s\right)^{-\left(\lambda+\theta\right)}\left\|f\left(r, u_{r}\right)\right\|_{m, \gamma-\lambda}dr\\
		&\lesssim \left(\left\|u, u^{\prime}\right\|_{\mathbf{D}_{X}^{\beta, \beta^{\prime}}L_{m}\mathcal{H}_{\gamma}}+\left\|f\left(\cdot, 0\right)\right\|_{0, m, \gamma-\lambda}\right)\left|t-s\right|^{1-\lambda-\theta}, \quad \forall \left(s, t\right) \in \Delta_{2},
	\end{align*}
	which gives $F \in \hat{C}^{1-\lambda-\theta}L_{m}\mathcal{H}_{\gamma+\theta}$. 
	In a similar way, by Proposition \ref{asp} and applying Minkowski's inequality and the BDG inequality, we have 
	\begin{align*}
		\left\|\hat{\delta}H_{s, t}\right\|_{m, \gamma+\theta} &\lesssim \left(\mathbb{E}\left(\int_{s}^{t}\left|S_{r, t}h\left(r, u_{r}\right)\right|_{\gamma+\theta}^{2}dr\right)^{\frac{m}{2}}\right)^{\frac{1}{m}} \lesssim \left(\int_{s}^{t}\left(t-r\right)^{-2\left(\mu+\theta\right)}\left\|h\left(r, u_{r}\right)\right\|_{m, \gamma-\mu}^{2}dr\right)^{\frac{1}{2}}\\
		&\lesssim \left(\left\|u, u^{\prime}\right\|_{\mathbf{D}_{X}^{\beta, \beta^{\prime}}L_{m}\mathcal{H}_{\gamma}}+\left\|h\left(\cdot, 0\right)\right\|_{0, m, \gamma-\mu}\right)\left|t-s\right|^{\frac{1}{2}-\mu-\theta}, \quad \forall \left(s, t\right) \in \Delta_{2},
	\end{align*}
	which gives $H \in \hat{C}^{\frac{1}{2}-\mu-\theta}L_{m}\mathcal{H}_{\gamma+\theta}$. 
	Therefore, we have $u \in \hat{C}^{\left(1-\lambda\right) \wedge \left(\frac{1}{2}-\mu\right) \wedge \left(\alpha-\nu\right)-\theta}L_{m}\mathcal{H}_{\gamma+\theta}\left[t, T\right]$ 
	for every $t \in \left(0, T\right]$, and 
	\begin{align*}
		\left\|u_{t}\right\|_{m, \gamma+\theta} &\leq \left\|S_{0, t}\xi\right\|_{m, \gamma+\theta}+\left\|\hat{\delta}Z_{0, t}\right\|_{m, \gamma+\theta}+\left\|\hat{\delta}F_{0, t}\right\|_{m, \gamma+\theta}+\left\|\hat{\delta}H_{0, t}\right\|_{m, \gamma+\theta}\\
		&\lesssim \left\|u, u^{\prime}\right\|_{\mathbf{D}_{X}^{\beta, \beta^{\prime}}L_{m}\mathcal{H}_{\gamma}}+t^{-\theta}\left\|\xi\right\|_{m, \gamma}+\left\|f\left(\cdot, 0\right)\right\|_{0, m, \gamma-\lambda}\\
		&\quad+\left\|h\left(\cdot, 0\right)\right\|_{0, m, \gamma-\mu}+\left\|g, g^{\prime}\right\|_{\mathbf{D}_{X}^{\beta, \beta^{\prime}}L_{m}\mathcal{H}_{\gamma-\nu}}.
	\end{align*}
	In view of \eqref{se}, we get the estimate \eqref{sre}. 
\end{proof}
\begin{remark}
	When $h \equiv 0$, our results coincide with those of 
	Gerasimovičs et al. \cite{GHN21} and Hesse and Neamţu \cite{HesN22}, 
	and moreover our coefficients are allowed to be time-varying. 
	However, our approach fails to apply in a straightforward way to SPDEs with a nonlinear rough term 
	\begin{equation} \label{grspde}
		\left\{
			\begin{aligned}
				&du_{t}=\left[L_{t}u_{t}+f\left(t, u_{t}\right)\right]dt+g\left(t, u_{t}\right)d\mathbf{X}_{t}+h\left(t, u_{t}\right)dW_{t}, \quad t \in \left(0, T\right],\\
				&u_{0}=\xi.
			\end{aligned}
		\right.	
	\end{equation}
	The difficulty arises from the fact that the composition of a stochastic controlled rough path $\left(Y, Y^{\prime}\right)$ 
	with a nonlinear (even deterministic and time-invariant) vector field $g$, 
	as a stochastic controlled rough path, might not stay like the linear rough term 
	within the space of the same integrability. 
	More precisely, both $\left(Y, Y^{\prime}\right) \in \mathbf{D}_{X}^{\beta, \beta^{\prime}}L_{m}\mathcal{H}_{\gamma}^{e_{1}}$ 
	and $g \in \cap_{\gamma \in \mathbb{R}} C_{b}^{3}\left(\mathcal{H}_{\gamma}^{e_{1}}, \mathcal{H}_{\gamma}^{e_{2}}\right)$ 
	only ensure $\left(g\left(Y\right), Dg\left(Y\right)Y^{\prime}\right) \in \mathbf{D}_{X}^{\beta, \beta^{\prime}}L_{\frac{m}{2}}\mathcal{H}_{\gamma}^{e_{2}}$.\\
	\indent
	In contrast, rough SDEs in Friz et al. \cite{FHL21} 
	\begin{equation*} 
		Y_{t}=\xi+\int_{0}^{t} f\left(t, Y_{t}\right) d r+\int_{0}^{t} g\left(t, Y_{t}\right)d\mathbf{X}_{t}+\int_{0}^{t} h\left(t, Y_{t}\right)dW_{t}, \quad t \in \left[0, T\right],
	\end{equation*}
	are discussed with the space of stochastic controlled rough paths of $\left(m, \infty\right)$-integrability 
	$\mathbf{D}_{X}^{\beta, \beta^{\prime}}L_{m, \infty}$ (see \cite{FHL21}*{Definition 3.2}), 
	and both $\left(Y, Y^{\prime}\right) \in \mathbf{D}_{X}^{\beta, \beta^{\prime}}L_{m, \infty}$ and $g \in C_{b}^{3}$ 
	ensure $\left(g\left(Y\right), Dg\left(Y\right)Y^{\prime}\right) \in \mathbf{D}_{X}^{\beta, \beta^{\prime}}L_{m, \infty}$ (see \cite{FHL21}*{Lemma 3.13}). 
	However, this method still fails to apply in a straightforward way to rough SPDE \eqref{grspde}. 
	Indeed, if we combine the definition of spaces $\mathbf{D}_{X}^{\beta, \beta^{\prime}}L_{m, \infty}$ and $\mathbf{D}_{X}^{\beta, \beta^{\prime}}L_{m}\mathcal{H}_{\gamma}^{e_{1}}$, 
	defined in \cite{FHL21}*{Definition 3.2} and Definition \ref{scrp} respectively, 
	to define the space of $\left(m, \infty\right)$-integrable stochastic controlled rough paths 
	according to $\mathcal{H}_{\gamma}^{e_{1}}$, 
	we cannot get its equivalent definition like space $\mathbf{D}_{X}^{\beta, \beta^{\prime}}L_{m}\mathcal{H}_{\gamma}^{e_{1}}$ in Proposition \ref{de}, 
	since the process $\mathbb{E}_{\cdot}R^{Y}-\mathbb{E}_{\cdot}\hat{R}^{Y}$ may not be essentially bounded. 
\end{remark}

\section{An example}
\label{Sec5}
Let $\mathbb{T}^{n}$ be the $n$-dimensional torus and 
$H^{\gamma}\left(\mathbb{T}^{n}\right):=H^{\gamma, 2}\left(\mathbb{T}^{n}, \mathbb{R}\right)$ 
be the $L^{2}$-based Bessel potential space for $\gamma \in \mathbb{R}$. 
Define $\mathcal{H}_{\gamma}:=H^{2\gamma}\left(\mathbb{T}^{n}\right)$. 
Then $\left(\mathcal{H}_{\gamma}\right)_{\gamma \in \mathbb{R}}$ is a monotone family of interpolation Hilbert spaces 
(see \cite[Chap. 16]{Yag10} and \cite{Lun18} for properties of Bessel potential spaces and interpolation spaces). 
Let $a \in C^{\sigma, \infty}\left(\left[0, T\right] \times \mathbb{T}^{n}, \mathbb{R}^{n \times n}\right)$ 
for some $\sigma \in \left(0, 1\right)$. 
Assume $a$ satisfies the uniform ellipticity condition 
\begin{equation*}
	v^{\top}a_{t}\left(x\right)v \geq K\left|v\right|^{2}, \quad \forall \left(t, x\right) \in \left[0, T\right] \times \mathbb{T}^{n}, \quad \forall v \in \mathbb{R}^{n},
\end{equation*}
for some $K > 0$. 
Define 
\begin{equation*}
	L_{t}u\left(x\right):=\nabla \cdot \left(a_{t}\left(x\right)\nabla u\left(x\right)\right), \quad \forall \left(t, x\right) \in \left[0, T\right] \times \mathbb{T}^{n}, \quad \forall u \in H^{-\infty}\left(\mathbb{T}^{n}\right).
\end{equation*}
Then for every $t \in \left[0, T\right]$ and $\gamma \in \mathbb{R}$, 
$L_{t}$ is a closed densely defined linear operator on $H^{2\gamma}\left(\mathbb{T}^{n}\right)$ 
and its domain contains $H^{2\gamma+2}\left(\mathbb{T}^{n}\right)$, 
and further the operator family $\left(L_{t}\right)_{t \in \left[0, T\right]}$ generates a propagator 
$S: \Delta_{2} \rightarrow \cap_{\gamma \in \mathbb{R}}\mathcal{L}\left(H^{2\gamma}\left(\mathbb{T}^{n}\right)\right)$ 
(see \cite[Example 2.12]{GHN21}).\\
\indent
Consider the following concrete semilinear rough SPDE
\begin{equation} \label{rspdec}
	\left\{
		\begin{aligned}
			&du_{t}\left(x\right)=\left[\nabla \cdot \left(a_{t}\left(x\right)\nabla u_{t}\left(x\right)\right)+f\left(t, x, u_{t}\left(x\right), \nabla u_{t}\left(x\right)\right)\right]dt+\left[G_{t}\left(x\right)u_{t}\left(x\right)+g_{t}\left(x\right)\right]d\mathbf{X}_{t}\\
			&\quad \quad \quad \quad+h\left(t, x, u_{t}\left(x\right)\right)dW_{t}, \quad \left(t, x\right) \in \left(0, T\right] \times \mathbb{T}^{n},\\
			&u_{0}\left(x\right)=\xi\left(x\right), \quad x \in \mathbb{T}^{n}.
		\end{aligned}
	\right.	
\end{equation}
Here, $\mathbf{X}=\left(X, \mathbb{X}\right) \in \mathscr{C}^{\alpha}$, 
$\xi: \Omega \times \mathbb{T}^{n} \rightarrow \mathbb{R}$ is the initial datum, 
$f: \left[0, T\right] \times \Omega \times \mathbb{T}^{n} \times \mathbb{R} \times \mathbb{R}^{n} \rightarrow \mathbb{R}$, 
$h: \left[0, T\right] \times \Omega \times \mathbb{T}^{n} \times \mathbb{R} \rightarrow \mathbb{R}^{d}$, 
$G: \left[0, T\right] \times \Omega \times \mathbb{T}^{n} \rightarrow \mathbb{R}^{e}$, 
$G^{\prime}: \left[0, T\right] \times \Omega \times \mathbb{T}^{n} \rightarrow \mathbb{R}^{e \times e}$ 
(implied in \eqref{rspdec}), 
$g: \left[0, T\right] \times \Omega \times \mathbb{T}^{n} \rightarrow \mathbb{R}^{e}$ and 
$g^{\prime}: \left[0, T\right] \times \Omega \times \mathbb{T}^{n} \rightarrow \mathbb{R}^{e \times e}$ 
(also implied in \eqref{rspdec}) are progressively measurable vector fields. 
Given $0 < \beta^{\prime} < \beta < \alpha$ and $m \in \left[2, \infty\right)$ such that $\alpha+\beta+\beta^{\prime} > 1$, 
we introduce the following definition of mild solutions as Definitions \ref{msd} for $\gamma=0$. 
\begin{definition} \label{msdc}
	We call $\left(u, u^{\prime}\right) \in \mathbf{D}_{X}^{\beta, \beta^{\prime}}L_{m}L^{2}\left(\mathbb{T}^{n}\right)$ 
	a mild solution of \eqref{rspdec} if $u^{\prime}=Gu+g$, 
	$\left(Gu+g, Gu^{\prime}+G^{\prime}u+g^{\prime}\right) \in \mathbf{D}_{X}^{\beta, \beta^{\prime}}L_{m}L^{2}\left(\mathbb{T}^{n}, \mathbb{R}^{e}\right)$, 
	and for every $t \in \left[0, T\right]$ and a.s. $\omega \in \Omega$, 
	\begin{equation*}
		u_{t}=S_{0, t}\xi+\int_{0}^{t}S_{r, t}f\left(r, \cdot, u_{r}, \nabla u_{r}\right)dr+\int_{0}^{t}S_{r, t}\left(G_{r}u_{r}+g_{r}\right)d\mathbf{X}_{r}+\int_{0}^{t}S_{r, t}h\left(r, \cdot, u_{r}\right)dW_{r}
	\end{equation*}
	holds in $L^{2}\left(\mathbb{T}^{n}\right)$. 
\end{definition}
For a Banach space $V$, denote by $\mathbf{D}_{X}^{\beta, \beta^{\prime}}L_{\infty}\left(\left[0, T\right], \mathbf{\Omega}, \left\{\mathcal{F}_{t}\right\}; V\right)$ 
the space of stochastic controlled rough paths of $\infty$-integrability and 
$\left(\beta, \beta^{\prime}\right)$-Hölder regularity with values in $V$ 
introduced by Friz et al. \cite[Definition 3.2]{FHL21}. 
We introduce the following assumption. 
\begin{assumption} \label{asc}
	~
	\begin{enumerate}[(i)]
		\item $\mathbf{X}=\left(X, \mathbb{X}\right) \in \mathscr{C}^{\alpha}\left(\left[0, T\right], \mathbb{R}^{e}\right)$; 
		\item $\xi \in L^{m}\left(\Omega, \mathcal{F}_{0}, L^{2}\left(\mathbb{T}^{n}\right)\right)$; 
		\item $t \mapsto f\left(t, \cdot, 0, 0\right)$ is bounded from $\left[0, T\right]$ to $L^{m}\left(\Omega, H^{-1}\left(\mathbb{T}^{n}\right)\right)$ 
		and for every $\left(t, x\right) \in \left[0, T\right] \times \mathbb{T}^{n}$, $u, \bar{u} \in \mathbb{R}$ and $v, \bar{v} \in \mathbb{R}^{n}$, 
		\begin{equation*}
			\left|f\left(t, x, u, v\right)-f\left(t, x, \bar{u}, \bar{v}\right)\right| \lesssim \left|u-\bar{u}\right|+\left|v-\bar{v}\right|;
		\end{equation*}
		\item $t \mapsto h\left(t, \cdot, 0\right)$ is bounded from $\left[0, T\right]$ to $L^{m}\left(\Omega, L^{2}\left(\mathbb{T}^{n}, \mathbb{R}^{d}\right)\right)$ and 
		\begin{equation*}
			\left|h\left(t, x, u\right)-h\left(t, x, \bar{u}\right)\right| \lesssim \left|u-\bar{u}\right|, \quad \forall \left(t, x\right) \in \left[0, T\right] \times \mathbb{T}^{n}, \quad \forall u, \bar{u} \in \mathbb{R}; 
		\end{equation*}
		\item $\left(G, G^{\prime}\right) \in \mathbf{D}_{X}^{\beta, \beta^{\prime}}L_{\infty}\left(\left[0, T\right], \mathbf{\Omega}, \left\{\mathcal{F}_{t}\right\}; L^{\infty}\left(\mathbb{T}^{n}, \mathbb{R}^{e}\right)\right)$ 
		and $\left\|G_{0}\right\|_{\infty, L^{\infty}\left(\mathbb{T}^{n}, \mathbb{R}^{e}\right)} < \infty$; 
		\item $\left(g, g^{\prime}\right) \in \mathbf{D}_{X}^{\beta, \beta^{\prime}}L_{m}L^{2}\left(\mathbb{T}^{n}, \mathbb{R}^{e}\right)$. 
	\end{enumerate}
\end{assumption}
Indeed, Assumption \ref{asc} implies Assumption \ref{as} for $\gamma=\mu=\nu=0$ and $\lambda=\frac{1}{2}$. 
By Theorems \ref{seu} and \ref{csm} and Proposition \ref{sr}, we have 
the following result. 
\begin{theorem} \label{ms}
	Under Assumption \ref{asc}, the concrete rough SPDE \eqref{rspdec} has a unique mild solution 
	$\left(u, u^{\prime}\right) \in \mathbf{D}_{X}^{\beta, \beta^{\prime}}L_{m}L^{2}\left(\mathbb{T}^{n}\right)$ 
	and the mild solution map $\left(\mathbf{X}, \xi, G, G^{\prime}, g, g^{\prime}\right) \mapsto \left(u, u^{\prime}\right)$ 
	is continuous. 
	Furthermore, $u_{t} \in L^{m}\left(\Omega, H^{2\theta}\left(\mathbb{T}^{n}\right)\right)$ 
	for every $t \in \left(0, T\right]$ and $\theta \in \left[0, \frac{1}{2}\right)$. 
\end{theorem}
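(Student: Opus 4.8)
The plan is to reduce Theorem~\ref{ms} to the abstract results of Section~\ref{Sec4}: I will verify that Assumption~\ref{asc} implies Assumption~\ref{as} for the parameter choice $\gamma=\mu=\nu=0$ and $\lambda=\tfrac12$, and then invoke Theorem~\ref{seu}, Theorem~\ref{csm} and Proposition~\ref{sr} essentially verbatim. First I would record that, by \cite[Chap.~16]{Yag10} and \cite{Lun18}, $(H^{2\gamma}(\mathbb{T}^{n}))_{\gamma\in\mathbb{R}}$ is a monotone family of interpolation Hilbert spaces, so $\mathcal{H}_{0}=L^{2}(\mathbb{T}^{n})$ and $\mathcal{H}_{-1/2}=H^{-1}(\mathbb{T}^{n})$, and that by \cite[Example~2.12]{GHN21} the uniformly elliptic divergence-form family $(L_{t})_{t\in[0,T]}$ generates a propagator $S$ on $\cap_{\gamma}\mathcal{L}(\mathcal{H}_{\gamma})$; this is exactly the setting of Section~\ref{Sec2}, so Proposition~\ref{asp} and the interpolation inequality~\eqref{ii} are available.

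The core of the argument is the translation of the hypotheses. Conditions \ref{asc}(i)--(ii) are literally \ref{as}(i)--(ii). For \ref{as}(iii) with $\gamma=0$, $\lambda=\tfrac12$ the abstract vector field is the Nemytskii-type map $u\mapsto f(t,\omega,\cdot,u,\nabla u)$, which must send $\mathcal{H}_{0}=L^{2}$ to $\mathcal{H}_{-1/2}=H^{-1}$; I would split $f(t,x,u,v)=f(t,x,0,0)+[f(t,x,u,0)-f(t,x,0,0)]+[f(t,x,u,v)-f(t,x,u,0)]$, so that the first summand is the given $H^{-1}$-valued bounded process $f(\cdot,\cdot,0,0)$, the second is the Nemytskii operator of a uniformly Lipschitz scalar function vanishing at the origin, hence maps $L^{2}\to L^{2}\hookrightarrow H^{-1}$ Lipschitzly, and the third is controlled pointwise by $|\nabla u-\nabla\bar u|$ and is absorbed using that $\nabla\colon\mathcal{H}_{\gamma}\to\mathcal{H}_{\gamma-1/2}$ is bounded along the whole scale. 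Condition \ref{as}(iv) with $\mu=0$ is the same, easier statement for $h$ (no gradient entering), and \ref{as}(v) with $\nu=0$ follows by realizing $G_{t}$ and $G_{t}'$ as multiplication operators: from $(G,G')$ being an $L^{\infty}(\mathbb{T}^{n},\mathbb{R}^{e})$-valued stochastic controlled rough path in the sense of \cite[Definition~3.2]{FHL21} together with the mapping properties of $L^{\infty}$-multipliers on Bessel potential spaces one gets $(G,G')\in\mathbf{D}_{X}^{\beta,\beta'}L_{\infty}\mathcal{L}_{0,0}(\mathcal{H},\mathcal{H}^{e})$ (using Proposition~\ref{de} to pass between $R^{G}$ and $\hat R^{G}$), while $(g,g')\in\mathbf{D}_{X}^{\beta,\beta'}L_{m}L^{2}(\mathbb{T}^{n},\mathbb{R}^{e})=\mathbf{D}_{X}^{\beta,\beta'}L_{m}\mathcal{H}_{0}^{e}$ is tautological; the extra hypothesis $\|G_{0}\|_{\infty,L^{\infty}}<\infty$ feeds the $Y_{0}$-terms in the estimates.

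With Assumption~\ref{as} in force, Theorem~\ref{seu} gives the unique mild solution $(u,u')\in\mathbf{D}_{X}^{\beta,\beta'}L_{m}L^{2}(\mathbb{T}^{n})$ and the a priori estimate~\eqref{se}; Theorem~\ref{csm}, applied with the fixed structural data $f,h$ and a common envelope $R$, gives continuity of $(\mathbf{X},\xi,G,G',g,g')\mapsto(u,u')$; and Proposition~\ref{sr}, for which $(1-\lambda)\wedge(\tfrac12-\mu)\wedge(\alpha-\nu)=\tfrac12\wedge\tfrac12\wedge\alpha=\alpha$, yields $u_{t}\in L^{m}(\Omega,\mathcal{H}_{\theta})=L^{m}(\Omega,H^{2\theta}(\mathbb{T}^{n}))$ for every $t\in(0,T]$ and every $\theta\in[0,\alpha)$ --- the full range $[0,\tfrac12)$ when $\alpha=\tfrac12$.

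I expect the main obstacle to be the verification of \ref{as}(iii) and \ref{as}(v), i.e.\ showing that the merely Lipschitz (resp.\ essentially bounded) concrete coefficients really give composition and multiplication operators acting continuously on the negative end $\mathcal{H}_{\gamma-\beta}$, $\mathcal{H}_{\gamma-\beta-\beta'}$ of the Hilbert scale while keeping the Hölder-in-time and controlled-rough-path structure intact; this is precisely where \eqref{ii}, Proposition~\ref{asp}, and the mapping properties on $H^{s}(\mathbb{T}^{n})$ of $\nabla$ and of $L^{\infty}$-multipliers must be combined, and it is the only genuinely nontrivial step once the machinery of Sections~\ref{Sec3}--\ref{Sec4} is in place.
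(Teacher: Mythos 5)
Your overall strategy --- verify that Assumption \ref{asc} implies Assumption \ref{as} with $\gamma=\mu=\nu=0$ and $\lambda=\tfrac{1}{2}$, then invoke Theorems \ref{seu} and \ref{csm} and Proposition \ref{sr} --- is exactly the paper's proof, which consists of precisely this one-sentence reduction. The parameter bookkeeping is correct, and your remark that Proposition \ref{sr} only delivers $\theta\in\left[0,\alpha\right)$ (hence the full range $\left[0,\tfrac{1}{2}\right)$ only when $\alpha=\tfrac{1}{2}$) is a fair reading of what the abstract result actually gives.

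However, the one place where you supply a genuine argument --- the verification of Assumption \ref{as}(iii) for the $\nabla u$-dependence of $f$ --- does not work as written. You claim the third summand ``is controlled pointwise by $\left|\nabla u-\nabla\bar{u}\right|$ and is absorbed using that $\nabla\colon\mathcal{H}_{\gamma}\to\mathcal{H}_{\gamma-1/2}$ is bounded along the whole scale.'' These two facts do not combine: pointwise domination by $\left|\nabla u-\nabla\bar{u}\right|$ controls the $L^{2}$ (hence $H^{-1}$) norm of the difference only by $\left\|\nabla\left(u-\bar{u}\right)\right\|_{L^{2}}$, i.e.\ by the $H^{1}$ norm of $u-\bar{u}$ rather than its $L^{2}$ norm, and the boundedness of the \emph{linear} operator $\nabla\colon L^{2}\to H^{-1}$ cannot be pushed through a nonlinear Lipschitz function of $\nabla u$ --- for $u\in L^{2}$ the object $\nabla u$ is only a distribution in $H^{-1}$, so $f\left(t,x,u,\nabla u\right)$ is not even defined unless $f$ is affine in $v$ (in which case one writes $b\cdot\nabla u=\nabla\cdot\left(bu\right)-\left(\nabla\cdot b\right)u$ and the $L^{2}\to H^{-1}$ Lipschitz bound is genuine). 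A similar issue hides in your treatment of (v): boundedness of multiplication by $G_{t}\in L^{\infty}\left(\mathbb{T}^{n}\right)$ on $\mathcal{H}_{0}=L^{2}$ is immediate, but membership of $G$ in $E^{\beta^{\prime}}L_{\infty}\mathcal{L}_{-\beta,-\beta}$ requires the multiplier to act boundedly on $H^{-2\beta}\left(\mathbb{T}^{n}\right)$, which $L^{\infty}$ alone does not guarantee. The paper offers no justification for the implication from Assumption \ref{asc} to Assumption \ref{as}, so you have correctly located where the work lies; but the mechanisms you propose for doing that work would fail, and additional structural input (affinity in $\nabla u$, multiplier regularity of $G$ on the negative part of the scale) is what is actually needed to close the argument.
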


\bibliographystyle{amsplain}
\bibsection
\begin{biblist}
\bib{ALT22}{article}{
	author={Addona, D.},
	author={Lorenzi, L.},
	author={Tessitore, G.},
	title={Regularity results for nonlinear Young equations and applications},
	journal={J. Evol. Equ.},
	volume={22},
	date={2022},
	number={1},
	pages={Paper No. 3, 34},
	issn={1424-3199},
	review={\MR{4386002}},
	doi={10.1007/s00028-022-00757-y},
}
\bib{ALT24}{article}{
   author={Addona, D.},
   author={Lorenzi, L.},
   author={Tessitore, G.},
   title={Young equations with singularities},
   journal={Nonlinear Anal.},
   volume={238},
   date={2024},
   pages={Paper No. 113401, 33},
   issn={0362-546X},
   review={\MR{4654792}},
   doi={10.1016/j.na.2023.113401},
}
\bib{CDFO13}{article}{
   author={Crisan, D.},
   author={Diehl, J.},
   author={Friz, P. K.},
   author={Oberhauser, H.},
   title={Robust filtering: correlated noise and multidimensional observation},
   journal={Ann. Appl. Probab.},
   volume={23},
   date={2013},
   number={5},
   pages={2139--2160},
   issn={1050-5164},
   review={\MR{3134732}},
   doi={10.1214/12-AAP896},
}
\bib{DGT12}{article}{
   author={Deya, A.},
   author={Gubinelli, M.},
   author={Tindel, S.},
   title={Non-linear rough heat equations},
   journal={Probab. Theory Related Fields},
   volume={153},
   date={2012},
   number={1-2},
   pages={97--147},
   issn={0178-8051},
   review={\MR{2925571}},
   doi={10.1007/s00440-011-0341-z},
}
\bib{DF12}{article}{
   author={Diehl, J.},
   author={Friz, P. K.},
   title={Backward stochastic differential equations with rough drivers},
   journal={Ann. Probab.},
   volume={40},
   date={2012},
   number={4},
   pages={1715--1758},
   issn={0091-1798},
   review={\MR{2978136}},
   doi={10.1214/11-AOP660},
}
\bib{DOR15}{article}{
   author={Diehl, J.},
   author={Oberhauser, H.},
   author={Riedel, S.},
   title={A L\'{e}vy area between Brownian motion and rough paths with
   applications to robust nonlinear filtering and rough partial differential
   equations},
   journal={Stochastic Process. Appl.},
   volume={125},
   date={2015},
   number={1},
   pages={161--181},
   issn={0304-4149},
   review={\MR{3274695}},
   doi={10.1016/j.spa.2014.08.005},
}
\bib{FHL21}{article}{
	author={Friz, P. K.},
	author={Hocquet, A.},
	author={L\^{e}, K.},
	title={Rough stochastic differential equations},
	date={2021},
	eprint={arXiv:2106.10340},
}
\bib{GH19}{article}{
   author={Gerasimovi\v{c}s, A.},
   author={Hairer, M.},
   title={H\"{o}rmander's theorem for semilinear SPDEs},
   journal={Electron. J. Probab.},
   volume={24},
   date={2019},
   pages={Paper No. 132, 56},
   review={\MR{4040992}},
   doi={10.1214/19-ejp387},
}
\bib{GHN21}{article}{
   author={Gerasimovi\v{c}s, A.},
   author={Hocquet, A.},
   author={Nilssen, T.},
   title={Non-autonomous rough semilinear PDEs and the multiplicative sewing
   lemma},
   journal={J. Funct. Anal.},
   volume={281},
   date={2021},
   number={10},
   pages={Paper No. 109200, 65},
   issn={0022-1236},
   review={\MR{4299812}},
   doi={10.1016/j.jfa.2021.109200},
}
\bib{Gub04}{article}{
	author={Gubinelli, M.},
	title={Controlling rough paths},
	journal={J. Funct. Anal.},
	volume={216},
	date={2004},
	number={1},
	pages={86--140},
	issn={0022-1236},
	review={\MR{2091358}},
	doi={10.1016/j.jfa.2004.01.002},
}
\bib{GLT06}{article}{
   author={Gubinelli, M.},
   author={Lejay, A.},
   author={Tindel, S.},
   title={Young integrals and SPDEs},
   journal={Potential Anal.},
   volume={25},
   date={2006},
   number={4},
   pages={307--326},
   issn={0926-2601},
   review={\MR{2255351}},
   doi={10.1007/s11118-006-9013-5},
}
\bib{GT10}{article}{
   author={Gubinelli, M.},
   author={Tindel, S.},
   title={Rough evolution equations},
   journal={Ann. Probab.},
   volume={38},
   date={2010},
   number={1},
   pages={1--75},
   issn={0091-1798},
   review={\MR{2599193}},
   doi={10.1214/08-AOP437},
}
\bib{HN19}{article}{
   author={Hesse, R.},
   author={Neam\c{t}u, A.},
   title={Local mild solutions for rough stochastic partial differential
   equations},
   journal={J. Differential Equations},
   volume={267},
   date={2019},
   number={11},
   pages={6480--6538},
   issn={0022-0396},
   review={\MR{4001062}},
   doi={10.1016/j.jde.2019.06.026},
}
\bib{HN20}{article}{
   author={Hesse, R.},
   author={Neam\c{t}u, A.},
   title={Global solutions and random dynamical systems for rough evolution
   equations},
   journal={Discrete Contin. Dyn. Syst. Ser. B},
   volume={25},
   date={2020},
   number={7},
   pages={2723--2748},
   issn={1531-3492},
   review={\MR{4097587}},
   doi={10.3934/dcdsb.2020029},
}
\bib{HesN22}{article}{
   author={Hesse, R.},
   author={Neam\c{t}u, A.},
   title={Global solutions for semilinear rough partial differential
   equations},
   journal={Stoch. Dyn.},
   volume={22},
   date={2022},
   number={2},
   pages={Paper No. 2240011, 18},
   issn={0219-4937},
   review={\MR{4431448}},
   doi={10.1142/S0219493722400111},
}
\bib{HocN22}{article}{
	author={Hocquet, A.},
	author={Neam\c{t}u, A.},
	title={Quasilinear rough evolution equations},
	date={2022},
	eprint={arXiv:2207.04787},
}
\bib{Le20}{article}{
   author={L\^{e}, K.},
   title={A stochastic sewing lemma and applications},
   journal={Electron. J. Probab.},
   volume={25},
   date={2020},
   pages={Paper No. 38, 55},
   review={\MR{4089788}},
   doi={10.1214/20-ejp442},
}
\bib{Le23}{article}{
   author={L\^{e}, K.},
   title={Stochastic sewing in Banach spaces},
   journal={Electron. J. Probab.},
   volume={28},
   date={2023},
   pages={Paper No. 26, 22},
   review={\MR{4546635}},
   doi={10.1214/23-ejp918},
}
\bib{LS22}{article}{
   author={Li, X.},
   author={Sieber, J.},
   title={Mild stochastic sewing lemma, SPDE in random environment, and
   fractional averaging},
   journal={Stoch. Dyn.},
   volume={22},
   date={2022},
   number={7},
   pages={Paper No. 2240025, 47},
   issn={0219-4937},
   review={\MR{4534216}},
   doi={10.1142/S0219493722400251},
}
\bib{LT23-1}{article}{
	author={Liang, J.},
	author={Tang, S.},
	title={Multidimensional Backward Stochastic Differential Equations with Rough Drifts},
	date={2023},
	eprint={arXiv:2301.12434},
}
\bib{LT23-2}{article}{
	author={Liang, J.},
	author={Tang, S.},
	title={Mild Solution of Semilinear SPDEs with Young Drifts},
	date={2023},
	eprint={arXiv:2309.06791},
}
\bib{Lun18}{book}{
   author={Lunardi, A.},
   title={Interpolation theory},
   series={Appunti. Scuola Normale Superiore di Pisa (Nuova Serie) [Lecture
   Notes. Scuola Normale Superiore di Pisa (New Series)]},
   volume={16},
   edition={3},
   publisher={Edizioni della Normale, Pisa},
   date={2018},
   pages={xiv+199},
   isbn={978-88-7642-639-1},
   isbn={978-88-7642-638-4},
   review={\MR{3753604}},
   doi={10.1007/978-88-7642-638-4},
}
\bib{Lyo94}{article}{
	author={Lyons, T. J.},
	title={Differential equations driven by rough signals. I. An extension of
	an inequality of L. C. Young},
	journal={Math. Res. Lett.},
	volume={1},
	date={1994},
	number={4},
	pages={451--464},
	issn={1073-2780},
	review={\MR{1302388}},
	doi={10.4310/MRL.1994.v1.n4.a5},
}
\bib{Lyo98}{article}{
	author={Lyons, T. J.},
	title={Differential equations driven by rough signals},
	journal={Rev. Mat. Iberoamericana},
	volume={14},
	date={1998},
	number={2},
	pages={215--310},
	issn={0213-2230},
	review={\MR{1654527}},
	doi={10.4171/RMI/240},
}
\bib{Yag10}{book}{
   author={Yagi, A.},
   title={Abstract parabolic evolution equations and their applications},
   series={Springer Monographs in Mathematics},
   publisher={Springer-Verlag, Berlin},
   date={2010},
   pages={xviii+581},
   isbn={978-3-642-04630-8},
   review={\MR{2573296}},
   doi={10.1007/978-3-642-04631-5},
}
\bib{You36}{article}{
	author={Young, L. C.},
	title={An inequality of Hölder type, connected with Stieljes integration},
	journal={Acta Math.},
	volume={67},
	date={1936},
	pages={251--282},
}
\end{biblist}
\end{document}